\numberwithin{equation}{section}
\definecolor{db}{RGB}{0, 0, 130}
\newcommand{\R}{\mathbb{R}}
\newcommand{\N}{\mathbb{N}}
\newcommand{\EE}{\mathbb{E}}
\newcommand{\bqn}{\begin{equation}}
\newcommand{\eqn}{\end{equation}}
\newcommand{\bqne}{\begin{equation*}}
\newcommand{\eqne}{\end{equation*}}
\newcommand{\argmin}{\operatorname{argmin}}
\newcommand{\hatB}{\widehat{B}}
\DeclareMathOperator{\Id}{Id}
\DeclareMathOperator{\Var}{Var}
\newcommand{\customlabel}[2]{%
   \protected@write \@auxout {}{\string\newlabel {#1}{{#2}{\thepage}{#2}{#1}{}}}%
   \hypertarget{#1}{#2\hspace{-0.14cm}}
}
\def\namedlabel#1#2{\begingroup
    #2%
    \def\@currentlabel{#2}%
    \phantomsection\label{#1}\endgroup
}
\newtheorem{definition}{Definition}[section]
\newtheorem{theorem}[definition]{Theorem}
\newtheorem{prop}[definition]{Proposition}
\newtheorem{lemma}[definition]{Lemma}
\newtheorem{proposition}[definition]{Proposition}
\newtheorem{remark}[definition]{Remark}
\author{El Mehdi Haress\footnote{Universit\'e Paris-Saclay, CentraleSup\'elec, MICS and CNRS FR-3487. \texttt{el-mehdi.haress@centralesupelec.fr}} \and 
Alexandre Richard\footnote{Universit\'e Paris-Saclay, CentraleSup\'elec, MICS and CNRS FR-3487. \texttt{alexandre.richard@centralesupelec.fr}
\newline 
 E.H. acknowledges the support of the Labex Math\'ematiques Hadamard. This work is supported by the SIMALIN project ANR-19-CE40-0016 from the French National Research Agency.
}}
\title{ \Large{\textbf{Estimation of several parameters in discretely-observed Stochastic Differential Equations with additive fractional noise}}}
\begin{document}

\maketitle

\begin{abstract}
We investigate the problem of joint statistical estimation of several parameters for a stochastic differential equation driven by an additive fractional Brownian motion. Based on discrete-time observations of the model, we construct an estimator of the Hurst parameter, the diffusion parameter and the drift, which lies in a parametrised family of coercive drift coefficients. Our procedure is based on the assumption that the stationary distribution of the SDE and of its increments permits to identify the parameters of the model. Under this assumption, we prove consistency results and derive a rate of convergence for the estimator. Finally, we show that the identifiability assumption is satisfied in the case of a family of fractional Ornstein-Uhlenbeck processes and illustrate our results with some numerical experiments.
\end{abstract}

\section{Introduction}\label{sec:intro}
Consider the following $\mathbb{R}^d$-valued stochastic differential equation
\begin{align}\label{eq:fsde0}
   Y_t = Y_0 + \int_0^t b_{\xi_0}(Y_s) d s + \sigma_0 B_t,
\end{align}
where $B$ is an $\mathbb{R}^d$-fractional Brownian motion (fBm) with Hurst parameter $H_0 \in (0,1)$. The goal in this work is to estimate simultaneously the parameter $\xi_{0}$, the diffusion coefficient $\sigma_{0}$ and the Hurst parameter $H_0$ from discrete observations of the process $Y$. We will assume that the drift parameter $\xi_0$ lies in a set $\Xi$ of $\mathbb{R}^m$ and $\{b_\xi(\cdot), \xi \in \Xi \}$ is a parametrised family of drift coefficients with $b_\xi(\cdot): \mathbb{R}^d \rightarrow \mathbb{R}^d$, and $\sigma_0$ is an invertible $\mathbb{R}^{d \times d}$ matrix. The unknown parameters are denoted by $\theta_0 = (\xi_{0},\sigma_{0},H_{0}) \in \mathbb{R}^{q+1}$, where $q=m+d^2$. 

In the framework of SDEs driven by fBm, many recent works have focused on the parametric estimation of the drift, mostly assuming that the process $Y$ is observed continuously and that the parameters $H$ and $\sigma$ are known (see e.g \cite{belfadli2011parameter,
hu2010parameter,rao2011statistical,tudor2007statistical,
hu2019parameter}). These works propose estimators of $\xi_{0}$ which are strongly consistent, providing a rate of convergence towards $\xi_{0}$ and even sometimes a central limit theorem \cite{hu2010parameter,hu2019parameter}. In these works, the drift function is of the form $b_\xi(y)=-\xi y$, i.e. a family of Ornstein-Uhlenbeck (OU) processes, or of the form $b_\xi(y)=\xi b(y)$ as in \cite{tudor2007statistical}. In addition, the process $Y$ is observed in continuous time. In practical situations though, we only have access to discrete-time observations. Taking into account this constraint, two recent papers \cite{panloup2020general,hu2013parameter} constructed estimators of $\xi_{0}$ which were proven to be strongly consistent. Their rate of convergence is studied and a central limit theorem is also proven in \cite{hu2013parameter}: while \cite{hu2013parameter} considers the fractional OU case, \cite{panloup2020general} treats general drift functions which satisfy a coercivity assumption.

The diffusion coefficient $\sigma_{0}$ is usually estimated using the quadratic variations of $Y$, which is possible only when the process is either observed continuously or the step-size goes to zero (i.e high frequency data), see \cite{xiao2011parameter} and \cite{berzin2015variance}. The Hurst parameter $H_{0}$ is also estimated using quadratic variations, see e.g.  \cite{kubilius2012rate}, or by a direct access to discrete observations of a fractional Brownian motion path with a step-size that goes to zero as in \cite{gloter2007estimation}.

When it comes to estimating all the parameters $(\xi_{0},\sigma_{0},H_{0})$, we refer to \cite{brouste2013parameter} where the observations are assumed to be made continuously, and \cite{haress2020estimation} which is, to the best of our knowledge, the only work which estimates all the parameters of a fractional Ornstein-Uhlenbeck process in a discrete-time setting. 

~

In this paper, we consider an ergodic setting that allows for \eqref{eq:fsde0} to have a stationary distribution for any $\theta_0 \in \Theta$.  We work with the assumption that \emph{the stationary distribution of $Y$ identifies the parameters}, as initiated in \cite{panloup2020general}. However, as illustrated by the authors of \cite{haress2020estimation}, in the simple case of a one-dimensional fractional OU process, this claim is false for more than one parameter to estimate. In fact, the stationary distribution of $Y$ is Gaussian and therefore distinguished by its mean (which does not depend on the parameters) and its variance. In this case, the variance itself cannot identify the three parameters. In \cite{haress2020estimation}, this issue is circumvented by considering the increments of $Y$; the increments of the stationary solution are also Gaussian but have different variances. Thus, adding two increments, the authors have access to three functions and show that these functions are sufficient to estimate the parameters. We propose here to generalise the approach presented in \cite{haress2020estimation}. We add $q$ linear transformations of the original process and assume that they are enough to identify the parameters. Therefore, our assumption (which is detailed later) will be that \emph{the stationary distribution of $Y$ and its increments identify the parameters $(\xi,\sigma,H)$.}
 
 Assume for simplicity that the observations are of the form $(Y_{kh}^{\theta_{0}})_{k=0,\dots,n+q}$ and consider $q$ linear transformations $\{ \ell^i (Y^{\theta_{0}}_{kh}, \dots, Y^{\theta_{0}}_{kh+ih} ) \}_{k=0,\dots,n}$ where $i \in \llbracket 1,q \rrbracket$. Hence, we now have access to $q+1$ paths, which we use to define the path of a higher-dimensional process $X^{\theta_{0}}$ that we call the  augmented process associated to the SDE \eqref{eq:fsde0}. With access to a path of $X^{\theta_{0}}$, we construct the estimator of $\theta_{0}$ by
 \begin{align}\label{eq:thetan-0}
 \hat{\theta}_n= \underset{\theta \in \Theta}{\argmin} \ d\left(\frac{1}{n} \sum_{k=0}^{n-1} \delta_{X_{kh}^{\theta_{0}}}, \mu_{\theta}\right),
 \end{align}
 where $\mu_{\theta}$ is the stationary distribution of $X^\theta$. We prove that $\hat{\theta}_n$ is a strongly consistent estimator of $\theta_0$ and obtain a rate of convergence. 

~

In \cite{haress2020estimation}, the authors provided numerical evidence of the identifiability assumption (i.e the fact that the stationary distribution of $Y$ and its increments identify the parameters). We prove here that in the setting of \cite{haress2020estimation}, i.e. of a fractional OU process, the aforementioned identifiability assumption holds. Also, as in \cite{panloup2020general}, we consider two variations of this assumption, a weak one which we will just call the identifiability assumption and a strong one. Moreover, to construct an estimator of the drift parameter $\xi$,  the authors of \cite{panloup2020general} proved beforehand results on the regularity of $Y$ with respect to $\xi$. This is a natural procedure, since the estimation method relies on minimizing a certain functional of $Y$, by showing that it has enough regularity so that its minimum is attained at the true parameter $\xi_0$. Here, in view of estimating all the parameters, we will will study the regularity of $Y$ with respect to $\xi, \sigma$ and $H$.

~

Since we are interested in ergodic estimators, we need the regularity of $Y$ in all the parameters to be uniform in time. In particular we need the regularity in $H$ to hold uniformly in $t\geq 0$. To achieve this, the drift will be assumed to be contractive. 

Let us mention that the sensitivity in the Hurst parameter has been studied in various situations and is an important topic in modeling. The fBm is known to be infinitely differentiable w.r.t its Hurst parameter (see \cite{Neuenkirch}). In addition, other functionals of the fBm were considered. In \cite{jolis2010continuity,jolis2007continuity}, the law of the integral w.r.t the fBm is proven to be continuous in $H$;  
in \cite{richard2015fractional}, the H\"older continuity in is obtained for generalised fractional Brownian fields; and in \cite{GiordanoJolisQS}, the law of stochastic heat and wave equations with additive fractional noise is proven to be continuous in $H$.
Let us also mention that in \cite{richard2016h}, the law of functionals of fractional SDEs is proven to be Lipschitz continuous around its Markovian counterpart ($H=\frac{1}{2}$), including irregular functionals such as the law of the first hitting time (see also \cite{RT2017} for a numerical approach and applications, in particular in neuroscience).

In this work, new results on the Hurst regularity of fractional models were needed, and they have been gathered in a separate paper \cite{HRarxiv}. \\

In the formula \eqref{eq:thetan-0}, the stationary distribution $\mu_{\theta}$ is generally unknown, except in some simple cases like for Ornstein-Uhlenbeck processes. This means that the estimator cannot be implemented. This problem can be solved by considering a numerical approximation of $\mu_{\theta}$ via an Euler scheme $Y^{\theta, \gamma}$ of time-step $\gamma >0$. Given $N+q$ simulated points of the form $(Y^{\theta,\gamma}_{k \gamma} )_{k=0,\dots, N+q}$, we consider as before $q$ linear transformations $\{ \ell^i ( Y^{\theta,\gamma}_{k \gamma}, \dots, Y^{\theta,\gamma}_{k \gamma + i \gamma}) \}_{k=0, \dots, N}$, which we use to define a higher-dimensional process $X^{\theta, \gamma}$. We then define the estimator of $\theta_0$ by

\begin{align}\label{eq:thetan-02}
\hat{\theta}_{N,n,\gamma} = \underset{\theta \in \Theta}{\argmin} \ d\left(\frac{1}{n} \sum_{k=0}^{n-1} \delta_{X_{kh}^{\theta_{0}}}, \frac{1}{N} \sum_{k=0}^{N-1} \delta_{X^{\theta,\gamma}_{k \gamma}}  \right).
\end{align}
We prove that $\hat{\theta}_{N,n,\gamma} $ is a strongly consistent estimator of $\theta_0$ and obtain a rate of convergence. 

\paragraph{Organisation of the paper.} In Section \ref{sec:proced}, we first detail the notations and some assumptions in Section \ref{subsec:not-assump}. Then in Section \ref{subsec:inv_est} we explain how to approximate the invariant measure via an Euler scheme in order to implement the estimator \eqref{eq:thetan-02}. We present the main results for the estimators \eqref{eq:thetan-0} and \eqref{eq:thetan-02} in Section \ref{subsec:results}. In Section \ref{sec:app-OU}, we show that our results can be applied to the fractional Ornstein-Uhlenbeck process and to small perturbations of this process. In Section \ref{sec:consist}, we prove the strong consistency of the estimator \eqref{eq:thetan-0} and its rate of convergence. In Section \ref{sec:practical}, we prove the strong consistency of the estimator \eqref{eq:thetan-02} and its rate of convergence. In Section \ref{subsec:simplif} and \ref{sec:Hknown-or-xiknown}, we prove that the identifiability assumption holds in the case of a fractional Ornstein-Uhlenbeck process for the estimation of two parameters, and in Section \ref{sec:OU-perturbation}, we exhibit a more general family of SDEs that verifies a stronger identifiability assumption for the estimation of one parameter. We also implement our method and run numerical simulations in Section \ref{sec:numerical}. In the Appendix~\ref{app:regH}, we recall some results from our companion paper \cite{HRarxiv}. In Appendix~\ref{appendix-L}, we prove continuity and tightness results on $Y$ and the solution of the Euler scheme associated to \eqref{eq:fsde0}. Finally Appendix~\ref{subsec:proofof4.1} is dedicated to the proof of Proposition~\ref{prop:dconv-euler}.

\section{A general procedure}\label{sec:proced}

We first give some general notations. Then we state the assumptions on the coefficients of \eqref{eq:fsde0} and define the estimator. At the end of this section, we give an almost sure convergence for this estimator result as well as a convergence rate.

\subsection{Notation and assumptions}\label{subsec:not-assump}
\paragraph{Notations.} Let $\mathcal{M}_1(\mathbb{R}^d)$ denote the set of probability measures on $\mathbb{R}^d$. For any given $p$, we will consider the $p$-Wasserstein distance, which is defined for every $\mu,\nu$ in $\mathcal{M}_1(\mathbb{R}^d) $ as follows:
\begin{align*}%
    \mathcal{W}_p (\mu, \nu) = \inf\{\left(\EE| X - Y |^p\right)^{\frac{1}{p}}; \mathcal{L}(X)= \mu, ~ \mathcal{L}(Y) = \nu \} .
\end{align*}
We denote by $\mathcal{D}_p$ the set of distances dominated by the $p$-Wasserstein distance. As in \cite{panloup2020general}, we will also work with the distance $d_{CF,p} \in \mathcal{D}_1$ defined for $p>( \frac{d}{2} \vee 1)$ as
\begin{align}\label{eq:d_cf}
    d_{CF,p}(\mathcal{L}(X),\mathcal{L}(Y)) = \left(\int_{\R^d} (\EE[e^{i\langle \chi, X \rangle}] - \EE[e^{i\langle \chi, Y \rangle}])^2 g_p(\chi) d \chi\right)^{1/2}  ,
\end{align}
where $g_p$ is the integrable kernel given by
\begin{align}\label{eq:gp}
g_p(\chi)=c_p (1+| \chi |^2)^{-p}  ,
\end{align} 
and $c_p = (\int_{\mathbb{R}^d} (1+| \chi |^2)^{-p} d\chi )^{-1}$ is a normalizing constant. 

We denote by $\N^*$ the set $\N \backslash \{0 \}$ and by $C$ a constant that can change from line to line and that does not depend on time and the parameters $\xi,\sigma,H$. When we want to make the dependence of $C$ on some other parameter $a$ explicit, we will write $C_a$.

The $\R^d$-fBm will be denoted by $B$, or by $B^H$ if we need to emphasize on the Hurst parameter $H$ of the process. Whenever we compare, on the same probability space, two fBm with different Hurst parameters $B^{H_{1}}$ and $B^{H_{2}}$, it is assumed that they are built from the same Brownian motion $W$ by the Mandelbrot-Van Ness formula:
\begin{equation}\label{eq:MVN}
B^{H_{i}}_{t} = \frac{1}{\Gamma(H_{i}+\frac{1}{2})} \int_{\R} \left((t-s)_{+}^{H_{i}-\frac{1}{2}} - (-s)_{+}^{H_{i}-\frac{1}{2}}\right) dW_{s}, \quad t\geq 0,\ i=1,2.
\end{equation}

\paragraph{Assumptions.} First, we assume that the number of unknown parameters $q+1$ is such that $q \ge 1$ (we have at least two unknowns), which is decomposed into $m$ parameters for the drift $b_{\xi_0}$, $ \xi_0 \in \Xi \subset \mathbb{R}^m$, $d^2$ parameters for $\sigma \in \mathbb{R}^{d \times d}$ and the last one which is the Hurst parameter. The next assumption states the compactness of the spaces where the parameters lie.

\begin{itemize}
\item[\namedlabel{asmp:compact}{{$\mathbf{A_0}$}}.] $\Xi$ is compactly embedded in $\mathbb{R}^m$ for a given $m \ge 1$. $H_0$ belongs to $\mathcal{H}$, a compact subset of $(0,1)$. The diffusion matrix $\sigma_0$ belongs to $\Sigma$ a compact set of $d\times d$-invertible matrices.
\end{itemize}
Therefore, we have that $\Theta = \Xi \times \Sigma \times \mathcal{H}$ is a compact subset of $\mathbb{R}^{q+1}$. We will also assume a coercivity assumption on the drift $b$.

\begin{itemize}
\item[\namedlabel{asmp:drift}{{$\mathbf{A_1}$}}.] $b \in \mathcal{C}^{1,1}(\mathbb{R}^d \times \Xi;\mathbb{R}^d)$ and there exist constants $\beta,K, c > 0$ and $r \in \mathbb{N}$ such that\\
(i) For every $x,y \in \mathbb{R}^d$ and $\xi \in \Xi$, we have
\begin{align}\label{eq:drift-coerciv}
    \langle b_\xi(x) - b_\xi(y),  x-y \rangle \leq -\beta | x - y |^2 ~\text{and}~ | b_\xi(x) - b_\xi(y) | \leq K | x-y |.
\end{align} 
(ii) For every $x \in \mathbb{R}^d$ and $\xi_1, \xi_2 \in \Xi$, the following growth bound is satisfied:
\begin{align}\label{eq:drift-growth}
    |  b_{\xi_1}(x)- b_{\xi_2}(x) | \leq c (1+| x |^r).
\end{align}
\end{itemize}
For $\theta = (\xi,\sigma,H) \in \Theta$, we denote by $Y^{\theta}$ the unique solution of the following equation
\begin{align}\label{eq:fsde}
        Y^\theta_t = Y_0 + \int_0^t b_{\xi}(Y^\theta_s) d s + \sigma B_t ,
\end{align}
where $Y_{0}\in \R^d$ and $B$ is an fBm of Hurst parameter $H$. Under \ref{asmp:drift}, \cite{Hairer} (see also \cite[Remark 2.4]{panloup2020general} and the references therein) gives the existence and uniqueness of the invariant measure to \eqref{eq:fsde}. 
We denote by $\bar{Y}^{\theta}$ the unique stationary solution and by $\nu_{\theta}$ its marginal distribution. For each $i \in \llbracket 1,q \rrbracket$, let $\ell^i$ be a linear transformation from $\left( \mathbb{R}^{d} \right)^{i+1}$ to $\mathbb{R}^d$.

Let us define the following processes for all $i \in \llbracket 1, q \rrbracket $:
\begin{align}%
    Z^{i,\theta}_. & = \ell^i(Y^{\theta}_\cdot,\dots,Y^{\theta}_{\cdot+ih}  ) \nonumber \\
    \bar{Z}^{i,\theta}_\cdot & = \ell^i(\bar{Y}^{\theta}_\cdot,\dots,\bar{Y}^{\theta}_{\cdot+ih}) \nonumber \\
    \label{eq:defX}
    X^{\theta}_\cdot & = (Y^{\theta}_\cdot, Z^{1,\theta}_\cdot,\dots,Z^{q,\theta}_\cdot)\\
    \label{eq:defXbar}
    \bar{X}^{\theta}_.\cdot& = (\bar{Y}^{\theta}_\cdot, \bar{Z}^{1,\theta}_\cdot,\dots,\bar{Z}^{q,\theta}_\cdot) .
\end{align}
Typical linear transformations considered in applications (see the discussion in Section~\ref{sec:app-OU}) will be the simple increments
\begin{align}\label{eq:simpleinc}
\ell^i(U^{\theta_0}_\cdot,\dots,U_{.+ih}^{\theta_0}) = U_{.+ih}^{\theta_0}-U_\cdot^{\theta_0}, \ \ \ i=1,\dots,q.
\end{align}

Observe that for all $\theta \in \Theta$ and $i \in \llbracket 1, q \rrbracket$, the processes $\bar{Z}^{i,\theta}$ and $\bar{X}^{\theta}$ are stationary. Denote by $\mu_{\theta}$ the law of $\bar{X}^{\theta}$. For simplicity, we  will not write the parameter $\theta$ on the processes when $\theta$  is the true parameter $\theta_0$. The triangle inequality yields the following inequalities for all $\theta, \theta' \in \Theta$ and $p>0$,
\begin{align}\label{eq:boundsXtoY}
| X^\theta_\cdot |^p & \leq C_{p,q} \Biggl( \sum_{i=0}^{q} |Y^\theta_{\cdot+ih} |^p \Biggr)\nonumber \\
| X^\theta_\cdot - X^{\theta'}_\cdot|^p & \leq C_{p,q} \Biggl( \sum_{i=0}^{q} |Y^\theta_{\cdot+ih} - Y^{\theta'}_{\cdot+ih} |^p \Biggr) \\
| X^\theta_\cdot - \bar{X}^{\theta}_\cdot|^p & \leq C_{p,q} \Biggl( \sum_{i=0}^{q} |Y^\theta_{\cdot+ih} - \bar{Y}^{\theta}_{\cdot+ih} |^p \Biggr)  , \nonumber
\end{align}
where $C_{p,q}$ is a constant that  do not depend on $\theta$ or $\theta'$. More precisely, $C_{q,p} \sim 2^p q^p L^p$, where $L$ is the maximum of the Lipschitz constants of the applications $\ell^i$ for $i \in \llbracket 1,q \rrbracket$. This means that upper bounds on $X$ will be obtained by bounding $Y$, and the regularity of the process $X$ will be studied through the regularity of the process $Y$.

As was highlighted previously in the introduction, the estimators are defined by assuming that $\mu_{\theta}$ characterizes $\theta$. This weak identifiability hypothesis reads as follows:
\begin{itemize}
\item[\namedlabel{asmp:identif}{{$\mathbf{I_w}$}}.] For any $\theta$ in $\Theta$,
\begin{align}\label{eq:identif-cond}
    \mu_{\theta}=\mu_{\theta_0} \Longleftrightarrow \theta=\theta_0,
\end{align}
where we recall that $\mu_\theta$ is the stationary distribution of $\bar{X}^\theta$.
\end{itemize}
\begin{remark}
A similar assumption is considered in \cite{panloup2020general} based on the stationary distribution of $\bar{Y}$: assume that $\nu_{\theta}=\nu_{\theta_0} \text{ iff } \theta=\theta_0$. Assumption~\ref{asmp:identif} is weaker, in the sense that it is satisfied in situations where the assumption from \cite{panloup2020general} is not (because we consider the process $X^\theta$ instead of $Y^\theta$). Indeed, let $\theta,\theta_0 \in \Theta$ such that $\mu_{\theta}=\mu_{\theta_0}$. This implies $d_{CF,p}(\mu_\theta,\mu_{\theta_0})=0$. Using the definition of $d_{CF,p}$, we have
\begin{align*}
    \text{for almost all}~ \chi_{q} \in \mathbb{R}^{(q+1)d}, \ \mathbb{E}\left[e^{i\langle \chi_{q}, \bar{X}^{\theta}_{t} \rangle}\right] = \mathbb{E}\left[e^{i\langle \chi_{q}, \bar{X}^{\theta_0}_{t} \rangle}\right],
\end{align*}
which implies that
\begin{align*}
    \text{for almost all}~ \chi \in \mathbb{R}^{d}, \ \mathbb{E}\left[e^{i\langle \chi, \bar{Y}^{\theta}_{t} \rangle}\right] = \mathbb{E}\left[e^{i\langle \chi, \bar{Y}^{\theta_0}_{t} \rangle}\right].
\end{align*}
Hence, if the assumption from \cite{panloup2020general} holds, one gets $d_{CF,p}(\nu_\theta,\nu_{\theta_0})=0$, i.e. $\nu_\theta=\nu_{\theta_0}$, which then implies that $\theta=\theta_0$. To see that \ref{asmp:identif} is strictly weaker, we refer the reader to the example of the fractional OU process detailed in Section~\ref{sec:app-OU}: in dimension $1$, the stationary measure is centred Gaussian and the variance, which depends on the drift coefficient, the diffusion coefficient and $H$, is not sufficient to identify all $3$ parameters. However, considering the stationary measure of $Y^\theta$ and its increments permit to retrieve identifiability, see Proposition~\ref{prop:identifOU}.
\end{remark}

\subsection{Approximation of the invariant measure $\mu_\theta$}\label{subsec:inv_est}
To approximate $\mu_{\theta}$, we consider the Euler scheme of the stochastic process $Y^{\theta}$, solution to \eqref{eq:fsde}. For a time-step $\gamma > 0$, the Euler scheme $Y^{\theta,\gamma}$ is then  defined by $Y^{\theta,\gamma}_0 = y_0 \in \mathbb{R}^d$ and
\begin{equation}\label{eq:euler-scheme}
\begin{split}
   Y^{\theta,\gamma}_{(k+1) \gamma } & = Y^{\theta,\gamma}_{k \gamma } + \gamma b_{\xi}( Y^{\theta,\gamma}_{k \gamma }) + \sigma (\hatB_{(k+1)\gamma} - \hatB_{k \gamma} )  \\
   Y_t^{\theta,\gamma} & = Y^{\theta,\gamma}_{t_\gamma} = Y^{\theta,\gamma}_{k \gamma} ~ \text{for}~ t \in [k \gamma, (k+1) \gamma ) ,
\end{split}
\end{equation}
where $t_\gamma= \gamma \lfloor t/\gamma \rfloor$ and $\hatB$ is a simulated fractional Brownian motion, which is \emph{a priori} different from the process $B$ in \eqref{eq:fsde}, since $B$ is unobserved. In practice, this means that we will not be able to compare pathwise the observed process and the simulated one. When necessary, to mark the dependence of $Y^{\theta,\gamma}_\cdot$ on $\hatB$, we write $Y^{\theta,\gamma}_\cdot(\hatB)$. 
We will say that $\left(\bar{Y}^{\theta,\gamma}_t\right)_{t \geq 0}$ is a discrete stationary solution to \eqref{eq:euler-scheme} if it is a solution of \eqref{eq:euler-scheme} satisfying
$$
\left(\bar{Y}^{\theta,\gamma}_{t_1+k \gamma}, \ldots, \bar{Y}^{\theta,\gamma}_{t_n+k \gamma}\right) \stackrel{\mathcal{L}}{=}\left(\bar{Y}^{\theta,\gamma}_{t_1}, \ldots, \bar{Y}^{\theta,\gamma}_{t_n}\right) \quad \forall\, 0<t_1<\cdots<t_n, \, \forall n, k \in \mathbb{N} .
$$
By \cite[Proposition 3.4]{panloup2020general}, there exists $\gamma_0 > 0$ such that for any $\gamma \in(0, \gamma_0]$ and $\theta \in \Theta$, \eqref{eq:euler-scheme} admits a unique stationary solution $\bar{Y}^{\theta,\gamma}$. As in Section \ref{subsec:not-assump}, we define the augmented Euler scheme $X^{\theta,\gamma}$ by
\begin{align}\label{eq:Xthetagamma}
    X^{\theta,\gamma}_\cdot = \left(Y^{\theta,\gamma}_\cdot, \ell^{1} (Y^{\theta,\gamma}_\cdot, Y^{\theta,\gamma}_{\cdot+h}),\dots,\ell^q( Y^{\theta,\gamma}_\cdot, \dots, Y^{\theta,\gamma}_{\cdot+qh})\right) .
\end{align}
Similarly, we write $X^{\theta,\gamma}_\cdot(\hatB)$ to insist on the dependence on $\hatB$ when necessary. We also define the stationary augmented Euler scheme $\bar{X}^{\theta,\gamma}$ and denote its distribution by $\mu_{\theta}^\gamma$

\subsection{Main results}\label{subsec:results}

Assume that the solution $Y$ is discretely observed at times $\{kh; k=1,\dots,n+q\}$ for a fixed time step $h>0$. Under Assumption \ref{asmp:drift}, we have the following lemma (the proof is postponed to Section \ref{subsec:contrast_conv}):
\begin{lemma}\label{lem:contrast-conv}
For any $d \in \mathcal{D}_2$ and any $\theta \in \Theta$, we have
\begin{align*}
d\left( \frac{1}{t} \int_{0}^{t} \delta_{X_{s}^\theta} ds ,\mu_{\theta} \right) \underset{t \rightarrow+\infty}{\longrightarrow} 0 \quad a . s. \, ,
\end{align*}
and
\begin{align*}
d\left( \frac{1}{n} \sum_{k=0}^{n-1} \delta_{X_{kh}^\theta}, \mu_{\theta}\right) \underset{n \rightarrow+\infty}{\longrightarrow} 0 \quad a . s .
\end{align*}
\end{lemma}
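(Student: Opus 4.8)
The plan is to reduce the statement to two ingredients: a classical ergodic theorem for the \emph{stationary} augmented process $\bar{X}^\theta$, and a pathwise contraction estimate that lets us transfer the conclusion from $\bar{X}^\theta$ to the non-stationary process $X^\theta$ started at $Y_0$. Throughout, I couple $Y^\theta$ and $\bar{Y}^\theta$ on the same probability space by driving both with the same fBm $B$, so that $\bar{X}^\theta$ becomes a deterministic (linear, time-shifted) functional of the stationary process $\bar{Y}^\theta$, and so that $X^\theta$ and $\bar{X}^\theta$ are naturally coupled. Since $d \in \mathcal{D}_2$ is dominated by $\mathcal{W}_2$, it suffices to prove both convergences in $\mathcal{W}_2$.

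\emph{Step 1 (pathwise contraction).} Set $D_t = Y^\theta_t - \bar{Y}^\theta_t$. The noise cancels, so $t \mapsto |D_t|^2$ is absolutely continuous and, by the coercivity in \ref{asmp:drift}(i), $\frac{d}{dt}|D_t|^2 = 2\langle b_\xi(Y^\theta_t) - b_\xi(\bar{Y}^\theta_t), D_t \rangle \leq -2\beta |D_t|^2$. Grönwall's lemma then gives $|D_t| \leq e^{-\beta t}\,|Y_0 - \bar{Y}^\theta_0|$ for all $t \geq 0$. Feeding this into the third inequality of \eqref{eq:boundsXtoY} yields $|X^\theta_{kh} - \bar{X}^\theta_{kh}|^2 \leq C_{2,q}\,|Y_0 - \bar{Y}^\theta_0|^2 \sum_{i=0}^q e^{-2\beta(k+i)h}$, whose right-hand side is summable in $k$. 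Dividing the Cesàro sum by $n$ (and, in continuous time, dividing $\int_0^t |X^\theta_s - \bar{X}^\theta_s|^2\,ds$ by $t$, using $\int_0^\infty e^{-2\beta s}\,ds < \infty$) gives $\frac1n \sum_{k=0}^{n-1} |X^\theta_{kh} - \bar{X}^\theta_{kh}|^2 \to 0$ and $\frac1t \int_0^t |X^\theta_s - \bar{X}^\theta_s|^2\,ds \to 0$ almost surely.

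\emph{Step 2 (stationary ergodic theorem in $\mathcal{W}_2$).} Uniqueness of the invariant measure of \eqref{eq:fsde} (Hairer) makes the stationary solution $\bar{Y}^\theta$, hence $\bar{X}^\theta$, ergodic, and in the mixing setting this also yields ergodicity of the time-$h$ shift governing $(\bar{X}^\theta_{kh})_k$. Applying Birkhoff's theorem to a countable convergence-determining family of bounded Lipschitz functions gives, on a single full-measure event, weak convergence of $\frac1n \sum_{k=0}^{n-1} \delta_{\bar{X}^\theta_{kh}}$ and of $\frac1t \int_0^t \delta_{\bar{X}^\theta_s}\,ds$ to $\mu_\theta$. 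Using the uniform second-moment bounds on $Y^\theta$ (so that $\int |x|^2\,d\mu_\theta < \infty$), a further application of Birkhoff to the observable $|\cdot|^2 \in L^1(\mu_\theta)$ gives convergence of the second moments; since weak convergence together with convergence of the $2$-moments is equivalent to convergence in $\mathcal{W}_2$, I obtain $\mathcal{W}_2\!\left(\frac1n \sum_{k=0}^{n-1}\delta_{\bar{X}^\theta_{kh}},\mu_\theta\right) \to 0$ and the analogous continuous-time statement.

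\emph{Step 3 (conclusion) and main obstacle.} The diagonal coupling of two empirical measures gives $\mathcal{W}_2^2\!\left(\frac1n\sum_{k=0}^{n-1}\delta_{X^\theta_{kh}}, \frac1n\sum_{k=0}^{n-1}\delta_{\bar{X}^\theta_{kh}}\right) \leq \frac1n\sum_{k=0}^{n-1}|X^\theta_{kh}-\bar{X}^\theta_{kh}|^2$, which tends to $0$ by Step 1, and the same holds in continuous time; the triangle inequality for $\mathcal{W}_2$ then combines this with Step 2 to finish the proof. The main obstacle is Step 2: because $\bar{Y}^\theta$ is \emph{non-Markovian}, both its ergodicity and the ergodicity of its discrete time-$h$ skeleton must be imported from the ergodic theory of fBm-driven SDEs rather than established directly, and the upgrade from weak convergence to $\mathcal{W}_2$ convergence rests essentially on uniform second-moment control of the stationary solution. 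By contrast, the transfer from the stationary to the non-stationary occupation measures (Steps 1 and 3) is elementary once the pathwise contraction from \ref{asmp:drift}(i) is in hand.
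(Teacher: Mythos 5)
Your proof is correct in outline, but it follows a genuinely different route from the paper's. The paper (Section \ref{subsec:contrast_conv}) works directly with the non-stationary process: it proves a.s.\ convergence in law of the occupation measures by establishing tightness of the path-space measures $\pi_t^{\theta}=\frac{1}{t}\int_0^t \delta_{X^\theta_{s+\cdot}}\,ds$ (Billingsley's criterion \cite{Billingsley} plus moment estimates), then identifying every limit point as the law of $\bar{X}^{\theta}$ by applying the ergodic theorem for fBm increments \cite{cohen2011approximation} to the noise-extraction functional $G$; finally it upgrades from the Prokhorov metric to any $d\in\mathcal{D}_2$ via the comparison of $\mathcal{W}_2$ with the Prokhorov distance \cite{gibbs2002choosing} together with the uniform moment bounds of Proposition~\ref{prop:uniform-Lq-bounds}. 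You instead reduce to the stationary process through the pathwise contraction (your Step 1 is the same computation as in Lemma~\ref{prop:general-results}) and a diagonal coupling, and then invoke Birkhoff's theorem for $\bar{X}^{\theta}$, upgrading weak to $\mathcal{W}_2$ convergence via convergence of second moments. What your route buys is the complete avoidance of the path-space tightness machinery; what it costs is that the entire weight now rests on the ergodicity of the stationary solution and, crucially, of its time-$h$ skeleton, which you import as a black box. Be careful there: uniqueness of the invariant measure \cite{Hairer} yields (via extremality, in the appropriate non-Markovian framework) ergodicity of the continuous-time shift, but flow ergodicity alone does \emph{not} imply ergodicity of a fixed-step skeleton, so the mixing property you allude to is genuinely needed and should be justified. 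The clean way to close this under \ref{asmp:drift} is to observe that the contraction makes $\bar{Y}^{\theta}_t$ the a.s.\ pullback limit of solutions started in the far past, hence a shift-equivariant measurable functional of the increments of $B$; since those increments form a stationary Gaussian process whose covariance tends to zero, the increment system is mixing, and mixing passes to factors and to every discrete skeleton. With that point made explicit your argument is complete, and it ultimately rests on the same underlying input (ergodicity of fBm increments) that drives the paper's identification step.
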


\begin{remark}
The integral $\int_0^t \delta_{X_s} ds$ is to be understood as the probability measure which associates to each Borel set $A$ the value $\int_0^t \delta_{X_s}(A) ds$.
\end{remark}

Hence, for some observations $X_0^{\theta_0}, \dots, X_{(n-1)h}^{\theta_0}$ and under the identifiability assumption \ref{asmp:identif}, the previous lemma justifies to use the estimator $\hat{\theta}_n$ defined in \eqref{eq:thetan-0}.

In practice, we want to implement the estimator $\hat{\theta}_{n,N,\gamma}$ defined in \eqref{eq:thetan-02}. The following result, coupled with Proposition \ref{prop:conv_contrast}$(i)$ justifies the use of this estimator.

\begin{prop}\label{prop:dconv-euler}
Let $( X^{\theta,\gamma}_{k \gamma} )_{k \geq 0} $ be the augmented Euler scheme with time-step $\gamma$. Assume that \ref{asmp:compact} and \ref{asmp:drift} hold. Then for any distance $d \in \mathcal{D}_2$, there exists $\gamma_0>0$ such that for all $\theta \in \Theta$ and $\gamma \in\left(0, \gamma_0\right]$, we have
\begin{align*}
\lim _{N \rightarrow \infty} d\left( \frac{1}{N} \sum_{k=0}^{N-1} \delta_{X^{\theta,\gamma}_{k \gamma}}, \mu_{\theta}^\gamma \right) = 0 .
\end{align*}
\end{prop}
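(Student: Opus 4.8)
The statement is the discrete-time analogue of Lemma~\ref{lem:contrast-conv}, but now for the Euler scheme $X^{\theta,\gamma}$ rather than the true process $X^{\theta}$. The plan is to follow the same two-step scheme used in Section~\ref{subsec:contrast_conv}: first establish convergence in law (equivalently, in the Prokhorov distance $d_P$) of the empirical measure $\frac{1}{N}\sum_{k=0}^{N-1}\delta_{X^{\theta,\gamma}_{k\gamma}}$ towards $\mu_\theta^\gamma$, and then upgrade this to convergence for any $d \in \mathcal{D}_2$ by controlling second moments. For the second step I would invoke the same comparison inequality from \cite[Theorem 2]{gibbs2002choosing} that dominates $\mathcal{W}_2$ by $d_P$ up to a factor involving $\max\bigl(\frac{1}{N}\sum_{k=0}^{N-1}|X^{\theta,\gamma}_{k\gamma}|^2,\, \mathbb{E}|\bar{X}^{\theta,\gamma}_{k\gamma}|^2\bigr)+1$, then use \eqref{eq:boundsXtoY} to reduce everything to $Y^{\theta,\gamma}$ and appeal to the uniform $L^q$-bounds on the Euler scheme (the discrete analogue of Proposition~\ref{prop:uniform-Lq-bounds}) to see that this prefactor stays almost surely bounded in $N$. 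The convergence in Prokhorov distance then forces convergence in $\mathcal{W}_2$, hence in every $d \in \mathcal{D}_2$.

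For the convergence-in-law step I would again argue by tightness plus identification of the limit, mirroring the continuous-time proof and \cite[Proposition 3.3]{panloup2020general}. The natural object is the family of shifted empirical measures $\pi_N^{\theta,\gamma} = \frac{1}{N}\sum_{k=0}^{N-1}\delta_{X^{\theta,\gamma}_{k\gamma+\cdot}}$ on path space, whose tightness can be checked through a moment criterion of the form
\begin{align*}
\limsup_{N\to\infty} \frac{1}{N}\sum_{k=0}^{N-1} \sup_{u\in[t_0,t_0+\delta]}\bigl|X^{\theta,\gamma}_{k\gamma+u}-X^{\theta,\gamma}_{k\gamma+t_0}\bigr|^r \leq C_{r,T}\,\delta^{1+\rho},
\end{align*}
which by \eqref{eq:boundsXtoY} reduces to the corresponding increment bound for the Euler scheme $Y^{\theta,\gamma}$. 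The required increment estimate is the discrete counterpart of \cite[Eq.~A.19]{panloup2020general}; here the noise increments $\hatB_{(k+1)\gamma}-\hatB_{k\gamma}$ of the simulated fBm play the role of the fBm increments, and the contractivity from \ref{asmp:drift} controls the drift contribution uniformly. To identify the limit, one shows that any limit point $\gamma$ is stationary (a telescoping change-of-variables argument as in the continuous case) and that, after applying the map $G$ that strips off the drift integrals, its pushforward is the law of the driving object $\mathbf{B}$; this rests on the ergodicity of the increments of the simulated fractional Brownian motion.

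The main obstacle, and the reason this is relegated to Appendix~\ref{subsec:proofof4.1} rather than folded into Lemma~\ref{lem:contrast-conv}, is the discretisation. The ergodic theorem for fBm increments used in Section~\ref{subsec:contrast_conv} is a continuous-time statement, so one must instead invoke an ergodicity result along the deterministic arithmetic grid $\{k\gamma\}_{k\ge 0}$ for the stationary increments of $\hatB$; the shift $\theta_\gamma$ now acts on a discrete-time stationary sequence, and the functional $G$ must be reworked so that the drift-integral corrections $\int_0^{k\gamma+ih} b_\xi(\cdot)$ are replaced by their piecewise-constant Euler analogues $\gamma\sum_{j} b_\xi(Y^{\theta,\gamma}_{j\gamma})$. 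Verifying that $G$ remains continuous for the relevant topology and that the finite-dimensional convergence goes through exactly as in the continuous case is where the bookkeeping is delicate, but no genuinely new analytic input beyond the uniform moment bounds and the discrete-grid ergodicity is needed.
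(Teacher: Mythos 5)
There is a genuine gap in your tightness step. Your plan transports the continuous-time argument of Section~\ref{subsec:contrast_conv} verbatim: you lift to the shifted empirical measures $\pi_N^{\theta,\gamma}=\frac{1}{N}\sum_{k=0}^{N-1}\delta_{X^{\theta,\gamma}_{k\gamma+\cdot}}$ and propose to verify the modulus-of-continuity criterion
\begin{align*}
\limsup_{N\to\infty} \frac{1}{N}\sum_{k=0}^{N-1} \sup_{u\in[t_0,t_0+\delta]}\bigl|X^{\theta,\gamma}_{k\gamma+u}-X^{\theta,\gamma}_{k\gamma+t_0}\bigr|^r \leq C_{r,T}\,\delta^{1+\rho}.
\end{align*}
That criterion (the one from \cite[Corollary p.83]{Billingsley} used in Section~\ref{subsec:contrast_conv}) is a criterion for tightness of laws on the space of \emph{continuous} paths, and it cannot hold here: by \eqref{eq:euler-scheme} the Euler scheme is piecewise constant, so its paths are c\`adl\`ag step functions with jumps at the grid points $j\gamma$. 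Whenever the window $[t_0,t_0+\delta]$ contains a point of $\gamma\mathbb{N}$ (and since the shifts $k\gamma$ are grid-aligned, this happens for every $k$ simultaneously or for none), the supremum above equals the magnitude of the corresponding jump, namely $|\gamma\, b_\xi(Y^{\theta,\gamma}_{j\gamma})+\sigma(\hatB_{(j+1)\gamma}-\hatB_{j\gamma})|$ up to the contribution of the augmented coordinates, which is of order $\gamma^{H}$ and in particular does \emph{not} tend to $0$ as $\delta\to0$. So no bound of the form $C\delta^{1+\rho}$ with $\rho>0$ is available, and indeed no such bound should exist: empirical measures supported on discontinuous paths cannot be tight on $\mathcal{C}([0,\infty),\mathbb{R}^{d(q+1)})$.

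The paper's proof (Appendix~\ref{subsec:proofof4.1}) avoids this by working in the Skorokhod space $D(\mathbb{R}_+,\mathbb{R}^{d(q+1)})$ and invoking \cite[Theorem 13.2]{Billingsley}, whose two conditions are (i) a uniform control of $\sup_{t\in[0,T]}|x_t|$ under $\pi_N$ and (ii) a smallness condition on the c\`adl\`ag modulus $w'_T(x,\delta)$, which allows finitely many jumps. Condition (ii) is then \emph{trivial} for the Euler scheme: since jumps occur only at multiples of $\gamma$, one has $w'_T(X^{\theta,\gamma},\delta)=0$ as soon as $\delta<\gamma$. The whole burden falls on condition (i), which is reduced via \eqref{eq:boundsXtoY} to an almost-sure uniform-in-$N$ bound on $\frac{1}{N}\sum_{k}\delta_{\{\sup_{t\in[0,T+qh]}|Y^{\theta,\gamma}_{k\gamma+t}|\}}$ applied to $V(x)=|x|^2$, obtained from the last equation in the proof of \cite[Proposition 2]{cohen2011approximation}. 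Your remaining ingredients (the Gibbs--Su domination of $\mathcal{W}_2$ by the Prokhorov distance with a moment prefactor controlled by Proposition~\ref{prop:uniform-Lq-bounds-L}, the discrete stationarity/telescoping argument, and the identification of the limit through the discretised map $G_\gamma$ whose drift corrections are the piecewise-constant integrals $\int_0^{\cdot_\gamma}b_\xi$) do match the paper's; it is the choice of path space and of tightness criterion that must be changed for the proof to go through.
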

The proof is postponed to Appendix \ref{subsec:proofof4.1}. 


The first result (Theorem \ref{thm:strong_consist}) states the strong consistency of the estimator \eqref{eq:thetan-0} under the assumptions \ref{asmp:compact}, \ref{asmp:drift}, \ref{asmp:identif} (see Section \ref{subsec:consist} for the proof).

\begin{theorem}\label{thm:strong_consist}
Assume that \ref{asmp:compact}, \ref{asmp:drift}, \ref{asmp:identif} hold. Consider a distance $d$ on $\mathcal{M}_1(\mathbb{R}^d)$ which belongs to $\mathcal{D}_2$. Then $( \hat{\theta}_n )_{n \in \mathbb{N}}$ defined in \eqref{eq:thetan-0} is a strongly consistent estimator of $\theta_0$ in the following sense:
\begin{align*}%
    \lim_{n\rightarrow \infty} \hat{\theta}_n = \theta_0 \ \text{a.s.}
\end{align*}
\end{theorem}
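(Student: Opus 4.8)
The plan is to treat this as a standard argmin (M-estimation) consistency problem, where the contrast function is $\theta \mapsto d(\hat{\mu}_n, \mu_\theta)$ with the empirical measure $\hat{\mu}_n := \frac{1}{n}\sum_{k=0}^{n-1}\delta_{X_{kh}^{\theta_0}}$. First I would note that the estimator is well defined: by \ref{asmp:compact} the set $\Theta$ is compact and (as discussed below) $\theta \mapsto \mu_\theta$ is continuous for $d$, so $\theta \mapsto d(\hat{\mu}_n, \mu_\theta)$ is continuous and attains its infimum, whence $\hat{\theta}_n$ exists. Next, since $\hat{\theta}_n$ is a minimizer, comparing with $\theta_0$ and invoking Lemma~\ref{lem:contrast-conv} at $\theta = \theta_0$ gives
\[
d(\hat{\mu}_n, \mu_{\hat{\theta}_n}) \le d(\hat{\mu}_n, \mu_{\theta_0}) \underset{n \to \infty}{\longrightarrow} 0 \quad \text{a.s.}
\]
The triangle inequality then yields
\[
d(\mu_{\theta_0}, \mu_{\hat{\theta}_n}) \le d(\mu_{\theta_0}, \hat{\mu}_n) + d(\hat{\mu}_n, \mu_{\hat{\theta}_n}) \underset{n \to \infty}{\longrightarrow} 0 \quad \text{a.s.},
\]
so that the fitted stationary law converges to the true one.

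Second, I would upgrade convergence of the laws to convergence of the parameters by a compactness-plus-identifiability argument. Fix an $\omega$ in the almost sure event above and suppose, for contradiction, that $\hat{\theta}_n$ does not converge to $\theta_0$; then some subsequence stays outside a fixed ball around $\theta_0$, and by compactness of $\Theta$ it admits a further subsequence $\hat{\theta}_{n_k} \to \theta^* \ne \theta_0$. Continuity of $\theta \mapsto \mu_\theta$ together with continuity of the metric $d$ gives $d(\mu_{\theta_0}, \mu_{\theta^*}) = \lim_k d(\mu_{\theta_0}, \mu_{\hat{\theta}_{n_k}}) = 0$, hence $\mu_{\theta^*} = \mu_{\theta_0}$. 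By the identifiability assumption \ref{asmp:identif} this forces $\theta^* = \theta_0$, a contradiction. Therefore $\hat{\theta}_n \to \theta_0$ almost surely, which is the claim.

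The main obstacle is the continuity of $\theta \mapsto \mu_\theta$ with respect to $d$, which is used both for well-posedness of the argmin and for passing to the limit. Since $d \in \mathcal{D}_2$, it suffices to show $\mathcal{W}_2(\mu_{\theta_n}, \mu_{\theta^*}) \to 0$ whenever $\theta_n \to \theta^*$. I would couple the stationary solutions by building the driving fractional Brownian motions with different Hurst indices from a single underlying Brownian motion via the Mandelbrot--Van Ness representation \eqref{eq:MVN}; this bounds $\mathcal{W}_2(\mu_{\theta_n}, \mu_{\theta^*})^2$ by $\EE|\bar{X}^{\theta_n}_t - \bar{X}^{\theta^*}_t|^2$, and by \eqref{eq:boundsXtoY} the problem reduces to controlling $\sum_{i=0}^{q} \EE|\bar{Y}^{\theta_n}_{t+ih} - \bar{Y}^{\theta^*}_{t+ih}|^2$. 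The delicate point is that this $L^2$-continuity of the stationary solution in all three parameters must be obtained \emph{uniformly in} $t$, most notably in the Hurst parameter $H$; this is precisely where the coercivity of \ref{asmp:drift} and the continuity/tightness estimates of Appendix~\ref{appendix-L}, together with the Hurst-regularity results of the companion paper \cite{HRarxiv}, are invoked. Once this continuity is secured, the two steps above close the argument.
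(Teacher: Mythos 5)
Your skeleton is correct and is, in substance, the paper's own proof. Writing $\hat{\mu}_n := \frac{1}{n}\sum_{k=0}^{n-1}\delta_{X_{kh}^{\theta_0}}$, your argmin comparison $d(\mu_{\theta_0},\mu_{\hat{\theta}_n})\le 2\, d(\hat{\mu}_n,\mu_{\theta_0})\to 0$ a.s.\ (Lemma~\ref{lem:contrast-conv} applied at $\theta_0$), followed by compactness of $\Theta$, continuity of $\theta\mapsto\mu_\theta$ and \ref{asmp:identif}, is exactly the content of the paper's argument; the paper merely packages the conclusion through Proposition~\ref{thm:convergence}, which requires uniform convergence of $L_n(\theta)=d(\hat{\mu}_n,\mu_\theta)$ to $L(\theta)=d(\mu_{\theta_0},\mu_\theta)$. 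Since $|L_n(\theta)-L(\theta)|\le d(\hat{\mu}_n,\mu_{\theta_0})$ by the triangle inequality, that uniformity is automatic once Lemma~\ref{lem:contrast-conv} is known at $\theta_0$, so your direct subsequence extraction is an equivalent, slightly more self-contained formulation rather than a different method. Both versions rest on exactly two nontrivial inputs: Lemma~\ref{lem:contrast-conv}, and the continuity of $\theta\mapsto\mu_\theta$ with respect to $d$.

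The gap is in the second input, which you rightly call the main obstacle but then only sketch, and with the difficulty mis-located. First, for stationary processes $\EE|\bar{X}_t^{\theta_1}-\bar{X}_t^{\theta_2}|^2$ does not depend on $t$, so "uniformly in $t$" is not the issue in your coupling bound; the real problem is that two \emph{stationary} solutions cannot be compared by the Gr\"onwall/coercivity arguments at all, because their values at time $0$ are distinct, unknown functionals of the whole past of the coupled noises, whereas the estimates of Lemma~\ref{prop:continuity} require the two solutions to start from the same point. Second, the long-time regularity results of \cite{HRarxiv} and Appendix~\ref{app:regH} that you invoke control time-averaged (ergodic-mean) differences with random constants; in this paper they feed the moment bounds and the rate/Euler-scheme arguments, and they are not the tool that yields the continuity of $\theta\mapsto\mu_\theta$. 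The way to close your Step 4 is the paper's Proposition~\ref{prop:continuity-distance-measures}: for every $t\ge 0$,
\begin{align*}
d(\mu_{\theta_1},\mu_{\theta_2}) \le 2C \sup_{\theta\in\Theta}\mathcal{W}_p\bigl(\mathcal{L}(X_t^{\theta}),\mu_\theta\bigr) + C\,\| X_t^{\theta_1}-X_t^{\theta_2}\|_{L^p},
\end{align*}
where the first term is made $\le\varepsilon/2$ by choosing $t_0$ large via Lemma~\ref{prop:general-results} (a same-noise contraction estimate, uniform in $\theta$, which uses only the coercivity in \ref{asmp:drift} and the uniform moment bounds of Appendix~\ref{appendix-L}, and no Hurst regularity), and only then is the second term made small by the finite-time continuity Lemma~\ref{prop:continuity}, whose constant $C_{t_0,p}$ may depend on the now-fixed $t_0$ and which needs only the finite-time estimate on $\EE|B_t^{H_1}-B_t^{H_2}|^p$ from \cite{HRarxiv}. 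This "ergodicity first, finite-horizon continuity second" split is precisely what allows the proof to dispense with any uniform-in-time continuity in $H$; without it (or an equivalent completed argument), the key continuity claim in your proposal remains unproved.
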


We also have strong consistency of the estimator \eqref{eq:thetan-02} under the same assumptions.

\begin{theorem}\label{thm:strong_consist2}
Assume that \ref{asmp:compact}, \ref{asmp:drift}, \ref{asmp:identif} hold. Consider a distance $d$ on $\mathcal{M}_1(\mathbb{R}^d)$ which belongs to $\mathcal{D}_2$. Assume that the exponent $r$ in the sub-linear growth of $b_\xi$ in \eqref{eq:drift-growth} satisfies $r \leq 1$. Then the family $\{\hat{\theta}_{n,N, \gamma}, \ (n, N,\gamma) \in \mathbb{N}^2 \times \mathbb{R}_+ \}$ is a strong consistent estimator of $\theta_0$ in the following sense:
\begin{align*}%
    \lim_{\substack{n\rightarrow \infty \\  N \rightarrow \infty \\ \gamma \rightarrow 0}} \ \hat{\theta}_{n,N, \gamma} = \theta_0 \ \text{a.s.}
\end{align*}
\end{theorem}

We will also establish a rate of convergence of the estimators when $d=d_{CF,p}$ for some $p \in \mathbb{N^*}$, under the strong identifiability assumption:
\begin{itemize}
\item[\namedlabel{asmp:strong_identif}{{$\mathbf{I_s}$}}.] There exists a constant $c_1> 0$ and $\alpha \ge 2$, such that for every $\theta$ in $\Theta$,
\begin{align*}%
    d_{CF,p} (\mu_{\theta},\mu_{\theta_0})^{\alpha} \ge c_1 | \theta - \theta_0 |^2  .
\end{align*}
\end{itemize}
Under this assumption, we obtain a rate of convergence for $\hat{\theta}_n$ and $\hat{\theta}_{n,N,\gamma}$, which will be proved in Section \ref{sec:rateofconv} and Section \ref{sec:bounds-rate}.
\begin{theorem}\label{thm:rate}
Assume that \ref{asmp:compact} and \ref{asmp:drift} hold, and that \ref{asmp:strong_identif} holds for $p>\frac{\alpha+d(q+1)}{2}$. Then $ \lim_{n\rightarrow \infty} \hat{\theta}_n = \theta_0 \ \text{a.s.}$
and there exists a constant $C>0$ such that for any $n \in \mathbb{N}^*$,
\begin{align*}%
    \mathbb{E}| \hat{\theta}_n - \theta_0 |^2 \leq C n^{-\alpha(1-(\max(\mathcal{H}) \vee \frac{1}{2}) )}  .
\end{align*}
\end{theorem}

\begin{theorem}\label{thm:rate-2}
Assume that \ref{asmp:compact} and \ref{asmp:drift} hold and that \ref{asmp:strong_identif} holds for $p > \frac{\alpha+d(q+1)}{2}$. Assume that the exponent $r$ in the sub-linear growth of $b_\xi$ in \eqref{eq:drift-growth} satisfies $r \leq 1$. Then
$\lim_{n\rightarrow \infty,  N \rightarrow \infty, \gamma \rightarrow 0}  \hat{\theta}_{n,N, \gamma} = \theta_0$ \text{a.s.}
 Moreover, for any $\varepsilon \in (0,\min(\mathcal{H}))$ and $\varpi \in (0,1)$, there exists positive constants $C,\gamma_0$ such that for any $\gamma \in (0, \gamma_0]$ and $n,N \in \mathbb{N}$ satisfying $N \gamma \ge 1$, we have 
\begin{align*}
\mathbb{E}\left|\hat{\theta}_{n,N, \gamma}-\theta_{0}\right|^{2}&  \leq C\Big(n^{\alpha(-1+ \max(\mathcal{H}) \vee \frac{1}{2} )}+N^{\alpha(-1+ \max(\mathcal{H}) \vee \frac{1}{2} )}+ \gamma^{\alpha(\min(\mathcal{H})-\varepsilon)} \\ 
& \quad +(N \gamma)^{-\frac{\varpi \alpha^2}{2(\varpi \alpha+2d)} (2-2 \max(\mathcal{H}) \vee 1) }\Big) .
\end{align*}
\end{theorem}

\begin{remark}
We discuss here whether the above rate of convergence can be optimal. \\
Assume first that $H \leq \frac{1}{2}$, $d=1$ and $\alpha=2$ (that is the case for the fractional Ornstein-Uhlenbeck (OU) process for example, see \cite[Lemma 6.2]{panloup2020general}). Then by Theorem \ref{thm:rate-2}, using that $N \gamma \leq N$, we have for $\varepsilon \in (0,\frac{1}{2})$
\begin{align*}
\left( \mathbb{E}\left|\hat{\theta}_{n,N, \gamma}-\theta_{0}\right|^{2} \right)^{\frac{1}{2}} &  \leq  C \left( n^{-\frac{1}{2}} + \gamma^{\min(\mathcal{H})-\varepsilon} + (N\gamma)^{-\frac{1}{4}+\varepsilon} \right) .
\end{align*}
The term $n^{-\frac{1}{2}}$ corresponds to the convergence with respect to the sample size. It matches the CLT rate and generalises \cite[Theorem 4.5]{haress2020estimation} where, for the fractional OU process, the authors construct estimators of $\xi,\sigma$ and $H$ based on the invariant measure and prove a CLT with respect to the sample size.
 Moreover, taking $N = \gamma^{-(4\min(\mathcal{H})+1)}$, there is 
\begin{align*}
\left( \mathbb{E}\left|\hat{\theta}_{n,N, \gamma}-\theta_{0}\right|^{2} \right)^{\frac{1}{2}} &  \leq  C \left( n^{-\frac{1}{2}} + \gamma^{\min(\mathcal{H})-\varepsilon} \right) .
\end{align*}
The term $\gamma^{\min(\mathcal{H})- \varepsilon}$ corresponds to the convergence of the Euler scheme defined in \eqref{eq:euler-scheme} and is known to be optimal optimal for strong errors. So the rate cannot be improved in this situation.\\
 Beyond the fractional OU process, we generalise the rate obtained in \cite[Theorem 2.13]{panloup2020general} to an estimation of all the parameters. 
Finally, the CLT obtained in \cite{haress2020estimation} holds for $H \in (0, \frac{3}{4})$ while here the rate we obtained is slower when $H > \frac{1}{2}$: for $\alpha=2$, the first $L^2$ error term in the bound of Theorem \ref{thm:rate-2} reads $n^{-1+\max(\mathcal{H})} \gg n^{-\frac{1}{2}}$.
\end{remark}

\begin{remark}
The proofs of Theorem \ref{thm:rate} and Theorem \ref{thm:rate-2} rely on an upper bound of $\mathbb{E}[d(\mu_{\widehat{\theta}_n}, \mu_{\theta_0})^\alpha]$. This involves bounding the quantities in the left-hand side of \eqref{eq:boundsXtoY} with $p=\alpha$. In \eqref{eq:boundsXtoY} the constant $C_{\alpha,q}$ is of order $q^{\alpha} L^\alpha$, where $L$ is the biggest Lipschitz constant of the linear transformations. Hence the constant $C$ in Theorem \ref{thm:rate} depends on $(\ell^{i})_{i=1,\dots,q}$ as $C \sim q^{\alpha+1} L^\alpha$. This can be useful in practice when choosing $(\ell^{i})_{i=1,\dots,q}$.
\end{remark}

\subsection{Application to fractional Ornstein-Uhlenbeck-type processes}\label{sec:app-OU}

We first discuss  the identifiability assumption for the fractional Ornstein-Uhlenbeck (OU) process, then for a family of small perturbations of the fractional OU process. 

\paragraph{Identifiability assumption.} Consider the family of one-dimensional fractional OU processes given by
\begin{align}\label{eq:fO-U}
\begin{array}{ll}
    dU^{\theta} = -\xi U^{\theta} dt + \sigma dB ,\quad U^{\theta}_0 = 0 .
\end{array}    
\end{align}
It is known from the proof of \cite[Proposition 3.12]{Hairer}
  that the stationary measure of $U^\theta$ follows the Gaussian distribution
\begin{align}\label{eq:fOUinvariant}
    \mathcal{N}(0, \sigma^2 H \Gamma(2H) \xi^{-2H}).
\end{align}
In particular, this distribution alone does not permit to identify simultaneously $\xi$, $\sigma$ and $H$. Hence the need to consider increments.\\
Here, $\xi$ and $\sigma$ are in compact subsets of $(0, \infty)$ and the linear transformation $\ell^1$ takes the form of an increment
\begin{align*}
\ell^1(U^{\theta}_\cdot,U^{\theta}_{.+h}) = U^{\theta}_{.+h} - U^{\theta}_\cdot  .
\end{align*}
We suppose here that $\theta$ is of dimension 2, i.e. only two of the three parameters $(\xi,  \sigma,  H)$ are unknown. In the following result, we establish that \ref{asmp:identif} is verified.

\begin{prop}\label{prop:identifOU}
Consider the fractional Ornstein-Uhlenbeck model defined by equation \eqref{eq:fO-U} and assume that one of the parameters $\xi$, $\sigma$ or $H$ is known. Let $p > 1$ and let $\mu_{\theta}$ denote the stationary measure of $(U^\theta_{\cdot}, U^\theta_{\cdot+h}-U^\theta_{\cdot})$. Then there exists $h_0>0$ such that for all $h \in (0,h_0)$, we have
\begin{align*}
\forall \theta_1, \theta_2 \in \Theta, \quad  d_{CF,p} (\mu_{\theta_1}, \mu_{\theta_2}) =0 \ \ \ \text{iff} \ \ \ \theta_1 = \theta_2 .
\end{align*}
\end{prop}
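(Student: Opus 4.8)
The plan is to reduce the identifiability question to the injectivity of a two‑dimensional map built from the stationary variance and the lag‑$h$ autocovariance, and then to separate the two free parameters using the small‑$h$ asymptotics of the increment variance. First, since $g_p>0$ everywhere, $d_{CF,p}(\mu_{\theta_1},\mu_{\theta_2})=0$ holds iff the characteristic functions of $\mu_{\theta_1}$ and $\mu_{\theta_2}$ agree $g_p$‑a.e., hence, by continuity, everywhere, i.e. iff $\mu_{\theta_1}=\mu_{\theta_2}$. Now $\bar U^{\theta}$ is a centered stationary Gaussian process, so $\mu_\theta$ is the centered Gaussian law on $\R^2$ of $(\bar U^\theta_0,\bar U^\theta_h-\bar U^\theta_0)$ and is determined by its covariance matrix. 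By stationarity ($\mathrm{Var}(\bar U^\theta_h)=\mathrm{Var}(\bar U^\theta_0)$) the three entries of that matrix are $(v,\,2v-2c,\,c-v)$, where $v(\theta):=\mathrm{Var}(\bar U^\theta_0)$ and $c(\theta):=\mathrm{Cov}(\bar U^\theta_0,\bar U^\theta_h)$, and conversely these two scalars are recovered from the matrix. Equivalently one may use $v(\theta)$ and the increment variance $D(\theta):=\mathrm{Var}(\bar U^\theta_h-\bar U^\theta_0)=2(v(\theta)-c(\theta))$. Thus it suffices to prove that, for $h$ small, $\theta\mapsto(v(\theta),D(\theta))$ is injective on each of the three two‑dimensional parameter domains.

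Next I would make the two observables explicit. From the stationary representation $\bar U^\theta_t=\sigma\int_{-\infty}^t e^{-\xi(t-s)}\,dB^H_s$ the stationary variance is the real‑analytic function $v(\theta)=\sigma^2\,H\,\Gamma(2H)\,\xi^{-2H}$ (which reduces to $\sigma^2/(2\xi)$ at $H=\tfrac12$). For $D$, I would write $\bar U^\theta_h-\bar U^\theta_0=\sigma\,(B^H_h-B^H_0)-\xi\int_0^h \bar U^\theta_s\,ds$: the first term contributes $\sigma^2 h^{2H}$, while the drift correction contributes a variance term of order $h^{2}$ and a cross term of order $h^{1+H}$, both uniformly bounded on the compact domain by \ref{asmp:compact} and the uniform moment bounds of Proposition~\ref{prop:uniform-Lq-bounds}. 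Since $H<1$, both corrections are $o(h^{2H})$, so $D(\theta)=\sigma^2 h^{2H}\bigl(1+\epsilon(\theta,h)\bigr)$ with $\sup_{\theta}|\epsilon(\theta,h)|\to 0$ as $h\to 0$.

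I would then treat the three cases by isolating one parameter from the leading term of $D$ and recovering the other from $v$. When $H$ is known, $D$ is to leading order proportional to $\sigma^2$, hence strictly increasing in $\sigma$, and $v$ is strictly decreasing in $\xi$; when $\sigma$ is known, $\log D\approx 2\log\sigma+2H\log h$ is strictly monotone in $H$ because $\log h<0$, after which $v$ recovers $\xi$; when $\xi$ is known, the ratio $v/D\approx H\Gamma(2H)(\xi h)^{-2H}$ is strictly monotone in $H$ for small $h$ (the factor $(\xi h)^{-2H}=e^{2H|\log(\xi h)|}$ dominates), after which $D$ recovers $\sigma$. To upgrade these leading‑order monotonicities to genuine \emph{global} injectivity of $(v,D)$, I would show that for $h\le h_0$ the relevant partial derivative of the first observable (for instance $\partial_H\log D\sim 2\log h$) is bounded away from zero uniformly while the competing partial derivative is uniformly small; the implicit function theorem then represents each level set of the first observable as a graph of small slope, along which the second observable is strictly monotone.

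The delicate point, and the main obstacle, is precisely this last upgrade. The crude estimate $\sup_\theta|\epsilon(\theta,h)|\to 0$ only forces the parameter gap between two matching $\theta$'s to be \emph{small}, not zero, so for a fixed $h$ it does not by itself exclude distinct parameters. What is really needed is uniform control of the remainder $\epsilon(\theta,h)$ \emph{together with its derivatives in} $\theta$, so that the explicit strictly monotone leading term dominates for every sufficiently small $h$. Establishing this $C^1$‑in‑$\theta$ control of the increment variance is the technical heart: it amounts to differentiating the stationary fractional Ornstein–Uhlenbeck process in $\xi$ and in $H$ and bounding the resulting quantities uniformly on the compact domain, for which I would rely on the uniform moment estimates and on the Hurst‑regularity results recalled from the companion paper \cite{HRarxiv}.
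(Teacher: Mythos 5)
Your opening reduction is correct and coincides exactly with the paper's own first step: since $g_p>0$, vanishing of $d_{CF,p}$ forces equality of the two laws, and since $(\bar{U}^\theta_0,\bar{U}^\theta_h-\bar{U}^\theta_0)$ is centered Gaussian and stationary, everything reduces to the injectivity, on each two-parameter domain, of $\theta\mapsto\bigl(\mathrm{Var}(\bar{U}^\theta_0),\mathrm{Cov}(\bar{U}^\theta_0,\bar{U}^\theta_h)\bigr)$ — this is precisely the content of Section~\ref{subsec:simplif} and Lemma~\ref{lem:injectivity}, using \eqref{eq:fOUinvariant}. Your formula for $v(\theta)$ and the uniform expansion $D(\theta)=\sigma^2 h^{2H}\bigl(1+\epsilon(\theta,h)\bigr)$ with $\sup_\theta|\epsilon(\theta,h)|\to 0$ are also correct, and your leading-order monotonicity heuristics in the three cases match, case by case, what the paper actually proves.

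The gap is the one you name yourself, and it is fatal to the proposal as a proof: for a \emph{fixed} $h>0$, leading-order monotonicity plus a remainder controlled only in sup-norm does not exclude two distinct parameters with exactly matching $(v,D)$; what is needed is uniform control of the $\theta$-derivatives of the remainder, and this is never established — it is only announced as the ``technical heart''. This step is not a routine verification, and it is exactly where the paper's proof of Lemma~\ref{lem:injectivity} spends all of its effort. Quantitatively, in the hardest case $\theta=(\xi,H)$ the favorable derivative along a level set of $v$ is bounded \emph{below} by $C_1 h^{2H}|\log h|$, while the competing term $g'_{a,2}$ (the contribution of $\xi$ varying along that level set) is only bounded \emph{above} by $\tilde{C}_2 h^{2H}$: the margin is a single factor of $|\log h|$. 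Obtaining estimates this sharp requires the exact spectral representation \eqref{eq:fOUcov} of the stationary covariance and careful treatment of the oscillatory integrals $\int_0^\infty\cos(\xi h x)\,\frac{x^{1-2H}}{1+x^2}\,dx$. In your formulation the analogous requirement is $|\partial_H\epsilon(\theta,h)|=o(|\log h|)$ uniformly on $\Theta$, which means differentiating in $H$ quantities such as $\mathbb{E}\bigl[(B^H_h-B^H_0)\int_0^h\bar{U}^\theta_s\,ds\bigr]$ and $\mathbb{E}\bigl(\int_0^h\bar{U}^\theta_s\,ds\bigr)^2$, i.e.\ the $H$-derivative of the stationary law itself — a substantial piece of analysis (of the type developed in \cite{HRarxiv}) that your proposal defers entirely. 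As written, the proposal is a correct reduction, identical to the paper's, followed by an unexecuted plan for the decisive estimate; it therefore does not constitute a proof of the proposition.
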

The proof is given in Section \ref{subsec:simplif}.

\paragraph{Strong identifiability assumption.} 
We show that Assumption \ref{asmp:strong_identif} holds for some specific examples of \eqref{eq:fsde} and for the distance $d=d_{CF,p}$. Specifically, we consider a family $U^{\lambda, \theta}$ of real-valued processes defined by
\begin{align}\label{eq:O-U-perturbation}
d U_t^{\lambda, \theta} = \left( - \xi U_t^{\lambda, \theta} + \lambda b_{\xi}(U_t^{\lambda, \theta}) \right) dt + \sigma d B_t .
\end{align}
Under the assumption that the coefficient $b_\xi$ is bounded altogether with its derivatives with respect to $\xi$ and $y$, one can check that the drift term $b(\cdot) = -\xi \cdot  + \lambda b_\xi(\cdot)$ satisfies \ref{asmp:drift} for $\lambda$ small enough. Therefore, the equation has a unique invariant measure, which is denoted by $\mu_{\theta}^\lambda$. The process $U^{\lambda, \theta}$ can be seen as a small perturbation of the fractional Ornstein-Uhlenbeck process, since $U^{0,\theta} = U^\theta$, where $U^\theta$ is the fractional OU process defined in \eqref{eq:fO-U}. For the fractional OU process, we simply write $\mu_\theta$ for the invariant measure. 
We make the following assumption on the parameters:
\begin{itemize}
\item[\namedlabel{asmp:drift2}{{$\mathbf{\widetilde{A_0}}$}}.] Assume that $\xi$, $\sigma$ and $H$ are one-dimensional parameters and that
\begin{align*}
\begin{split}
\xi \in [m_\Xi, M_\Xi ], & ~ \text{with} ~ 0 < m_\Xi < M_\Xi < \infty \\
\sigma \in  [m_\Sigma, M_\Sigma ], & ~ \text{with} ~ 0 < m_\Sigma < M_\Sigma < \infty \\
H \in [m_\mathcal{H}, M_\mathcal{H} ], & ~ \text{with} ~  0 < m_\mathcal{H} < M_\mathcal{H} < 1 .
\end{split}
\end{align*}
\end{itemize}
We shall prove that $U^{\lambda, \theta}$ satisfies assumption \ref{asmp:strong_identif} when only one parameter is unknown (so either $\theta=\xi$, $\theta=\sigma$ or $\theta=H$). When referring to $\theta$, we will write our assumption above as $\theta \in [m_\Theta, M_\Theta]$.

\smallskip

The first lemma below states that \ref{asmp:strong_identif} is satisfied for $U^{\theta}:=U^{0, \theta}$.
\begin{lemma}\label{lem:strongidentif-O-U}
Let $\theta$ represent either $\xi$, $\sigma$ or $H$. Assume that \ref{asmp:drift2} holds and if $\theta = H$, assume further that
\begin{align}\label{eq:xiawayfrom0}
\xi > \sup_{H \in [m_{\mathcal{H}}, M_{\mathcal{H}}]} \exp\left({\frac{\Gamma(2H)+2H\Gamma'(2H)}{2H \Gamma(2H)}}\right) ~\text{or}~ \xi < \inf_{H \in [m_{\mathcal{H}}, M_{\mathcal{H}}]} \exp\left({\frac{\Gamma(2H)+2H\Gamma'(2H)}{2H \Gamma(2H)}}\right) .
\end{align}
Let $p \ge 1$, then for all $\theta_1, \theta_2 \in [m_\Theta,M_\Theta]$,
\begin{align*}%
d_{CF,p} (\mu_{\theta_1},\mu_{\theta_2}) \ge c | \theta_1- \theta_2 | ,
\end{align*}
where $c$ is a constant that  does not depend on $\theta_1$ or $\theta_2$. 
\end{lemma}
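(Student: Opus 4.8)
The plan is to exploit the explicit Gaussianity of the stationary law. By \eqref{eq:fOUinvariant}, $\mu_\theta = \mathcal{N}(0, v(\theta))$ with $v(\theta) = \sigma^2 H\Gamma(2H)\xi^{-2H}$, and since the unknown parameter ranges over the compact set $[m_\Theta, M_\Theta]$ (the other two coordinates being fixed) while $v$ is continuous and strictly positive, all admissible variances lie in a compact interval $[v_-, v_+] \subset (0,\infty)$. The characteristic function of $\mu_\theta$ is $\chi \mapsto e^{-v(\theta)\chi^2/2}$, so
\[
d_{CF,p}(\mu_{\theta_1}, \mu_{\theta_2})^2 = \int_\R \left(e^{-v(\theta_1)\chi^2/2} - e^{-v(\theta_2)\chi^2/2}\right)^2 g_p(\chi)\,d\chi .
\]
The argument then splits into two independent estimates: a lower bound of this distance by $|v(\theta_1) - v(\theta_2)|$, and a lower bound of the variance gap by $|\theta_1 - \theta_2|$.

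For the first estimate I would apply the mean value theorem to $v \mapsto e^{-v\chi^2/2}$, whose derivative is $-\tfrac{\chi^2}{2}e^{-v\chi^2/2}$. As the intermediate point lies in $[v_-, v_+]$, one has $e^{-\tilde v\chi^2/2} \geq e^{-v_+\chi^2/2}$, hence
\[
\left(e^{-v(\theta_1)\chi^2/2} - e^{-v(\theta_2)\chi^2/2}\right)^2 \geq \tfrac{\chi^4}{4}\,e^{-v_+\chi^2}\,|v(\theta_1) - v(\theta_2)|^2 .
\]
Integrating against $g_p$ then gives $d_{CF,p}(\mu_{\theta_1}, \mu_{\theta_2}) \geq \kappa\,|v(\theta_1) - v(\theta_2)|$, where $\kappa = \left(\tfrac14\int_\R \chi^4 e^{-v_+\chi^2} g_p(\chi)\,d\chi\right)^{1/2}$ is finite and strictly positive, the integrand decaying like a Gaussian.

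For the second estimate it suffices to show that $|\partial_\theta v|$ is bounded below by a positive constant on the compact parameter set, since then $|v(\theta_1) - v(\theta_2)| \geq c'|\theta_1 - \theta_2|$ by the mean value theorem. When $\theta = \sigma$ one has $\partial_\sigma v = 2v/\sigma \geq 2v_-/M_\Sigma > 0$, and when $\theta = \xi$ one has $|\partial_\xi v| = 2Hv/\xi \geq 2 m_\mathcal{H} v_-/M_\Xi > 0$. The delicate case is $\theta = H$, where $\partial_H v = v(H)\big(\tfrac1H + \tfrac{2\Gamma'(2H)}{\Gamma(2H)} - 2\ln\xi\big)$; this vanishes exactly when $\ln\xi = \tfrac{\Gamma(2H) + 2H\Gamma'(2H)}{2H\Gamma(2H)}$, and hypothesis \eqref{eq:xiawayfrom0} is precisely what keeps $\ln\xi$ strictly on one side of this quantity for every $H \in [m_\mathcal{H}, M_\mathcal{H}]$. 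By continuity and compactness the bracketed factor is then sign-definite and bounded away from $0$, and since $v(H) \geq v_- > 0$, we obtain $|\partial_H v| \geq c' > 0$. Combining the two estimates yields $d_{CF,p}(\mu_{\theta_1}, \mu_{\theta_2}) \geq \kappa c'\,|\theta_1 - \theta_2|$, i.e. \ref{asmp:strong_identif} holds with $\alpha = 2$. The only genuine obstacle is the case $\theta = H$: one must locate the zero set of $\partial_H v$ and verify that \eqref{eq:xiawayfrom0} excludes it uniformly over the compact set; the remaining steps are routine.
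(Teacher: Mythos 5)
Your proposal is correct and follows essentially the same route as the paper's proof: both exploit the explicit Gaussian form of $\mu_\theta$ and hinge on the sign-definiteness, uniformly over the compact parameter set, of the derivative of the variance $v(\theta)=\sigma^2 H\Gamma(2H)\xi^{-2H}$, with \eqref{eq:xiawayfrom0} excluding the zero of $\partial_H v$ exactly as in the paper. The only cosmetic differences are that you chain two mean-value bounds through $v$ (where the paper differentiates $\theta\mapsto \exp(-v(\theta)\eta^2/2)$ directly and integrates that derivative between $\theta_1$ and $\theta_2$), and that you handle the case $\theta=\xi$ by the same computation, whereas the paper simply cites \cite[Lemma 6.2]{panloup2020general}.
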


\smallskip

The previous lemma extends to the solution of Equation \eqref{eq:O-U-perturbation}. 
\begin{prop}\label{lem:O-U-perturbation}
Let $U^{\lambda, \theta}$ be the process defined by \eqref{eq:O-U-perturbation} where $\theta$ is either $\xi$, $\sigma$ or $H$, and let $p>3/2$. Assume that \ref{asmp:drift2} holds and that $b_\xi$, $\partial_y b_\xi$, $\partial_\xi b_\xi$ are bounded. Moreover, if $\theta=\xi$, assume that $|\partial^2_{y, \xi} b_\xi| \leq 1$  and if $\theta = H$, assume that
\eqref{eq:xiawayfrom0} holds. \\
Then there exists $\lambda_0 = \lambda_0(m_\Theta,M_\Theta,p)>0$ and  $c_{m_\Theta,M_\Theta,p}>0$ such that for any $\lambda \in (0, \lambda_0)$ and any $\theta_1, \theta_2 \in [m_\Theta, M_\Theta]$,
\begin{align*}%
d_{CF,p}(\mu_{\theta_1}^\lambda, \mu_{\theta_2}^\lambda) \ge c_{m_\Theta,M_\Theta,p} | \theta_1-\theta_2 | .
\end{align*}
\end{prop}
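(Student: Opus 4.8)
The plan is to treat \eqref{eq:O-U-perturbation} as a $\lambda$-perturbation of the fractional Ornstein--Uhlenbeck process and to transfer the lower bound of Lemma~\ref{lem:strongidentif-O-U} from $\lambda=0$ to small $\lambda$, working at the level of the $\theta$-derivative of the characteristic function. Writing $\bar{U}^{\lambda,\theta}$ for the stationary solution of \eqref{eq:O-U-perturbation} and $\psi_\theta^\lambda(\eta):=\EE[e^{i\eta\bar{U}^{\lambda,\theta}_0}]$ for the characteristic function of $\mu_\theta^\lambda$, the definition \eqref{eq:d_cf} of $d_{CF,p}$ in dimension one reads
\begin{align*}
d_{CF,p}(\mu_{\theta_1}^\lambda,\mu_{\theta_2}^\lambda)^2=\int_{\R}\bigl|\psi_{\theta_1}^\lambda(\eta)-\psi_{\theta_2}^\lambda(\eta)\bigr|^2 g_p(\eta)\,d\eta .
\end{align*}
Recall that the mechanism behind Lemma~\ref{lem:strongidentif-O-U} is that at $\lambda=0$ the measure $\mu_\theta$ is Gaussian, so $\psi_\theta^0(\eta)=\exp(-\tfrac12 v(\theta)\eta^2)$ with $v(\theta)=\sigma^2 H\Gamma(2H)\xi^{-2H}$, and $\partial_\theta\psi_\theta^0(\eta)$ is \emph{real}, of \emph{constant sign} in $\theta$, and bounded below in modulus by $C\eta^2 e^{-c\eta^2}$ uniformly over the compact parameter range (this is precisely what the proof of that lemma establishes). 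The obstruction to a naive triangle inequality is that $d_{CF,p}(\mu_\theta^\lambda,\mu_\theta)$ is only $O(\lambda)$, which is useless once $|\theta_1-\theta_2|\lesssim\lambda$; the argument must therefore be carried out through the $\theta$-derivative rather than through a direct comparison of measures.

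First I would establish the perturbative estimates. Coupling $\bar{U}^{\lambda,\theta}$ and $\bar{U}^{0,\theta}$ through the same fBm $B$ and using that the drift $-\xi\,\cdot+\lambda b_\xi(\cdot)$ is contractive uniformly for $\lambda\le\lambda_0$ (by \ref{asmp:drift}, which holds for small $\lambda$) together with the boundedness of $b_\xi$, $\partial_y b_\xi$, $\partial_\xi b_\xi$, one obtains, as in Lemma~\ref{prop:general-results} and the regularity and uniform-moment results used earlier (Proposition~\ref{prop:regularity-result} and Proposition~\ref{prop:uniform-Lq-bounds}),
\begin{align*}
\sup_{\theta}\EE\bigl|\bar U^{\lambda,\theta}_0-\bar U^{0,\theta}_0\bigr|^2\le C\lambda^2
\quad\text{and}\quad
\sup_{\theta}\EE\bigl|\partial_\theta\bar U^{\lambda,\theta}_0-\partial_\theta\bar U^{0,\theta}_0\bigr|^2\le C\lambda^2 ,
\end{align*}
with $\partial_\theta\bar U^{\lambda,\theta}_0$ controlled uniformly in $(\theta,\lambda)$ with finite moments. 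Differentiating $\psi_\theta^\lambda$ under the expectation and using $|e^{i\eta x}-e^{i\eta y}|\le|\eta|\,|x-y|$ then yields the comparison
\begin{align*}
\bigl|\partial_\theta\psi_\theta^\lambda(\eta)-\partial_\theta\psi_\theta^0(\eta)\bigr|\le C\lambda\,(1+\eta^2) ,
\end{align*}
where the $\eta^2$ comes from the two factors of $\eta$ produced when differentiating the exponential.

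I would then combine the two ingredients. Writing $\psi_{\theta_1}^\lambda-\psi_{\theta_2}^\lambda=\int_{\theta_2}^{\theta_1}\partial_\theta\psi_\theta^\lambda\,d\theta$ and letting $u(\eta)=\pm1$ denote the ($\theta$-independent) sign of $\partial_\theta\psi_\theta^0(\eta)$, I bound the char-function difference below by its projection onto $u$:
\begin{align*}
\bigl|\psi_{\theta_1}^\lambda(\eta)-\psi_{\theta_2}^\lambda(\eta)\bigr|
\ge \Bigl|\int_{\theta_1\wedge\theta_2}^{\theta_1\vee\theta_2}\mathrm{Re}\bigl(u(\eta)\,\partial_\theta\psi_\theta^\lambda(\eta)\bigr)\,d\theta\Bigr|
\ge \int_{\theta_1\wedge\theta_2}^{\theta_1\vee\theta_2}\bigl(|\partial_\theta\psi_\theta^0(\eta)|-C\lambda(1+\eta^2)\bigr)\,d\theta ,
\end{align*}
using that $u\,\partial_\theta\psi_\theta^0=|\partial_\theta\psi_\theta^0|$. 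On a fixed compact frequency band $\eta\in[\eta_-,\eta_+]$ away from the origin one has $|\partial_\theta\psi_\theta^0(\eta)|\ge c_0>0$, so for $\lambda\le\lambda_0$ small enough the integrand is $\ge c_0/2$, giving $|\psi_{\theta_1}^\lambda-\psi_{\theta_2}^\lambda|\ge\tfrac{c_0}{2}|\theta_1-\theta_2|$ on that band; integrating against $g_p$ over $[\eta_-,\eta_+]$ yields $d_{CF,p}(\mu_{\theta_1}^\lambda,\mu_{\theta_2}^\lambda)\ge c_{m_\Theta,M_\Theta,p}|\theta_1-\theta_2|$. The same scheme covers $\theta=\xi,\sigma,H$, the only change being the input lower bound on $|\partial_\theta\psi_\theta^0|$ supplied by the corresponding case of Lemma~\ref{lem:strongidentif-O-U} (with \eqref{eq:xiawayfrom0} imposed when $\theta=H$).

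The main obstacle is precisely the regime $|\theta_1-\theta_2|\to0$, where a triangle inequality fails and only the derivative-level comparison works; this forces the two uniform controls above, namely that $\mu_\theta^\lambda$ \emph{and its $\theta$-derivative} depend on $\lambda$ in an $O(\lambda)$ way uniformly over the compact parameter set, and that the unperturbed derivative retains a constant sign and a quantitative positive lower bound on a fixed band. Establishing the uniform-in-$(\theta,\lambda)$ differentiability and moment bounds of $\bar U^{\lambda,\theta}_0$ and $\partial_\theta\bar U^{\lambda,\theta}_0$ (via the contractivity available for small $\lambda$ and the regularity/uniform-moment results quoted above) is the technical heart of the argument; the restriction $p>\tfrac32$ enters only to guarantee that the $\eta^2$-growth produced by differentiating characteristic functions is square-integrable against $g_p$, so that all the integrals above are finite.
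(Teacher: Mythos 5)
Your overall strategy---observing that a naive triangle inequality only gives an $O(\lambda)$ error and that one therefore needs a perturbative estimate which is simultaneously $O(\lambda)$ \emph{and} proportional to $|\theta_1-\theta_2|$---correctly identifies the crux of the problem. However, your implementation has a genuine gap: it rests on the two displayed ``perturbative estimates'', in particular the existence of the $L^2$-derivative $\partial_\theta \bar U^{\lambda,\theta}_0$ of the \emph{stationary} solution together with the bound $\sup_\theta\EE\bigl|\partial_\theta\bar U^{\lambda,\theta}_0-\partial_\theta\bar U^{0,\theta}_0\bigr|^2\le C\lambda^2$, and neither is established by the results you invoke. Proposition~\ref{prop:regularity-result} and Proposition~\ref{prop:uniform-Lq-bounds} only give H\"older-type bounds (exponent $\varpi<1$) on ergodic means and uniform moments of the process; they say nothing about differentiability of $\theta\mapsto\bar U^{\lambda,\theta}_0$, let alone an $O(\lambda)$ comparison of derivatives. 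For $\theta=\xi$ or $\theta=\sigma$ one could hope to build the derivative process (it solves a linear equation) and its stationary version, but for $\theta=H$ your step requires differentiating the stationary solution---hence the driving fBm over its entire past---with respect to the Hurst parameter, with moment bounds uniform in time. This is precisely the difficulty the paper flags as unresolved in its discussion of $\partial_H B$ (Section on numerics, ``Beyond the fractional O-U model''): the Neuenkirch result gives differentiability in $H$ for each fixed $t$ almost surely, which is not enough here, and no such uniform result is available in the paper or in \cite{HRarxiv}. In short, the technical heart of your argument is not a routine adaptation of quoted results; in the case $\theta=H$ it is essentially an open problem relative to the paper's toolkit.

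The paper's proof is engineered exactly to avoid any $\theta$-derivative of laws. It writes $d_{CF,p}(\mu^\lambda_{\theta_1},\mu^\lambda_{\theta_2})\ge I_3^{1/2}-\bigl(I_2^{1/2}+I_{11}^{1/2}+I_{12}^{1/2}\bigr)$, where all quantities involve the processes at a finite time $t$ (chosen large depending on $|\theta_1-\theta_2|$): the terms $I_{1j}$, comparing perturbed processes to their stationary versions, are exponentially small by Lemma~\ref{prop:general-results}; the term $I_3$, which involves only the \emph{unperturbed} OU processes, is bounded below by $c|\theta_1-\theta_2|^2$ via Lemma~\ref{lem:strongidentif-O-U}, whose proof uses the explicit Gaussian characteristic function of \eqref{eq:fOUinvariant} rather than any pathwise derivative; and the mixed term $I_2$---a rectangular (second-difference) increment in $(\lambda,\theta)$, which plays the role of your derivative comparison---is bounded by $C\lambda^2|\theta_1-\theta_2|^2$ using the pathwise bound on $\|\partial_\lambda U^{\lambda,\theta}\|_\infty$ from \cite{panloup2020general} together with the \emph{time-uniform Lipschitz} (not differentiable) dependence $\EE|U^{0,\theta_1}_t-U^{0,\theta_2}_t|^2\le C|\theta_1-\theta_2|^2$, which for $\theta=H$ comes from \cite[Lemma A.1]{HRarxiv} and for $\theta=\sigma$ from the explicit linear structure. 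If you wish to salvage your argument, you should replace the derivative-level comparison by this second-difference comparison, which requires only Lipschitz continuity in $\theta$ of the unperturbed process, uniformly in time.
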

The proofs of Lemma~\ref{lem:strongidentif-O-U} and Proposition~\ref{lem:O-U-perturbation} are given in Section \ref{sec:OU-perturbation}.

\section{Strong consistency and rate of convergence of the estimator $\hat{\theta}_n$}\label{sec:consist}

To prove the almost sure convergence, we will use \cite[Proposition 4.3]{panloup2020general} that we recall in Proposition \ref{thm:convergence} below for the reader's convenience. It concerns the limiting property of a collection of real-valued processes $\{L_{v}(\theta)\}_{v}$ indexed by a generic $v$ which lies in a topological space and converges to a generic $v_0$.  In this Section, we always have $v \equiv n \in \mathbb{N}$, and so $\lim_{v \rightarrow v_0}$ is to be understood as $\lim_{n \rightarrow \infty}$. In Section \ref{sec:practical}, we will take $v \equiv (\gamma,n,N)$ with $\gamma \rightarrow 0$ and $n,N \rightarrow \infty$, and therefore $\lim_{v \rightarrow v_0}$ will be understood as $\lim_{n \rightarrow
 \infty, N \rightarrow \infty, \gamma \rightarrow 0}$.
\begin{prop}[{\cite[Proposition 4.3]{panloup2020general}}]
\label{thm:convergence}
Let $\Theta$ be a compact set and $\{\theta \in \Theta \mapsto L_v(\theta) \}_{v}$ a family of non-negative stochastic processes. Assume that 
\begin{itemize}
    \item[(i)] Almost surely, $\lim_{v \rightarrow v_0} L_v(\theta) = L(\theta)$ uniformly in $\theta$.
    \item[(ii)] $\theta \mapsto L(\theta)$ is deterministic and continuous in $\theta$.
    \item[(iii)] For any $v$, the set $\argmin\{ L_v(\theta), \theta \in \Theta \} $ is non-empty.
\end{itemize}
Let $\theta_v \in \argmin_{\theta \in \Theta} L_v(\theta)$. If $A$ is a limit point of $\{\theta_v\}_{v}$, then $A \in \argmin_{\theta \in \Theta} L(\theta) $.    
\end{prop}

In this Section, we always have $L_v(\theta) = d(\frac{1}{n} \sum_{k=0}^{n-1} \delta_{X_{kh}}, \mu_{\theta})$, with $v \equiv n$ and $v_0 \equiv \infty$.

\subsection{Continuity of $\theta \mapsto d(\mu_\theta,\mu_{\theta_0})$}%

First, we prove two lemmas that state the $L^p(\Omega)$-continuity with respect to $\theta$ of the solution to \eqref{eq:fsde}, and the exponential convergence of the law of $X^\theta$ (defined in \eqref{eq:defX}) towards its stationary distribution $\mu_{\theta}$. Then we deduce the continuity of the mapping $\theta \mapsto d(\mu_\theta,\mu_{\theta_0})$ in Proposition~\ref{prop:continuity-distance-measures}.

\begin{lemma}\label{prop:continuity}
Assume \ref{asmp:compact} and \ref{asmp:drift} are satisfied. Let $T>0$ and $p>0$. Let $W$ be an $\R^d$-Brownian motion and for any $H\in (0,1)$, denote by $B^H$ the fBm with underlying noise $W$ (i.e. as in \eqref{eq:MVN}).
There exists a constant $C_{T,p}>0$ such that for any $\theta_1, \theta_2 \in \Theta$,
\begin{align*}%
\| Y_T^{\theta_1} - Y_T^{\theta_2} \|_{L^p}  & \leq  C_{T,p} | \theta_1 - \theta_2  | ,
\end{align*}
where $Y^{\theta_{1}}$ (resp. $Y^{\theta_{2}}$) is the solution to \eqref{eq:fsde} with parameter $\theta_{1}$ (resp. $\theta_{2}$) and driving fBm $B^{H_{1}}$ (resp. $B^{H_{2}}$),  and both $Y^{\theta_{1}}$ and $Y^{\theta_{2}}$ start from the same initial condition.
\end{lemma}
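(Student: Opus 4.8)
The plan is to treat the three groups of parameters separately and then recombine by the triangle inequality. Writing $\theta_1 = (\xi_1,\sigma_1,H_1)$ and $\theta_2 = (\xi_2,\sigma_2,H_2)$, I would insert the intermediate parameters $(\xi_2,\sigma_1,H_1)$ and $(\xi_2,\sigma_2,H_1)$, so that it suffices to bound the drift variation by $C_{T,p}|\xi_1-\xi_2|$, the diffusion variation by $C_{T,p}|\sigma_1-\sigma_2|$ and the Hurst variation by $C_{T,p}|H_1-H_2|$. Throughout, I would use that under \ref{asmp:compact} and \ref{asmp:drift} the solution has moments bounded uniformly in $t\in[0,T]$ and in $\theta\in\Theta$; this is where the coercivity in \eqref{eq:drift-coerciv} and the finiteness of $\mathbb{E}\sup_{t\le T}|B^H_t|^p$, uniform over the compact $\mathcal{H}$, enter, and such bounds are available from Appendix~\ref{appendix-L}.

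For the drift parameter, fix $\sigma_1,H_1$ and set $D_t = Y_t^{(\xi_1,\sigma_1,H_1)} - Y_t^{(\xi_2,\sigma_1,H_1)}$. Since both processes are driven by the same path $\sigma_1 B^{H_1}$, the noise cancels and $D$ is absolutely continuous. Differentiating $\tfrac12|D_t|^2$ and splitting $b_{\xi_1}(Y^{\xi_1})-b_{\xi_2}(Y^{\xi_2}) = [b_{\xi_1}(Y^{\xi_1})-b_{\xi_1}(Y^{\xi_2})] + [b_{\xi_1}(Y^{\xi_2})-b_{\xi_2}(Y^{\xi_2})]$, the first bracket is handled by the contractivity $\langle b_{\xi_1}(x)-b_{\xi_1}(y),x-y\rangle\le-\beta|x-y|^2$, and the second by $\mathcal{C}^{1,1}$-regularity together with the growth bound \eqref{eq:drift-growth}, which give $|b_{\xi_1}(x)-b_{\xi_2}(x)|\le C(1+|x|^r)|\xi_1-\xi_2|$. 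After a Young inequality this yields $\tfrac{d}{dt}|D_t|^2 \le -\beta|D_t|^2 + C(1+|Y_t^{\xi_2}|^{2r})|\xi_1-\xi_2|^2$, and Gr\"onwall together with the uniform moment bounds gives $\|D_T\|_{L^p}\le C_{T,p}|\xi_1-\xi_2|$.

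For the diffusion parameter the same energy estimate applies, except that an additive term now survives: with $E_t = Y_t^{(\xi_2,\sigma_1,H_1)} - Y_t^{(\xi_2,\sigma_2,H_1)}$, one writes $E_t = R_t + (\sigma_1-\sigma_2)B_t^{H_1}$ where $R_t$ is the differentiable drift integral. Differentiating $\tfrac12|R_t|^2$, using $\langle E_t, b_\xi(Y^{\sigma_1}_t)-b_\xi(Y^{\sigma_2}_t)\rangle\le-\beta|E_t|^2$ for the leading part and the Lipschitz bound on the cross term involving $(\sigma_1-\sigma_2)B^{H_1}$, and applying Young, I get $\tfrac{d}{dt}|R_t|^2 \le \tfrac{K^2}{\beta}|\sigma_1-\sigma_2|^2|B_t^{H_1}|^2$. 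Bounding $|E_T|\le|R_T| + |\sigma_1-\sigma_2|\,|B_T^{H_1}|$ and taking $L^p$-norms then gives $\|E_T\|_{L^p}\le C_{T,p}|\sigma_1-\sigma_2|$, using the uniform fBm moment bounds. The Hurst variation $F_t = Y_t^{(\xi_2,\sigma_2,H_1)} - Y_t^{(\xi_2,\sigma_2,H_2)}$ is handled by the identical scheme, the additive perturbation now being $\sigma_2(B_t^{H_1}-B_t^{H_2})$; the whole estimate therefore reduces to controlling $\|\,\sup_{t\le T}|B_t^{H_1}-B_t^{H_2}|\,\|_{L^p}$.

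This last point is the crux and I expect it to be the main obstacle. Because $B^{H_1}$ and $B^{H_2}$ are built from the same $W$ through the Mandelbrot--Van Ness representation \eqref{eq:MVN}, their difference is a single Wiener integral whose $L^p$-norm can be estimated explicitly; the required Lipschitz-in-$H$ bound $\|\sup_{t\le T}|B_t^{H_1}-B_t^{H_2}|\|_{L^p}\le C_{T,p}|H_1-H_2|$, uniform over the compact $\mathcal{H}$, is precisely the type of Hurst-regularity result recalled in Appendix~\ref{app:regH} from the companion paper \cite{HRarxiv}. The difficulty is that the two noises have genuinely different path regularity, so the bound cannot come from a na\"ive pathwise comparison and instead relies on a careful analysis of the kernel in \eqref{eq:MVN}. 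Granting this input, combining the three one-parameter bounds through the triangle inequality yields $\|Y_T^{\theta_1}-Y_T^{\theta_2}\|_{L^p}\le C_{T,p}(|\xi_1-\xi_2|+|\sigma_1-\sigma_2|+|H_1-H_2|)\le C_{T,p}|\theta_1-\theta_2|$, which is the claim.
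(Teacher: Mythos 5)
Your proposal is correct and follows essentially the same route as the paper's proof: the same pivot decomposition into three one-parameter comparisons, pathwise Gr\"onwall/energy estimates for each (the paper simply cites \cite[Proposition 3.5]{panloup2020general} for the drift case instead of redoing the estimate), and the same key external input for the Hurst case, namely the Lipschitz-in-$H$ moment bound for $B^{H_1}-B^{H_2}$ built from a common $W$ via \eqref{eq:MVN}, taken from the companion paper \cite{HRarxiv}. The only cosmetic differences are that you exploit the coercivity constant $\beta$ where the paper uses the Lipschitz constant $K$ plus Gr\"onwall (so your constants avoid the factor $e^{K^2T^2}$), and you invoke a sup-in-time bound on $|B^{H_1}_t-B^{H_2}_t|$ where the paper only needs the pointwise-in-$t$ moment bound of \cite[Proposition 2.1]{HRarxiv}.
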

\begin{proof}
Without any loss of generality, we assume $p \ge 2$. Up to introducing pivot terms, we can consider three different cases:
\begin{align*}
\text{1)}~ \theta_1 = (\xi, \sigma, H_1) & ~\text{and}~ \theta_2 = (\xi, \sigma, H_2) \\ 
~\text{2)}~ \theta_1 = (\xi, \sigma_1, H) & ~ \text{and}~ \theta_2 = (\xi, \sigma_2, H) \\
  ~\text{3)}~ \theta_1 = (\xi_1, \sigma, H) & ~\text{and}~ \theta_2 = (\xi_2, \sigma, H) . 
\end{align*}
In the first case,  where only $H$ changes, we get from the definition of $Y_t^{\theta_1}$ and  $Y_t^{\theta_2}$ that for any $t \in [0,T]$,
\begin{align*}
Y_t^{\theta_1} - Y_t^{\theta_2} = \int_0^t (b_{\xi} (Y_t^{\theta_1} ) -b_{\xi} (Y_t^{\theta_2})) ds + \sigma (B_t^{H_1}- B_t^{H_2}) .
\end{align*}
Since $b$ is $K$-Lipschitz, we get
\begin{align*}
| Y_t^{\theta_1} - Y_t^{\theta_2} |^2 & \leq 2  \left( \int_0^t K | Y_t^{\theta_1} - Y_t^{\theta_2} |  d s \right)^2 + 2 | \sigma |^2 | B_t^{H_1}- B_t^{H_2}|^{2} .
\end{align*}
By Jensen's inequality, we have
\begin{align*}
| Y_t^{\theta_1} - Y_t^{\theta_2} |^2 & \leq 2 K^2 t \int_0^t  | Y_t^{\theta_1} - Y_t^{\theta_2} |^2  d s+ 2 | \sigma |^2 | B_t^{H_1}- B_t^{H_2}|^2 
\end{align*}
By Gr\"onwall's lemma, we deduce that
\begin{align*}
| Y_t^{\theta_1} - Y_t^{\theta_2} |^2 \leq 2 K^2 T \int_0^t  | \sigma |^2 | B_s^{H_1}- B_s^{H_2} |^2 e^{2 K^2 T (t-s)} d s+ 2 | \sigma |^2 | B_t^{H_1}- B_t^{H_2}|^2 .
\end{align*}
By Jensen's inequality, there exists a constant $C_p$ such that
\begin{align*}
| Y_t^{\theta_1} - Y_t^{\theta_2} |^p \leq C_p \left( 2^{p/2} K^{p} T^{p-1} \int_0^t  | \sigma |^p | B_s^{H_1}- B_s^{H_2} |^p e^{K^2 T p (t-s)} d s+ | \sigma |^p | B_t^{H_1}- B_t^{H_2}|^p \right) .
\end{align*}Since $B_t^{H_1}-B_t^{H_2}$ is a Gaussian random variable, $ \EE | B_t^{H_1}- B_t^{H_2}|^p $ is proportional to $( \EE | B_t^{H_1}- B_t^{H_2}|^2 )^{p/2}$. Using \cite[Proposition 2.1]{HRarxiv}, the fractional Brownian motion verifies
\begin{align*}
\EE | B_t^{H_1} - B_{t}^{H_2} |^p & \leq  C\, \big(t^{pH_1}\vee t^{pH_2}\big) \, (\log^2(t)+1)^{p/2} \,  |H_1-H_2|^p  .
\end{align*}
Therefore,
\begin{align*}
\EE| Y_t^{\theta_1} - Y_t^{\theta_2} |^p \leq C_p | \sigma |^p \left( 2^{p/2} K^{p} T^{p}  e^{K^2 T^2 p} + 1 \right) \left( T^{pH_1} \vee T^{pH_2} \right)  (\log^2(T)+1)^{p/2}  | H_1-H_2 |^p .
\end{align*}
Since $\sigma \in \Sigma$, we conclude that
\begin{align*}
\| Y_t^{\theta_1} - Y_t^{\theta_2} \|_{L^p} 
& \leq C_{p,\sigma,K} ( T  e^{K^2 T^2} + 1 ) ( 1+ T^{\max(\mathcal{H})} )  (\log^2(T)+1)^{1/2} | H_1-H_2 | . 
\end{align*}
In the second case, since $b$ is $K$-Lipschitz, using Jensen's inequality, we have
\begin{align*}
| Y_t^{\theta_1} - Y_t^{\theta_2} |^2 & =\left( \int_0^t [b_{\xi} (Y_t^{\theta_1} ) -b_{\xi} (Y_t^{\theta_2} )]ds + (\sigma_1-\sigma_2) B_t \right)^2 \\
& \leq 2 K^2  T \int_0^t  | Y_s^{\theta_1} - Y_t^{\theta_s}  |^2 ds + 2 | \sigma_1 - \sigma_2 |^2 | B_t |^2 .
\end{align*}
By Gr\"onwall's lemma, we get
\begin{align*}
| Y_t^{\theta_1} - Y_t^{\theta_2} |^2 \leq | \sigma_1 - \sigma_2 |^2 \left(  | B_t |^2 + 2 K^2 T \int_0^t | B_s |^2 e^{2 K^2 T (t-s)} ds \right)  .
\end{align*}
Therefore, by Jensen's inequality, there exists a constant $C_p$ such that
\begin{align*}
| Y_t^{\theta_1} - Y_t^{\theta_2} |^p \leq C_p | \sigma_1 - \sigma_2 |^p \left(  | B_t |^p + 2^{p/2} K^{p} T^{p-1} \int_0^t | B_s |^p e^{ K^2 Tp (t-s)} ds \right)  .
\end{align*}
It follows that
\begin{align*}
\| Y_t^{\theta_1} - Y_t^{\theta_2} \|_{L^p} & \leq C_p | \sigma_1 - \sigma_2 | ( T^{H} + T^{1+H} e^{K^2 T^2} )  \\
& \leq C_p | \sigma_1 - \sigma_2 | (1+T^{\max(\mathcal{H})}) ( T e^{K^2 T^2}+1 ) .
\end{align*}
Finally, in the third case, we have by \cite[Proposition 3.5]{panloup2020general} that $\| Y_t^{\theta_1} - Y_t^{\theta_2} \|_{L^p} \leq C_{T,p} | \xi_1-\xi_2 |$, 
where it appears from the proof of \cite[Proposition 3.5]{panloup2020general} that $C_{T,p}$ does not depend on $H$ or $\sigma$.
\end{proof}

\begin{lemma}\label{prop:general-results}
Assume \ref{asmp:compact} and \ref{asmp:drift} hold. Let $d$ be a distance in $\mathcal{D}_p$. Then there exists a constant $C>0$ such that for all $\theta \in \Theta$ and for all $t\ge 0$, we have
\begin{align}\label{eq:general-result}
d(\mathcal{L}(X_t^\theta),\mu_\theta)  \leq C e^{- \frac{1}{C} t}   .
\end{align}
\end{lemma}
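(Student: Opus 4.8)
The goal is to show that the law of the augmented process $X_t^\theta$ converges exponentially fast to its stationary law $\mu_\theta$, uniformly in $\theta \in \Theta$, in any distance $d \in \mathcal{D}_p$. Since $d$ is dominated by the $p$-Wasserstein distance $\mathcal{W}_p$, it suffices to prove the bound for $\mathcal{W}_p$ itself; and by the coupling characterisation of $\mathcal{W}_p$, it is enough to exhibit a single coupling of $X_t^\theta$ and $\bar{X}^\theta_t$ whose $L^p$-distance decays exponentially. The natural coupling is to drive both $Y^\theta$ and its stationary version $\bar{Y}^\theta$ by the \emph{same} fractional Brownian motion $B^H$, differing only in their initial conditions ($Y_0$ deterministic versus $\bar{Y}^\theta_0 \sim \nu_\theta$). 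My plan is therefore to reduce everything to a contraction estimate on $Y^\theta$ and then transfer it to $X^\theta$ via the third inequality in \eqref{eq:boundsXtoY}.

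First I would set up the pathwise estimate for $Y$. Writing $D_t = Y_t^\theta - \bar{Y}_t^\theta$, the noise terms cancel (same $\sigma$, same $B^H$), so $D$ solves the random ODE $\dot D_t = b_\xi(Y_t^\theta) - b_\xi(\bar Y_t^\theta)$ with $D_0 = Y_0 - \bar Y_0^\theta$. Differentiating $|D_t|^2$ and invoking the coercivity half of \ref{asmp:drift}(i),
\begin{align*}
\frac{d}{dt}|D_t|^2 = 2\langle b_\xi(Y_t^\theta) - b_\xi(\bar Y_t^\theta),\, D_t \rangle \leq -2\beta |D_t|^2 ,
\end{align*}
whence Gr\"onwall gives the deterministic pathwise contraction $|D_t|^2 \leq e^{-2\beta t}|D_0|^2$, i.e. $|Y_t^\theta - \bar Y_t^\theta| \leq e^{-\beta t}|Y_0 - \bar Y_0^\theta|$, with the crucial feature that $\beta$ does not depend on $\theta$. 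Taking $L^p$ norms yields $\|Y_t^\theta - \bar Y_t^\theta\|_{L^p} \leq e^{-\beta t}\|Y_0 - \bar Y_0^\theta\|_{L^p}$.

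Next I would propagate this to the augmented process. Applying the third bound in \eqref{eq:boundsXtoY} at the shifted times $\cdot + ih$ and using the contraction at each of the $q+1$ time points (the estimate at time $t+ih$ carries an extra factor $e^{-\beta(t+ih)} \leq e^{-\beta t}$), I get
\begin{align*}
\mathcal{W}_p(\mathcal{L}(X_t^\theta), \mu_\theta)^p \leq \EE|X_t^\theta - \bar X_t^\theta|^p \leq C_{p,q}\sum_{i=0}^{q} \EE|Y_{t+ih}^\theta - \bar Y_{t+ih}^\theta|^p \leq C_{p,q}\,(q+1)\, e^{-p\beta t}\, \sup_{i}\|Y_0 - \bar Y_{ih}^\theta\|_{L^p}^p .
\end{align*}
The remaining point is that the supremum on the right is bounded uniformly in $\theta$. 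This is where the tightness/moment bounds for the stationary solution $\bar Y^\theta$ are needed: under \ref{asmp:compact} and \ref{asmp:drift} one has $\sup_{\theta \in \Theta, s \geq 0} \EE|\bar Y_s^\theta|^p < \infty$ (the stationary laws $\nu_\theta$ have uniformly bounded $p$-th moments), which combined with the fixed initial condition $Y_0$ gives a finite constant independent of $\theta$. Setting $C = \max(\frac{1}{p\beta}, C_{p,q}(q+1)\sup_\theta\|Y_0 - \bar Y_0^\theta\|_{L^p}^p)$ and absorbing constants delivers \eqref{eq:general-result}.

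The main obstacle is the uniformity in $\theta$. The pathwise contraction rate $\beta$ is automatically uniform because it comes directly from \ref{asmp:drift}(i), but the prefactor requires a uniform-in-$\theta$ moment bound on the stationary measures $\nu_\theta$. I expect this to be supplied by the tightness results announced for Appendix~\ref{appendix-L}; the cleanest route is to establish $\sup_{\theta}\EE|\bar Y_0^\theta|^p < \infty$ once and for all, e.g. via a Lyapunov estimate $\frac{d}{dt}\EE|Y_t^\theta|^2 \leq -2\beta\EE|Y_t^\theta|^2 + C$ driven by the coercivity, and then pass to the stationary regime. A secondary subtlety is the $ih$-shift: one must ensure the driving fBm and its increments over $[t, t+qh]$ do not reintroduce $\theta$-dependence, but since both $Y^\theta$ and $\bar Y^\theta$ use the identical $B^H$ the shift only affects the contraction exponent, which is harmless.
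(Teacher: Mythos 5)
Your proposal matches the paper's proof essentially step for step: the same coupling of $Y^\theta$ and $\bar Y^\theta$ through a common driving fBm, the same coercivity-driven pathwise inequality $\frac{d}{dt}|Y_t^\theta-\bar Y_t^\theta|^2\le -2\beta|Y_t^\theta-\bar Y_t^\theta|^2$ (valid because the noise cancels in the difference), the reduction from $X^\theta$ to $Y^\theta$ via \eqref{eq:boundsXtoY}, and the uniform-in-$\theta$ moment bound of Proposition~\ref{prop:uniform-Lq-bounds}$(i)$ to control the prefactor. One caveat that does not affect validity (since you primarily invoke the appendix result, as the paper does): your suggested alternative Lyapunov derivation $\frac{d}{dt}\EE|Y_t^\theta|^2\le -2\beta\EE|Y_t^\theta|^2+C$ is not actually available for fBm-driven dynamics --- there is no It\^o formula making the drift--noise cross term vanish in expectation --- which is precisely why the paper's Proposition~\ref{prop:uniform-Lq-bounds} establishes those moments by comparison with the fractional Ornstein--Uhlenbeck process instead.
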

\begin{proof}
Since $d \in  \mathcal{D}_p$, it comes:
\begin{align*}
d(\mathcal{L}(X_t^\theta),\mu_\theta) & \leq \mathbb{E}(| X_t^\theta -  \bar{X}_t^\theta|^p)^\frac{1}{p} 
 \leq C \Biggl(  \sum_{i=0}^{q}  \mathbb{E}(|  Y_{t+ih}^\theta -  \bar{Y}_{t+ih}^\theta|^p)^\frac{1}{p} \Biggr) .
\end{align*}
Using \ref{asmp:drift}, 
\begin{align*}
\frac{d}{dt} | Y_t^\theta - \bar{Y}_t^\theta |^2 & = 2 \langle Y_t^\theta - \bar{Y}_t^\theta, b_\xi (Y_t^\theta)-b_\xi(\bar{Y}_t^\theta) \rangle 
 \leq -2 \beta | Y_t^\theta - \bar{Y}_t^\theta |^2 .
\end{align*}
It follows that $|Y_t^\theta - \bar{Y}_t^\theta |^2 \leq | Y_0^\theta - \bar{Y}_0^\theta |^2 e^{-2\beta t}$.
Hence for $p \ge 2$,
\begin{align}\label{eq:expBoundYYbar}
\|  Y_t^\theta -  \bar{Y}_t^\theta \|_{L^p} & \leq \| Y_0^\theta - \bar{Y}_0^\theta \|_{L^p}  e^{-\beta t}
 \leq ( \| Y_0^\theta \|_{L^p} + \| \bar{Y}_0^\theta \|_{L^p} )\,  e^{-\beta t} .
\end{align}
Moreover, by stationarity and Proposition \ref{prop:uniform-Lq-bounds}$(i)$, we have 
$$ \| \bar{Y}_0^\theta \|_{L^p} = \lim_{t \rightarrow \infty} \| Y_t^\theta \|_{L^p}  \leq \sup_{t \ge 1} \sup_{\theta \in \Theta} \| Y_t^\theta \|_{L^p} < \infty .$$
This concludes the proof.
\end{proof}

We can now state the main continuity result of this section.
\begin{proposition}\label{prop:continuity-distance-measures}
Assume \ref{asmp:compact} and \ref{asmp:drift} hold and let $d$ be a distance in $\mathcal{D}_p$. Then the mapping $\theta \mapsto d(\mu_\theta,\mu_{\theta_0})$ is continuous on $\Theta$.
\end{proposition}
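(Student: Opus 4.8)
The plan is to reduce the continuity of $\theta \mapsto d(\mu_\theta,\mu_{\theta_0})$ to the continuity of the map $\theta \mapsto \mu_\theta$ in the distance $d$. Indeed, since $d$ satisfies the triangle inequality, for any $\theta,\theta' \in \Theta$ one has the reverse triangle inequality
\[
|d(\mu_\theta,\mu_{\theta_0}) - d(\mu_{\theta'},\mu_{\theta_0})| \le d(\mu_\theta,\mu_{\theta'}),
\]
so it suffices to prove that $d(\mu_\theta,\mu_{\theta'}) \to 0$ as $\theta' \to \theta$.

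To control $d(\mu_\theta,\mu_{\theta'})$, I would insert the laws of the non-stationary processes $X_t^\theta$ and $X_t^{\theta'}$ at a common (large) time $t$ and split with the triangle inequality:
\[
d(\mu_\theta,\mu_{\theta'}) \le d(\mu_\theta,\mathcal{L}(X_t^\theta)) + d(\mathcal{L}(X_t^\theta),\mathcal{L}(X_t^{\theta'})) + d(\mathcal{L}(X_t^{\theta'}),\mu_{\theta'}).
\]
The two outer terms are each bounded by $Ce^{-t/C}$ using Lemma~\ref{prop:general-results}, and crucially this bound is \emph{uniform} in $\theta$. For the middle term, I would exploit that $d \in \mathcal{D}_p$ together with the canonical coupling in which $Y^\theta$ and $Y^{\theta'}$ are driven by the same underlying Brownian motion $W$ through the Mandelbrot--Van Ness representation \eqref{eq:MVN} and share the same initial condition; this particular coupling bounds the Wasserstein distance, giving $d(\mathcal{L}(X_t^\theta),\mathcal{L}(X_t^{\theta'})) \le (\mathbb{E}|X_t^\theta - X_t^{\theta'}|^p)^{1/p}$. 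Applying the bound \eqref{eq:boundsXtoY} and then Lemma~\ref{prop:continuity} at each of the finitely many times $t,t+h,\dots,t+qh$ yields $d(\mathcal{L}(X_t^\theta),\mathcal{L}(X_t^{\theta'})) \le C_{t,p,q}\,|\theta-\theta'|$ for a constant depending on $t$.

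Combining these estimates, for every $t\ge 0$ I obtain $d(\mu_\theta,\mu_{\theta'}) \le 2Ce^{-t/C} + C_{t,p,q}\,|\theta-\theta'|$, and the conclusion follows by an $\varepsilon$-argument in which the order of the choices matters: given $\varepsilon>0$, I would \emph{first} fix $t$ large enough that $2Ce^{-t/C} < \varepsilon/2$, and only \emph{then} choose $\delta$ so that $|\theta-\theta'|<\delta$ forces $C_{t,p,q}\,|\theta-\theta'| < \varepsilon/2$.

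The main obstacle is precisely that the Lipschitz constant $C_{T,p}$ produced by Lemma~\ref{prop:continuity} degenerates as $T\to\infty$ (it carries factors such as $T e^{K^2 T^2}$), so one cannot let $t\to\infty$ and $\theta'\to\theta$ simultaneously. The two-scale argument above circumvents this difficulty: the exponential ergodic decay, being uniform in the parameter, annihilates the outer terms first, after which the $\theta$-regularity is invoked only at a fixed finite horizon $t$.
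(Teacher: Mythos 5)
Your proposal is correct and follows essentially the same route as the paper's proof: the same triangle-inequality decomposition through $\mathcal{L}(X_t^{\theta})$ and $\mathcal{L}(X_t^{\theta'})$, the uniform exponential bound of Lemma~\ref{prop:general-results} for the two outer terms, the $L^p$-Lipschitz estimate of Lemma~\ref{prop:continuity} combined with \eqref{eq:boundsXtoY} for the middle term, and the choice of the horizon $t$ \emph{before} the choice of $\delta$. Your explicit observation that the Lipschitz constant from Lemma~\ref{prop:continuity} degenerates as $t\to\infty$, which forces this order of choices, is precisely the (implicit) point of the paper's two-step $\varepsilon$-argument.
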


\begin{proof}
Let now $\theta_1,\theta_2\in \Theta$. Then for $t \ge 0$,
\begin{align*}
  d(\mu_{\theta_1},\mu_{\theta_2}) \leq C \mathcal{W}_p(\mu_{\theta_1},\mu_{\theta_2}) &\leq C \mathcal{W}_p(\mu_{\theta_1},\mathcal{L}(X_t^{\theta_1})) + C \mathcal{W}_p(\mu_{\theta_2},\mathcal{L}(X_t^{\theta_2})) + C \| X_t^{\theta_1}-X_t^{\theta_2} \|_{L^p} \\
    &\leq 2 C \sup_{\theta \in \Theta}  \mathcal{W}_p(\mathcal{L}(X_t^{\theta}),\mu_{\theta}) +  C \| X_t^{\theta_1} - X_t^{\theta_2} \|_{L^p} .
\end{align*}
Let $\epsilon>0$. By Lemma \ref{prop:general-results} there exists $t_0$ such that
\begin{align*}
2 C \sup_{\theta \in \Theta} \mathcal{W}(\mathcal{L}(X_{t_0}^{\theta}),\mu_{\theta}) \leq  \frac{\epsilon}{2} .
\end{align*}
Now in view of \eqref{eq:boundsXtoY} and Lemma \ref{prop:continuity}, there exists a constant $C_{t_0,p}$ such that $\| X_{t_0}^{\theta_1} - X_{t_0}^{\theta_2} \|_{L^p}  \leq C_{t_{0},p} | \theta_1 - \theta_2|$. Let $\delta>0$ be such that $C_{t_0,p} \delta \leq \epsilon/2$. Then for $| \theta_1 - \theta_2 | \leq \delta$, we have
\begin{align*}
   d(\mu_{\theta_1},\mu_{\theta_2}) \leq \epsilon ,
\end{align*}
and this proves the continuity of $\theta \mapsto d(\mu_\theta, \mu_{\theta_0})$. 
\end{proof}

\subsection{Convergence of the contrast: proof of Lemma \ref{lem:contrast-conv}} \label{subsec:contrast_conv}
Let $\theta = (\xi,\sigma,H) \in \Theta$. Recall that the Prokhorov distance is defined for any $\mu, \nu \in \mathcal{M}_1(\mathbb{R}^d)$ as
\begin{align*}
d_{\text{P}}(\mu,\nu) = \inf \{ \varepsilon > 0,~ \mu(A) \leq \nu(A^\varepsilon)+\varepsilon ~\text{for any Borel set}~A \},
\end{align*}
where $A^\varepsilon$ is the $\varepsilon$-neighbourhood of $A$. Convergence in law is equivalent to convergence with respect to the Prokhorov distance.

We will first prove that almost surely, the random measure $\frac{1}{t} \int_0^t \delta_{X_s^\theta} ds$ converges in law to $\mu_{\theta}$. This implies that $\frac{1}{t} \int_0^t \delta_{X_s^\theta} ds$ converges to $\mu_{\theta}$ in the Prokhorov distance. To extend this result to distances $d$ in $\mathcal{D}_2$ (i.e dominated by the $2$-Wasserstein distance), we use the fact that the $2$-Wasserstein distance is dominated by the Prokorov distance $d_P$ as follows (see \cite[Theorem 2]{gibbs2002choosing}):
\begin{equation}\label{eq:boundProkhorov}
\begin{split}
& d\left(\frac{1}{t} \int_0^t \delta_{X_s^\theta} ds, \mu_{\theta}\right)  \\ &  \leq C_p \sup_{t \ge 0} \left( \max\left(\frac{1}{t} \int_0^t |X_s^\theta|^2 ds , \, \mathbb{E} |\bar{X}_t^\theta|^2\right) + 1 \right) \, d_P\left(\frac{1}{t}\int_0^t \delta_{X_s^\theta} ds, \mu_{\theta}\right) .
\end{split}
\end{equation}
By definition of the process $X^\theta$, we have that
\begin{align}\label{eq:comp-X-Y}
\max \left( \frac{1}{t} \int_0^t |X_s^\theta|^2 ds ,  \,\mathbb{E} |\bar{X}_t^\theta|^2 \right) \leq C_{q} \sum_{i=0}^{q}  \max \left( \frac{1}{t} \int_0^t |Y_{s+ih}^\theta|^2 ds , \, \mathbb{E} |\bar{Y}_{t+ih}^\theta|^2 \right) .
\end{align}
Therefore, we conclude thanks to Proposition \ref{prop:uniform-Lq-bounds} that in the present case, the convergence in law (i.e. in Prokhorov distance) implies the convergence for the $2$-Wasserstein distance. Let us now prove the convergence in law. The proof of the convergence in law follows the same steps as \cite[Proposition 3.3]{panloup2020general} and relies on a tightness argument. While we can show that the family $\{ \frac{1}{t} \int_0^t \delta_{X_s^\theta} ds \}_{t \ge 0}$ is tight, it is not easy to identify the limit points. That is why we consider a family of probability measures on the set of continuous functions for which the identification of the limit is easier, namely $\{ \pi_t^{\theta} = \frac{1}{t} \int_0^t \delta_{X_{s+.}^{\theta}} d s \}_{t \ge 0}$. The criterion from \cite[Corollary p.83]{Billingsley} ensures that $\{\pi_t^{\theta} ; t \geq 0 \}$ is a.s. tight if for every positive $T, \eta$ and $\varepsilon$, there exists $\delta>0$ such that for all $t_0 \in[0, T]$,
\begin{align*}
\limsup _{t \rightarrow+\infty} \frac{1}{t} \int_0^t \frac{1}{\delta} \mathds{1}_{\left\{\sup _{u \in\left[t_0, t_0+\delta\right]}\left|X^\theta_{s+u}-X^\theta_{s+t_0} \right| \geq \varepsilon\right\}} d s \leq \eta \quad ~\text {a.s.}
\end{align*}
Moreover, the above inequality holds true if there exist some positive $r$ and $\rho$ such that
\begin{align}\label{eq:tightness}
 \forall T>0, \ \exists \delta >0,~ & r>0,~ \rho>0 ~ \text{ s.t. } ~ \forall t_0 \in [0,T],  \nonumber \\
 & \limsup_{t\rightarrow  \infty} \frac{1}{t} \int_0^t \sup_{u \in [t_0, t_0 + \delta]} | X_{s+u}^{\theta} -X_{s+t_0}^{\theta} |^r d s \leq C_{r,T} \delta^{1+\rho}  \quad ~\text{a.s} .
\end{align}
For $T,r, \delta>0$, by definition of $X^\theta$ and \eqref{eq:boundsXtoY}, there is
\begin{align*}
& \limsup_{t\rightarrow  \infty} \frac{1}{t} \int_0^t \sup_{u \in [t_0, t_0 + \delta]} | X_{s+u}^{\theta} -X_{s+t_0}^{\theta} |^r d s \\ 
& \leq 
\sum_{i=0}^q \limsup_{t\rightarrow  \infty} \frac{1}{t} \int_0^t \sup_{u \in [t_0, t_0 + \delta]} | Y_{s+u+ih}^{\theta} -Y_{s+t_0+ih}^{\theta} |^r d s \\
& \leq \sum_{i=0}^q \limsup_{t\rightarrow  \infty} \frac{t+ih}{t} \frac{1}{t+ih} \int_0^{t+ih} \sup_{u \in [t_0, t_0 + \delta]} | Y_{s+u}^{\theta} -Y_{s+t_0}^{\theta} |^r d s \\
& \leq C_q \limsup_{t\rightarrow  \infty}  \max_{i \in \llbracket 0, q \rrbracket}\frac{1}{t+ih} \int_0^{t+ih} \sup_{u \in [t_0, t_0 + \delta]} | Y_{s+u}^{\theta} -Y_{s+t_0}^{\theta} |^r d s \\
& \leq C_q \limsup_{t\rightarrow  \infty}  \frac{1}{t} \int_0^{t} \sup_{u \in [t_0, t_0 + \delta]} | Y_{s+u}^{\theta} -Y_{s+t_0}^{\theta} |^r d s .
\end{align*}
By \cite[Eq A.19]{panloup2020general}, we can further bound the right-hand side above by $C \delta^{r-1}+C_r \delta^{Hr}$. Choosing $\delta < 1$ and $r > \max(2, \frac{1}{\min(\mathcal{H})})$, we get \eqref{eq:tightness}.

Hence, let $(t_n)_{n \ge 1}$ be an increasing sequence going to $+\infty$ such that $\{\frac{1}{t_n}  \int_0^{t_n}  \delta_{X_{s+.}^{\theta}} ds \}_{n \ge 1}$ converges (pathwise) to a probability measure $\gamma$. 
As in Appendix A.2 of \cite[Proposition 3.3]{panloup2020general}, we get that $\gamma$ is stationary.
 Let us now prove that $\gamma$ is the law of $\bar{X}^{\theta}$. A process ${x_t=(y_t,z^1_t,\dots,z^{q}_t)}$ has the law of $X^{\theta}$ if 
\begin{align*}
  &  y_\cdot  - y_0 - \int_0^. b_\xi(y_u) du \text{ has the law of $\sigma B$ where B has Hurst parameter $H$};\\
 &   z^i_\cdot - \ell^i \left( \int_0^\cdot b_\xi(y_u) du , \dots, \int_0^{.+ih} b_\xi(y_u) du \right) \text{ has the law of}~ \sigma \ell^i(B_\cdot,\dots,B_{\cdot+ih}) \\ & \quad \text{for all $i \in \llbracket 1,q \rrbracket$}.
\end{align*}
Let us define 
\begin{align*}
G(x_\cdot)  = \begin{pmatrix}
     y_\cdot  - y_0 - \int_0^. b_\xi(y_u) du  \\
     z^1_\cdot - \ell^1 \left( \int_0^\cdot  b_\xi(y_u) du, \int_0^{.+h} b_\xi(y_u) du \right) \\
     \vdots \\
     z^{q}_\cdot - \ell^q\left( \int_0^\cdot b_\xi(y_u) du , \dots, \int_0^{.+qh} b_\xi(y_u) du \right)
\end{pmatrix} 
\end{align*}
and
\begin{align*}
\mathbf{B}_\cdot & = \left( \sigma  B_{\cdot}, \dots, \sigma \ell^q(B_{\cdot},\dots,B_{qh+\cdot}) \right). 
\end{align*}
Hence we have to prove that $\gamma \circ G^{-1} ~\text{is the law of}~ \mathbf{B}_\cdot$. This follows the same lines as the end of Appendix A.2 of \cite{panloup2020general} and we omit the details. 

The same analysis presented in this Section still holds if we replace $\frac{1}{t} \int_0^t |X_s^\theta|^p ds $ by $\frac{1}{n} \sum_{k=0}^{n-1} | X_{kh}^\theta|^p $.  This is mostly due to the fact that in Proposition \ref{prop:uniform-Lq-bounds}, we also proved that the moments $\frac{1}{n} \sum_{k=0}^{n-1} | X_{kh}^\theta|^p $ are finite uniformly in $n$, and therefore the right-hand side in \eqref{eq:comp-X-Y} is finite even when the integral is replaced by a discrete sum.

\subsection{Proof of Theorem \ref{thm:strong_consist}}\label{subsec:consist}

Let $d$ be a distance that belongs to $\mathcal{D}_p$. We want to apply Proposition \ref{thm:convergence} to $v\equiv n$ and  
\begin{equation*}
L_n(\theta)=d\left( \frac{1}{n}\sum_{k=0}^{n-1} \delta_{X^{\theta_0}_{kh}}, \mu_{\theta} \right).
\end{equation*}
In view of Lemma \ref{lem:contrast-conv}, we know that for each $\theta$, $L_n(\theta)$ converges a.s. to $L(\theta)=d(\mu_{\theta_0}, \mu_{\theta})$. Besides, the continuity of $L(\theta)$ comes from Proposition~\ref{prop:continuity-distance-measures}. If we prove the uniform convergence, then we can finally apply Proposition \ref{thm:convergence} to get that the limit points of $\hat{\theta}_n$ are included in the set $\argmin\{L(\theta), \theta \in \Theta \}$, which under assumption \ref{asmp:identif} is reduced to $\{\theta_0\}$.

Now to prove the uniform convergence, it is sufficient to show that the family 
\begin{equation*}
\left\{\theta \mapsto d\left(\frac{1}{n}\sum_{k=0}^{n-1} \delta_{X^{\theta_0}_{kh}},\mu_{\theta}\right), n \ge 1, \ \theta \in \Theta\right\} 
\end{equation*}
is equicontinuous. Actually, for any $\theta_1$ and $\theta_2$ in $\Theta$, we have

\begin{align*}
    \left| d\left(\frac{1}{n}\sum_{k=0}^{n-1} \delta_{X^{\theta_0}_{kh}},\mu_{\theta_1} \right) - d\left( \frac{1}{n}\sum_{k=0}^{n-1} \delta_{X^{\theta_0}_{kh}},\mu_{\theta_2} \right) \right| \leq  d(\mu_{\theta_1},\mu_{\theta_2}) .
\end{align*}
In view of Proposition~\ref{prop:continuity-distance-measures}, the term on the right-hand side goes to $0$ as $| \theta_1 - \theta_2 | \rightarrow 0$. This proves the equicontinuity and thus the uniform convergence.

\subsection{Proof of Theorem \ref{thm:rate}}\label{sec:rateofconv}
Since \ref{asmp:strong_identif} implies \ref{asmp:identif} and $d_{CF,p} \in \mathcal{D}_1 \subset \mathcal{D}_2$, we can apply Theorem \ref{thm:strong_consist} to obtain the strong consistency. For the rest of this section, $d$ always refer to the distance $d_{CF,p}$. We recall that $X = X^{\theta_0}$ denotes the observed process with the true parameter $\theta_0$.
In view of the strong identifiability assumption \ref{asmp:strong_identif}, it suffices to bound $ \mathbb{E}d(\mu_{\hat{\theta}_n},\mu_{\theta_0})^\alpha$ 
to obtain a rate of convergence on $ \mathbb{E}|\hat{\theta}_n - \theta_0 |^2$.

Our strategy is in line with the Section 5 of \cite{panloup2020general}, with adaptations due to the estimation of $\sigma$ and $H$. It is based on the following decomposition: since $\hat{\theta}_n$ minimizes the function $\theta \mapsto d(\frac{1}{n} \sum_{k=0}^{n-1} \delta_{X_{kh}},\mu_{\theta_0})$, we have
\begin{align*}%
    d(\mu_{\hat{\theta}_n},\mu_{\theta_0}) & \leq d\left(\frac{1}{n} \sum_{k=0}^{n-1} \delta_{X_{kh}},\mu_{\theta_0}\right) + d\left(\frac{1}{n} \sum_{k=0}^{n-1} \delta_{X_{kh}},\mu_{\hat{\theta}_n}\right)\\
    & \leq 2 d\left(\frac{1}{n} \sum_{k=0}^{n-1} \delta_{X_{kh}},\mu_{\theta_0}\right)  \\
   & =: 2 D_n^{(1)}  .
\end{align*}

\paragraph{$L^\alpha(\Omega)$ bound on $D_n^{(1)}$.}
Following the proof of \cite[Section 5.1]{panloup2020general}, we obtain a bound on $D_n^{(1)}$.

\begin{lemma}\label{lem:bound-D_n^1}
Assume that \ref{asmp:strong_identif} holds with $p$ and $\alpha$ satisfying $p>\frac{\alpha+d(q+1)}{2}$. There exists a positive constant $C_{\alpha,q}$ such that for any $n \in \mathbb{N}$,
\begin{align*}%
    \mathbb{E}[ | D_n^{(1)} |^\alpha] \leq  C_{\alpha,q} \left(n^{-\alpha} +  n^{-\frac{\alpha}{2}(2 - 2 \max(\mathcal{H}) \vee 1)} \right)  ,
\end{align*}
where we recall that $q$ is the number of linear transformations added to construct the augmented process $X^{\theta_0}$.
\end{lemma}

\begin{proof}
Decompose $D_n^{(1)}$ as $D_n^{(1)} \leq D_n^{(11)} + D_n^{(12)}$ where
\begin{align*}%
  D_n^{(11)} & := d\left(\mu_{\theta_0}, \frac{1}{n} \sum_{k=0}^{n-1} \mathbb{E}[\delta_{X_{kh}}]\right), \\
    D_n^{(12)} & := d\left(\frac{1}{n} \sum_{k=0}^{n-1} \mathbb{E}[\delta_{X_{kh}}],\frac{1}{n} \sum_{k=0}^{n-1} \delta_{X_{kh}}\right) .
\end{align*}
The expectation of the random measure $\mathbb{E}[\delta_{X_t}]$ is understood as a deterministic measure given by $\mathbb{E}[\delta_{X_t}](f) = \mathbb{E}[f(X_t)]$ for any bounded measurable $f$. 

As in the proof of \cite[Lemma 5.3]{panloup2020general}, the bound on $D_n^{(11)}$ relies on the pathwise exponential convergence of $Y_{t}-\bar{Y}_{t}$ towards $0$, which implies the same exponential convergence of $X_{t}-\bar{X}_{t}$. For the Lipschitz function $f_\chi(x) = e^{i\langle \chi, x \rangle}$, following the aforementioned proof leads to
\begin{align}\label{eq:Dn11-comparison}
\left|\frac{1}{n} \sum_{k=0}^{n-1} \mathbb{E} f_{\chi}\left(X_{kh}\right) - \mu_{\theta_0}(f_{\chi})\right| 
&\leq \frac{1}{n}\|f_{\chi}\|_{Lip} \sum_{k=0}^{n-1} \mathbb{E}\left[\left|X_{kh} -\bar{X}_{kh} \right|\right] \leq \frac{C_{q}}{n}\|f_{\chi}\|_{Lip}
\end{align}
and then
\begin{align}
\mathbb{E} [| D_n^{(11)} |^\alpha ] \leq \frac{C_{\alpha,q}}{n^\alpha}  .\label{eq:bound-D_n^11}
\end{align}

Let us now bound $D_n^{(12)}$. This is a concentration result inspired by \cite[Theorem 2.3]{varvenne2019concentration}. Namely, apply \cite[Lemma 5.5]{panloup2020general} to $\frac{1}{n} \sum_{k=0}^{n-1} f_\chi(X_{kh})$ to get
\begin{align*}
 \mathbb{E} \left[ \left| \frac{1}{n}\sum_{k=0}^{n-1} f_\chi (X_{kh}) - \mathbb{E}(f_\chi (X_{kh})) \right|^\alpha \right] \leq C_\alpha\,  \left\| f_{\chi} \right\|_{Lip}^\alpha n^{-\frac{\alpha}{2}{(2-\max(2H,1))}},
\end{align*}
for some positive constant $C_\alpha$ that depends on $\alpha$.

Using the definition of $d_{CF,p}$ and Jensen's inequality, one gets
\begin{align*}
\mathbb{E}[| D_n^{(12)} |^\alpha] \leq \int_{\R^{d(q+1)}} \mathbb{E} \left[ \left| \frac{1}{n}\sum_{k=0}^{n-1} f_\chi (X_{kh}) - \mathbb{E}(f_\chi (X_{kh})) \right|^\alpha \right] g_p(\chi) d\chi .
\end{align*}
As $\left\| f_{\chi} \right\|_{Lip} \leq | \chi |$, there is $\mathbb{E}[| D_n^{(12)} |^\alpha] \leq C_\alpha n^{-\frac{\alpha}{2}(2-\max(2H,1))} \int_{\R^{d(q+1)}} | \chi |^\alpha g_p(\chi) d\chi$, the integral being finite since $p>\frac{\alpha+d(q+1)}{2}$. This bound with \eqref{eq:bound-D_n^11} yield the result. 
\end{proof}

\section{Strong consistency and rate of convergence of the estimator $\hat{\theta}_{n,N,\gamma}$}\label{sec:practical}

In this section, the results on the estimator $\hat{\theta}_{n,N,\gamma}$ are proven: first, $\hat{\theta}_{n,N,\gamma}$ is shown to be strongly consistent (Theorem~\ref{thm:strong_consist2}), then the rate presented in Theorem~\ref{thm:rate-2} is established under the strong identifiability assumption \ref{asmp:strong_identif}.

\subsection{Proof of Theorem \ref{thm:strong_consist2} and Theorem \ref{thm:rate-2}}\label{subsec:inv_results}

%
First, using intermediary results that we shall prove in Section \ref{subsec:contrast_conv2} and Section \ref{sec:bounds-rate}, we provide a proof of the strong consistency and a rate of convergence for the estimator $\hat{\theta}_{n,N,\gamma}$ defined in \eqref{eq:thetan-02}.
\begin{proof}[Proof of Theorem \ref{thm:strong_consist2}]
We use again Proposition \ref{thm:convergence} with 
$$L_v(\theta) = d\left(\frac{1}{n} \sum_{k=0}^{n-1} \delta_{X_{k h}^{\theta_{0}}}, \frac{1}{N} \sum_{k=0}^{N-1} \delta_{X^{\theta,\gamma}_{k \gamma}}\right),$$
this time with $v=(n,N,\gamma)$. We will prove in Section \ref{subsec:contrast_conv2} that the contrast $L_v(\theta)$ converges uniformly as $(n,N,\gamma) \rightarrow (\infty, \infty, 0)$ to $L(\theta) = d(\mu_{\theta},\mu_{\theta_0})$, by first proving pointwise convergence and then using an equicontinuity argument. Since $L(\theta)$ is the same as in Section \ref{sec:consist}, we have by Proposition~\ref{prop:continuity-distance-measures} that $L(\theta)$ is continuous. Then we apply Proposition \ref{thm:convergence} to conclude. 
\end{proof}

\begin{proof}[Proof of Theorem \ref{thm:rate-2}]
Since \ref{asmp:strong_identif} implies \ref{asmp:identif} and $d_{CF,p} \in \mathcal{D}_1 \subset \mathcal{D}_2$, we can apply Theorem \ref{thm:strong_consist2} to obtain the strong consistency. To prove the convergence above, we proceed similarly to Section \ref{sec:rateofconv}. We decompose the term $\mathbb{E}d(\mu_{\hat{\theta}_{n,N, \gamma}},\mu_{\theta_0})^\alpha$ slightly differently. First we use the triangle inequality to get 
\begin{align*}
    d\left(\mu_{\theta_{0}}, \mu_{\hat{\theta}_{n,N, \gamma}}\right) & \leq d\left(\mu_{\theta_{0}}, \frac{1}{n} \sum_{k=0}^{n-1} \delta_{X^{\theta_0}_{kh}}\right)+d\left(\frac{1}{n} \sum_{k=0}^{n-1} \delta_{X^{\theta_0}_{kh}}, \frac{1}{N} \sum_{k=0}^{N-1} \delta_{X_{k \gamma}^{\hat{\theta}_{n,N,\gamma}, \gamma}}\right) \\ 
    & \quad + d\left(\frac{1}{N} \sum_{k=0}^{N-1} \delta_{X_{k \gamma}^{\hat{\theta}_{n,N,\gamma}, \gamma}}\, ,\, \mu_{\hat{\theta}_{n,N,\gamma}}\right) .
\end{align*}
Now, since $\hat{\theta}_{n,N,\gamma}$ minimizes the function  $\theta \mapsto d\left(\frac{1}{n} \sum_{k=0}^{n-1} \delta_{X^{\theta_0}_{k h}}, \frac{1}{N} \sum_{k=0}^{N-1} \delta_{X^{\theta,\gamma}_{k \gamma}}\right)$, we can further bound $d(\mu_{\theta_{0}}, \mu_{\hat{\theta}_{n,N, \gamma}})$ as
\begin{align}\label{eq:bound-d1}
    d \left(\mu_{\theta_{0}}, \mu_{\hat{\theta}_{n,N, \gamma}}\right) & \leq d\left(\mu_{\theta_{0}}, \frac{1}{n} \sum_{k=0}^{n-1} \delta_{X^{\theta_0}_{kh}}\right)+d\left(\frac{1}{n} \sum_{k=0}^{n-1} \delta_{X^{\theta_0}_{kh}}, \frac{1}{N} \sum_{k=0}^{N-1} \delta_{X_{k \gamma}^{\theta_0, \gamma}}\right) \nonumber \\ & \quad + \sup_{\theta \in \Theta} d\left(\frac{1}{N} \sum_{k=0}^{N-1} \delta_{X^{\theta,\gamma}_{k \gamma}}, \mu_{\theta}\right) .
\end{align}
To allow pathwise comparison, let us define the following processes. For any $\theta \in \Theta$, define $Y^{\theta,\gamma}_\cdot(B)$, an Euler scheme of $Y^\theta$ defined with the same fBm $B$. Namely, $Y^{\theta,\gamma}_\cdot(B)$ is defined by \eqref{eq:euler-scheme} where $\hatB$ is replaced by $B$. As in Section \ref{subsec:not-assump}, define $X^{\theta,\gamma}(B)$ by
\begin{align*}
   X^{\theta,\gamma}(B) = \left(Y^{\theta,\gamma}_\cdot(B), \ell^{1} (Y^{\theta,\gamma}_\cdot(B), Y^{\theta,\gamma}_{\cdot+h}(B)),\dots,\ell^q( Y^{\theta,\gamma}_\cdot(B),\dots, Y^{\theta,\gamma}_{\cdot+qh}(B))\right) .
\end{align*}
We also define $Y(\hatB)$ which is the solution to \eqref{eq:fsde} with the fBm $\hatB$, and similarly we define $X(\hatB)$. Now, we can do pathwise comparison between $X^\theta$ and $X^{\theta,\gamma}(B)$, and between $X^{\theta,\gamma}$ and $X^{\theta}(\hatB)$.
\paragraph{Bounding the second term in \eqref{eq:bound-d1}.}
Split the second term in the right-hand side of \eqref{eq:bound-d1} as follows
\begin{align}\label{eq:bound-d2}
\begin{split}
 d\left(\frac{1}{n} \sum_{k=0}^{n-1} \delta_{X^{\theta_0}_{kh}}, \frac{1}{N} \sum_{k=0}^{N-1} \delta_{X_{k \gamma}^{\theta_{0}, \gamma}}\right) & \leq d\left(\frac{1}{n} \sum_{k=0}^{n-1} \delta_{X^{\theta_0}_{kh}}, \mu_{\theta_{0}}\right) +d\left(\mu_{\theta_{0}}, \frac{1}{N} \sum_{k=0}^{N-1} \delta_{X_{k \gamma}^{\theta_{0}, \gamma}}\right)   \\
&\quad +d\left(\frac{1}{N} \sum_{k=0}^{N-1} \delta_{X_{k \gamma}^{\theta_{0}}}, \frac{1}{N} \sum_{k=0}^{N-1} \delta_{X_{k \gamma}^{\theta_{0}, \gamma}}\right) .
\end{split}
\end{align}
Furthermore, split the last term above as
\begin{align*}
d\left(\frac{1}{N} \sum_{k=0}^{N-1} \delta_{X_{k \gamma}^{\theta_{0}}}, \frac{1}{N} \sum_{k=0}^{N-1} \delta_{X_{k \gamma}^{\theta_{0}, \gamma}}\right)
& \leq d\left(\frac{1}{N} \sum_{k=0}^{N-1} \delta_{X_{k \gamma}^{\theta_{0}}}, \frac{1}{N} \sum_{k=0}^{N-1} \delta_{X_{k \gamma}^{\theta_0,\gamma}(B)} \right) \\ 
& \quad + d \left( \frac{1}{N} \sum_{k=0}^{N-1} \delta_{X_{k \gamma}^{\theta_0,\gamma}(B)}, \mu_{\theta_0} \right) + d \left( \frac{1}{N} \sum_{k=0}^{N-1} \delta_{X^{\theta_0, \gamma}_{k \gamma}} , \mu_{\theta_0}\right) . 
\end{align*}
Moreover,
\begin{align*}
& d\left(\frac{1}{N} \sum_{k=0}^{N-1} \delta_{X_{k \gamma}^{\theta_{0}}}, \frac{1}{N} \sum_{k=0}^{N-1} \delta_{X_{k \gamma}^{\theta_{0}, \gamma}}\right) \\ & \leq  2d\left(\frac{1}{N} \sum_{k=0}^{N-1} \delta_{X_{k \gamma}^{\theta_{0}}}, \frac{1}{N} \sum_{k=0}^{N-1} \delta_{X_{k \gamma}^{\theta_0,\gamma}(B)} \right)  + d \left( \frac{1}{N} \sum_{k=0}^{N-1} \delta_{X_{k \gamma}^{\theta_{0}}}, \mu_{\theta_0} \right) \\ & \quad  + d \left( \frac{1}{N} \sum_{k=0}^{N-1} \delta_{X^{\theta_0,\gamma}_{k \gamma}} , \frac{1}{N} \sum_{k=0}^{N-1} \delta_{X_{k \gamma}^{\theta_0}(\hatB)} \right)  + d \left(\frac{1}{N} \sum_{k=0}^{N-1} \delta_{X_{k \gamma}^{\theta_0}(\hatB)}, \mu_{\theta_0}\right) .
\end{align*}
Injecting the above bound into \eqref{eq:bound-d2}, we get
\begin{align}\label{eq:bound-d22}
\begin{split}
 & d\left(\frac{1}{n} \sum_{k=0}^{n-1} \delta_{X^{\theta_0}_{kh}}, \frac{1}{N} \sum_{k=0}^{N-1} \delta_{L_{k \gamma}^{\theta_{0}}}\right) \\ & \leq d\left(\frac{1}{n} \sum_{k=0}^{n-1} \delta_{X^{\theta_0}_{kh}}, \mu_{\theta_{0}}\right) + 2d\left(\mu_{\theta_{0}}, \frac{1}{N} \sum_{k=0}^{N-1} \delta_{X_{k \gamma}^{\theta_{0}}}\right)  +  d \left(\frac{1}{N} \sum_{k=0}^{N-1} \delta_{X_{k \gamma}^{\theta_0}(\hatB)}, \mu_{\theta_0}\right)    \\
&+ 2d\left(\frac{1}{N} \sum_{k=0}^{N-1} \delta_{X_{k \gamma}^{\theta_{0}}}, \frac{1}{N} \sum_{k=0}^{N-1} \delta_{X_{k \gamma}^{\theta_0,\gamma}(B)} \right)+d \left( \frac{1}{N} \sum_{k=0}^{N-1} \delta_{X^{\theta_0,\gamma}_{k \gamma}} , \frac{1}{N} \sum_{k=0}^{N-1} \delta_{X_{k \gamma}^{\theta_0}(\hatB)} \right)    .
\end{split}
\end{align}
\paragraph{Bounding the third term in \eqref{eq:bound-d1}.}
Split the third term in \eqref{eq:bound-d1} as follows
\begin{align}\label{eq:bound-d3}
\begin{split}
& \sup_{\theta \in \Theta} d\left(\frac{1}{N} \sum_{k=0}^{N-1} \delta_{X^{\theta,\gamma}_{k \gamma}}, \mu_{\theta}\right) \\ & \leq  \sup_{\theta \in \Theta} d\left(\frac{1}{N} \sum_{k=0}^{N-1} \delta_{X^{\theta,\gamma}_{k \gamma}}, \frac{1}{N} \sum_{k=0}^{N-1} \delta_{X_{k \gamma}^{\theta_0}(\hatB)} \right)  +  \sup_{\theta \in \Theta} d\left(\frac{1}{N} \sum_{k=0}^{N-1} \delta_{X_{k \gamma}^{\theta_0}(\hatB)}, \mu_{\theta}\right) .
\end{split} 
\end{align} 

\paragraph{Final bound on $d \left(\mu_{\theta_{0}}, \mu_{\hat{\theta}_{n,N, \gamma}}\right) $.} 
Using \eqref{eq:bound-d22} and \eqref{eq:bound-d3} in \eqref{eq:bound-d1}, we get
\begin{align}\label{eq:decomp-d}
& d \left(\mu_{\theta_{0}}, \mu_{\hat{\theta}_{n,N, \gamma}}\right)  \nonumber \\ & \leq 2 d\left(\frac{1}{n} \sum_{k=0}^{n-1} \delta_{X^{\theta_0}_{kh}}, \mu_{\theta_{0}}\right)   + 2d\left( \frac{1}{N} \sum_{k=0}^{N-1} \delta_{X_{k \gamma}^{\theta_{0}}}, \mu_{\theta_{0}} \right)  +  d \left(\frac{1}{N} \sum_{k=0}^{N-1} \delta_{X_{k \gamma}^{\theta_0}(\hatB)}, \mu_{\theta_0}\right) \nonumber  \\
& \quad + \sup_{\theta \in \Theta} d\left(\frac{1}{N} \sum_{k=0}^{N-1} \delta_{X_{k \gamma}^{\theta}(\hatB)}, \mu_{\theta}\right)  \nonumber \\
& \quad +2 \sup_{\theta \in \Theta}  d\left(\frac{1}{N} \sum_{k=0}^{N-1} \delta_{X_{k \gamma}^{\theta}}, \frac{1}{N} \sum_{k=0}^{N-1} \delta_{X_{k \gamma}^{\theta_0,\gamma}(B)} \right)+ 2 \sup_{\theta \in \Theta} d \left( \frac{1}{N} \sum_{k=0}^{N-1} \delta_{X^{\theta,\gamma}_{k \gamma}} , \frac{1}{N} \sum_{k=0}^{N-1} \delta_{X_{k \gamma}^{\theta}(\hatB)} \right)   .
\end{align}
The first three terms on the right-hand side can be bounded exactly as the term $D_n^{(11)}$ in the proof of Lemma \ref{lem:bound-D_n^1}, one thus gets
\begin{align}
d\left(\frac{1}{n} \sum_{k=0}^{n-1} \delta_{X^{\theta_0}_{kh}}, \mu_{\theta_{0}}\right) & \leq  C_{\alpha,q} \left(n^{-\alpha} + n^{-\frac{\alpha}{2}(2 - 2 \max(\mathcal{H}) \vee 1)}\right)   \label{eq:1st-term}  \\
d\left( \frac{1}{N} \sum_{k=0}^{N-1} \delta_{X_{k \gamma}^{\theta_{0}}}, \mu_{\theta_{0}} \right)  & \leq C_{\alpha,q} \left(N^{-\alpha} +  N^{-\frac{\alpha}{2}(2 - 2 \max(\mathcal{H}) \vee 1)}\right)  \label{eq:2nd-term} \\ 
 d \left(\frac{1}{N} \sum_{k=0}^{N-1} \delta_{X_{k \gamma}^{\theta_0}(\hatB)}, \mu_{\theta}\right) & \leq  C_{\alpha,q} \left(N^{-\alpha} +  N^{-\frac{\alpha}{2}(2 - 2 \max(\mathcal{H}) \vee 1)}\right) \label{eq:3d-term}.
\end{align}
\begin{remark}
For the term $d \left(\frac{1}{N} \sum_{k=0}^{N-1} \delta_{X_{k \gamma}^{\theta_0}(\hatB)}, \mu_{\theta_0}\right)$, notice that $\mu_{\theta_0}$ is also the law of $\bar{X}^{\theta_0, \hat{B}}$, the stationary augmented process associated to \eqref{eq:fsde0} with the fBm $\hat{B}$ instead of $B$, so \eqref{eq:Dn11-comparison} in the proof of Lemma \ref{lem:bound-D_n^1} still holds since we compare two solutions with the same noise, and therefore we know that they converge exponentially to each other as $t\rightarrow \infty$ by Proposition \ref{prop:general-results}.
\end{remark}
Let us define 
\begin{equation}\label{eq:decom-D1-2-3}
\begin{split}
D_{N, \gamma}^{(21)}(\theta) & := d\left(\frac{1}{N} \sum_{k=0}^{N-1} \delta_{X_{k \gamma}^{\theta}}, \frac{1}{N} \sum_{k=0}^{N-1} \delta_{X_{k \gamma}^{\theta,\gamma}(B)} \right)   \\
D_{N, \gamma}^{(22)}(\theta) & := d \left( \frac{1}{N} \sum_{k=0}^{N-1} \delta_{X^{\theta,\gamma}_{k \gamma}} , \frac{1}{N} \sum_{k=0}^{N-1} \delta_{X_{k \gamma}^{\theta}(\hatB)} \right)    \\
D_{N, \gamma}^{(3)}(\theta) & := d\left(\frac{1}{N} \sum_{k=0}^{N-1} \delta_{X_{k \gamma}^{\theta}(\hatB)}, \mu_{\theta}\right)  .
\end{split}
\end{equation}
In Section \ref{sec:bounds-rate}, we show how to bound the moments of  $\sup_{\theta \in \Theta} D_{N,\gamma}^{(21)}(\theta),\ \sup_{\theta \in \Theta} D_{N,\gamma}^{(22)}(\theta)$ and $\sup_{\theta \in \Theta}  D_{N, \gamma}^{(3)}(\theta)$. Namely, we prove that for any $\varepsilon < \alpha\min(\mathcal{H})$ and any $\varpi\in (0,1)$, there exist constants $C_{\alpha,\varepsilon}$ and $C_{\alpha,\varepsilon,\varpi}$ such that for any $N \ge 1$ and $\gamma\leq\gamma_{0}$ with $N \gamma \ge 1$, the following bounds hold:
\begin{align}
    \mathbb{E}\sup_{\theta \in \Theta} \left|D_{N,\gamma}^{(21)}(\theta)\right|^{\alpha} & \leq C_{\alpha, \varepsilon}  \gamma^{\alpha \min(\mathcal{H})-\varepsilon}  \label{eq:bounds-rate1}\\
    \mathbb{E}\sup _{\theta \in \Theta}\left|D_{N, \gamma}^{(22)}(\theta)\right|^{\alpha} & \leq C_{\alpha, \varepsilon}  \gamma^{\alpha \min(\mathcal{H})-\varepsilon} \label{eq:bounds-rate2} \\
    \mathbb{E}\sup _{\theta \in \Theta} \left| D_{N, \gamma}^{(3)}(\theta) \right|^{\alpha} & \leq C_{\alpha, \varepsilon, \varpi} \left(\gamma^{\alpha \min(\mathcal{H}) -\varepsilon}+ (N \gamma)^{-\bar{\eta}}\right)  \label{eq:bounds-rate3},
\end{align}
with $\bar{\eta}=\frac{\varpi \alpha^{2}}{2(\alpha \varpi+2d)}(2-(2 \max(\mathcal{H}) \vee 1))$. Injecting the bounds \eqref{eq:1st-term}, \eqref{eq:2nd-term}, \eqref{eq:3d-term}, \eqref{eq:bounds-rate1}, \eqref{eq:bounds-rate2} and \eqref{eq:bounds-rate3} into the decomposition \eqref{eq:decomp-d} concludes the proof.
\end{proof}

\subsection{Proof of the uniform convergence of the contrast}\label{subsec:contrast_conv2}

In this section, we obtain the uniform convergence of the contrast 
$$(n,N,\gamma) \mapsto d\left(\frac{1}{n} \sum_{k=0}^{n-1} \delta_{X_{k h}^{\theta_{0}}}, \frac{1}{N} \sum_{k=0}^{N-1} \delta_{X^{\theta,\gamma}_{k \gamma}}\right)$$
 towards $d(\mu_{\theta_{0}},\mu_{\theta})$, that is used in the proof of Theorem~\ref{thm:strong_consist2}. First we prove that almost surely, there is convergence as $(n,N,\gamma) \rightarrow (\infty, \infty, 0)$ for each fixed $\theta$. We have already proven in Section \ref{subsec:contrast_conv} that $d(\frac{1}{n} \sum_{k=0}^{n-1} \delta_{X_{k h}^{\theta_{0}}}, \mu_{\theta})$ converges to $d(\mu_{\theta_0}, \mu_{\theta})$ as $n$ goes to infinity. By Proposition \ref{prop:dconv-euler}, $d(\frac{1}{N} \sum_{k=0}^{N-1} \delta_{X^{\theta,\gamma}_{k \gamma}}, \mu_{\theta}^\gamma)$ converges to $0$ as $N \rightarrow \infty$. Finally, we prove in Proposition \ref{prop:conv_contrast}$(i)$ that $d(\mu_{\theta}, \mu_{\theta}^\gamma)$ converges to $0$ as $\gamma \rightarrow 0$. Therefore 
\begin{align*}
d\left(\frac{1}{n} \sum_{k=0}^{n-1} \delta_{X_{k h}^{\theta_{0}}}, \frac{1}{N} \sum_{k=0}^{N-1} \delta_{X^{\theta,\gamma}_{k \gamma}}\right) \underset{\substack{(n,N,\gamma) \rightarrow (\infty, \infty, 0)}}{\longrightarrow}   d(\mu_{\theta_{0}},\mu_{\theta}) .
\end{align*}

We extend the convergence result to a uniform convergence in $\theta$ in the following Proposition.

\begin{prop}\label{prop:conv_contrast}
Let $0 < p \leq 2$ and $d \in \mathcal{D}_p$. Under the assumptions \ref{asmp:compact}, \ref{asmp:drift} and \ref{asmp:identif}, there exists $\gamma_0>0$ such that for all $\gamma \in (0,\gamma_0]$, the following assertions hold true.
\begin{itemize}
    \item[(i)] $\displaystyle\lim_{\gamma \rightarrow 0} \sup_{\theta \in \Theta} d(\mu_{\theta},\mu_{\theta}^\gamma) =0$.
    \item[(ii)] $\displaystyle\lim_{N \rightarrow \infty} \sup_{\theta \in \Theta} d\left(\frac{1}{N} \sum_{k=0}^{N-1}  \delta_{X_{k\gamma}^{\theta,\gamma}},  \mu_\theta^\gamma \right)=0$.
    \item[(iii)] $\displaystyle\lim_{\gamma \rightarrow 0} \lim_{n,N \rightarrow \infty} \sup_{\theta \in \Theta} \left| d\left(\frac{1}{n} \sum_{k=0}^{n-1}  \delta_{X_{kh}^{\theta_0}}, \frac{1}{N} \sum_{k=0}^{N-1}  \delta_{X_{k\gamma}^{\theta,\gamma}}\right) - d(\mu_{\theta_0}, \mu_{\theta}) \right| = 0$. 
\end{itemize}
\end{prop}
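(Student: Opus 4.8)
The plan is to establish the two substantive statements (i) and (ii) directly and then to deduce (iii) from them together with Lemma~\ref{lem:contrast-conv}. The structural input that makes everything uniform in $\theta$ is that the coercivity constant $\beta$ in \ref{asmp:drift} does not depend on $\theta$, which yields exponential ergodicity uniform in $\theta\in\Theta$ for both \eqref{eq:fsde} and its Euler scheme. For (i), I would insert the laws of $X_t^\theta$ and $X_t^{\theta,\gamma}$ and split, for any $t\ge 0$,
\begin{align*}
d(\mu_\theta,\mu_\theta^\gamma) \leq d\big(\mu_\theta,\mathcal{L}(X_t^\theta)\big) + d\big(\mathcal{L}(X_t^\theta),\mathcal{L}(X_t^{\theta,\gamma})\big) + d\big(\mathcal{L}(X_t^{\theta,\gamma}),\mu_\theta^\gamma\big).
\end{align*}
Since $d\in\mathcal{D}_p\subset\mathcal{D}_2$, Lemma~\ref{prop:general-results} bounds the first term by $Ce^{-t/C}$ uniformly in $\theta$, and the discrete analogue of that lemma (contractivity of the scheme for small $\gamma$, cf. \cite[Proposition~3.4]{panloup2020general}) bounds the third term by $Ce^{-t/C}$ uniformly in $\theta$ and $\gamma$. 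Given $\varepsilon>0$, fix $t_\varepsilon$ so that these two are each $\le\varepsilon/3$; then, coupling the SDE and its scheme with the same fBm $B$, the bound \eqref{eq:boundsXtoY} gives
\begin{align*}
d\big(\mathcal{L}(X_{t_\varepsilon}^\theta),\mathcal{L}(X_{t_\varepsilon}^{\theta,\gamma})\big) \leq C\sum_{i=0}^q \big\|Y_{t_\varepsilon+ih}^\theta - Y_{t_\varepsilon+ih}^{\theta,\gamma}(B)\big\|_{L^p},
\end{align*}
i.e. the strong error of the scheme on the compact interval $[0,t_\varepsilon+qh]$, which tends to $0$ as $\gamma\to0$ uniformly in $\theta$ by the strong-convergence estimates of Appendix~\ref{appendix-L}.

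For (ii), I would mirror the proof of Theorem~\ref{thm:strong_consist}: Proposition~\ref{prop:dconv-euler} yields the convergence for each fixed $\theta$, and this is promoted to uniform convergence on the compact set $\Theta$ by equicontinuity. For $\theta_1,\theta_2\in\Theta$,
\begin{align*}
\Bigg| d\Big(\frac{1}{N}\sum_{k}\delta_{X_{k\gamma}^{\theta_1,\gamma}},\mu_{\theta_1}^\gamma\Big) - d\Big(\frac{1}{N}\sum_{k}\delta_{X_{k\gamma}^{\theta_2,\gamma}},\mu_{\theta_2}^\gamma\Big)\Bigg| \leq d\Big(\frac{1}{N}\sum_{k}\delta_{X_{k\gamma}^{\theta_1,\gamma}},\frac{1}{N}\sum_{k}\delta_{X_{k\gamma}^{\theta_2,\gamma}}\Big) + d(\mu_{\theta_1}^\gamma,\mu_{\theta_2}^\gamma).
\end{align*}
Using the diagonal coupling (matching indices $k$) with \eqref{eq:boundsXtoY}, the first term on the right is at most $\big(\frac{1}{N}\sum_k |X_{k\gamma}^{\theta_1,\gamma}-X_{k\gamma}^{\theta_2,\gamma}|^p\big)^{1/p}$, which I would control through a uniform-in-$k$ (hence uniform-in-$N$) Lipschitz estimate $\sup_k\|Y_{k\gamma}^{\theta_1,\gamma}-Y_{k\gamma}^{\theta_2,\gamma}\|\le C|\theta_1-\theta_2|$, the Euler-scheme analogue of Lemma~\ref{prop:continuity}; the second term is handled by the continuity of $\theta\mapsto\mu_\theta^\gamma$ obtained as in Proposition~\ref{prop:continuity-distance-measures}. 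The main obstacle is exactly this uniform-in-$N$ (equivalently uniform-in-time) Lipschitz control of the scheme in $\theta$, together with the uniform-in-$\theta$ strong error of the middle term in (i): both require that the contractivity prevent the time-discretization and $\theta$-perturbation errors from accumulating along the trajectory, and both are imported from Appendix~\ref{appendix-L} and \cite{panloup2020general}.

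Finally, for (iii), I would use the elementary inequality $|d(a,b)-d(c,e)|\le d(a,c)+d(b,e)$ with $a=\frac{1}{n}\sum_k\delta_{X_{kh}^{\theta_0}}$, $c=\mu_{\theta_0}$, $b=\frac{1}{N}\sum_k\delta_{X_{k\gamma}^{\theta,\gamma}}$, $e=\mu_\theta$, to get
\begin{align*}
\sup_{\theta}\Big| d\Big(\frac{1}{n}\sum_k \delta_{X_{kh}^{\theta_0}}, \frac{1}{N}\sum_k\delta_{X_{k\gamma}^{\theta,\gamma}}\Big) - d(\mu_{\theta_0},\mu_\theta)\Big| \leq d\Big(\frac{1}{n}\sum_k\delta_{X_{kh}^{\theta_0}},\mu_{\theta_0}\Big) + \sup_\theta d\Big(\frac{1}{N}\sum_k\delta_{X_{k\gamma}^{\theta,\gamma}},\mu_\theta^\gamma\Big) + \sup_\theta d(\mu_\theta^\gamma,\mu_\theta).
\end{align*}
Letting $n,N\to\infty$ (with $\gamma$ fixed), the first term $\to0$ a.s. by Lemma~\ref{lem:contrast-conv} (valid since $d\in\mathcal{D}_2$) and the second $\to0$ by (ii), while the third is unchanged; letting then $\gamma\to0$ the third $\to0$ by (i). This is precisely the iterated limit $\lim_{\gamma\to0}\lim_{n,N\to\infty}$ claimed, which completes the proof.
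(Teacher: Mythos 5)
Your overall architecture (triangle-inequality splittings, exponential ergodicity uniform in $\theta$, pointwise convergence promoted to uniform convergence by equicontinuity, and deducing (iii) from (i), (ii) and Lemma~\ref{lem:contrast-conv}) matches the paper's, and your treatments of (i) and (iii) are sound. For (i) you even take a slightly different and legitimate route: you fix a finite horizon $t_\varepsilon$ first and only need the strong error of the Euler scheme on a compact interval, whereas the paper keeps all three terms at the common time $N\gamma$, lets $N\to\infty$, and therefore needs a strong-error bound that is uniform in time (imported from \cite[Proposition 3.7 (i)]{panloup2020general} and combined with the interpolation estimate \eqref{eq:bound-3-interpol}); the paper's heavier argument implicitly yields a rate in $\gamma$, while yours only gives convergence, which is all that is claimed. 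Note, however, that the compact-interval strong error you invoke is not in Appendix~\ref{appendix-L}; it must be derived (a standard Gr\"onwall argument, with constants uniform over $\Theta$ by Proposition~\ref{prop:uniform-Lq-bounds}).

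The genuine gap is in (ii). Your equicontinuity argument rests on a claimed estimate $\sup_k \| Y^{\theta_1,\gamma}_{k\gamma} - Y^{\theta_2,\gamma}_{k\gamma}\| \leq C |\theta_1-\theta_2|$, uniform in $k$, described as "the Euler-scheme analogue of Lemma~\ref{prop:continuity}". No such estimate exists in the paper, and it cannot be obtained from Lemma~\ref{prop:continuity}: there the constant $C_{T,p}$ grows with $T$ (it contains factors like $T e^{K^2T^2}$ and $T^{\max(\mathcal{H})}(\log^2 T+1)^{1/2}$), because the sensitivity of the driving noise in $H$ degrades with time ($\mathbb{E}|B^{H_1}_t - B^{H_2}_t|^2 \lesssim t^{2H}\log^2(t)\,|H_1-H_2|^2$). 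Obtaining uniform-in-time regularity in $H$ for general coercive drifts is precisely the hard problem treated in the companion paper \cite{HRarxiv}, and what is proved there — and used by the paper — is weaker and of a different nature: an almost-sure bound on the \emph{Ces\`aro means}, $\frac{1}{N}\sum_{k}|Y^{\theta_1,\gamma}_{k\gamma}-Y^{\theta_2,\gamma}_{k\gamma}|^2 \leq \mathbf{C}_\gamma |\theta_1-\theta_2|^{\varpi}$ with $\varpi<1$ and a random constant $\mathbf{C}_\gamma$ independent of $N$ (Proposition~\ref{prop:regularity-result}, inequality \eqref{eq:continuityM-theta-discret}). This is exactly what your diagonal-coupling bound needs, since $\mathcal{W}_2^2$ between the two empirical measures is itself a Ces\`aro mean; moreover, because (ii) is an almost-sure statement, you need an a.s. modulus of equicontinuity valid simultaneously for all $N$, which the pathwise bound of Proposition~\ref{prop:regularity-result} provides and which an $L^p$-type pointwise-in-$k$ estimate would not directly give (one would still need a chaining/Borel--Cantelli argument). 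Replacing your $\sup_k$ estimate by \eqref{eq:continuityM-theta-discret} — and handling $d(\mu^\gamma_{\theta_1},\mu^\gamma_{\theta_2})$ by passing to the limit $N\to\infty$ in that same bound, as the paper does — closes the gap.
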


\begin{proof}
Notice that $(iii)$ is a simple consequence of the previous statements $(i)$ and $(ii)$.
\paragraph{Proof of $(i)$.}
By the triangle inequality,
\begin{align*}
d(\mu_\theta,\mu_\theta^\gamma) \leq d(\mu_\theta,\mathcal{L}(X_{N \gamma}^\theta)) + d(\mu_\theta^\gamma,\mathcal{L}(X_{N \gamma}^{\theta,\gamma} ))+ d(\mathcal{L}(X_{N \gamma}^{\theta,\gamma}),\mathcal{L}(X_{N \gamma}^\theta)) .
\end{align*}
Since $d$ is bounded by the 2-Wasserstein distance, for all $N\ge 1$ there is
\begin{align}\label{eq:defW123}
d(\mu_\theta,\mu_\theta^\gamma) &\leq \mathcal{W}_2(\mu_\theta,\mathcal{L}(X_{N \gamma}^\theta)) + \mathcal{W}_2(\mu_\theta^\gamma,\mathcal{L}(X_{N \gamma}^{\theta,\gamma} ))+ \mathcal{W}_2(\mathcal{L}(X_{N \gamma}^{\theta,\gamma}),\mathcal{L}(X_{N \gamma}^\theta)) \nonumber \\
&=: W^{(1)} + W^{(2)} + W^{(3)}.
\end{align}
As for $W^{(1)}$, we have
\begin{align*}
W^{(1)} = \mathcal{W}_2(\mu_\theta,\mathcal{L}(X_{N \gamma}^\theta))  \leq
\left(\mathbb{E}|   X_{N \gamma}^\theta  - \bar{X}_{N \gamma}^\theta |^2\right)^\frac{1}{2} .
\end{align*}
By Lemma~\ref{prop:general-results}, the right-hand side term converges to $0$ as $N\to \infty$ uniformly in $\theta$. We now look at the second term:
\begin{align}\label{eq:bound-1}
W^{(2)} = \mathcal{W}_2(\mu_\theta^\gamma, \mathcal{L}(X_{N \gamma}^{\theta,\gamma})) & \leq \left(\mathbb{E}|\bar{X}_{N  \gamma}^{\theta,\gamma} - X_{N \gamma}^{\theta,\gamma}|^2 \right)^\frac{1}{2} \nonumber \\
& \leq C_q  \sum_{i=0}^{q} \left(\mathbb{E}|\bar{Y}_{N  \gamma+ih}^{\theta,\gamma} - Y_{N \gamma+ih}^{\theta,\gamma}|^2 \right)^\frac{1}{2} .
\end{align}
By \cite[Equation (4.2)]{panloup2020general}, we have for any $k \in \mathbb{N}$,
\begin{align}\label{eq:bound-1bis}
\left|\bar{Y}_{k \gamma}^{\theta,\gamma}-Y_{k \gamma}^{\theta,\gamma}\right|^2 \leq \left(1-2 \gamma \beta+\gamma^2 K^2\right)^k\left|\bar{Y}_0^{\theta,\gamma}- Y_0^{\theta,\gamma}\right|^2.
\end{align}
Furthermore, for any $i \in \llbracket 0,q \rrbracket$, there exists $j \in \mathbb{N}$ such that $Y_{N \gamma+ih}^{\theta,\gamma}=Y_{j \gamma}^{\theta,\gamma}$ and $\bar{Y}_{N  \gamma+ih}^{\theta,\gamma} = \bar{Y}_{j  \gamma}^{\theta,\gamma}$. Therefore, the bound \eqref{eq:bound-1bis} holds for all the terms in \eqref{eq:bound-1}. We conclude that there exists $\gamma_0>0$, such that for $\gamma \leq \gamma_0$, the second term goes to $0$ uniformly in $\theta$ when $N\to \infty$. Now for the last term in \eqref{eq:defW123}, by definition of the Wasserstein distance, we have 
\begin{align*}
W^{(3)} = \mathcal{W}_2( \mathcal{L}(X_{N \gamma}^\theta), \mathcal{L}(X_{N \gamma}^{\theta,\gamma})) & \leq \left( \mathbb{E} |X_{N  \gamma}^\theta - X_{N \gamma}^{\theta,\gamma}(B) |^2 \right)^\frac{1}{2} \\
& \leq C_q  \sum_{i=0}^{q} \left( \mathbb{E} | Y_{N  \gamma+ih}^\theta - Y_{N \gamma+ih}^{\theta,\gamma}(B) |^2 \right)^\frac{1}{2} .
\end{align*}
In \cite[Proposition 3.7 (i)]{panloup2020general}, it was proved that there exists positive constants $C$ and $\rho$ that depend only the Lipschitz constant $K$ from \ref{asmp:drift}  such that for any $m \in \mathbb{N}$,
\begin{align*}
| Y_{m \gamma}^{\theta}-Y_{m \gamma}^{\theta,\gamma}(B) |^2 \leq C \sum_{j=0}^{m-1} \phi_{j}( Y_{j \gamma}^{\theta,\gamma}(B) ) e^{-\rho \gamma (m-j+1)} ,
\end{align*}
where $\phi_{j} (z) = \gamma^{3} |b_\xi(z) |^2 + \int_0^{\gamma} |B_{j \gamma + t} - B_{j \gamma} |^{2} dt$.
This pathwise comparison is possible because the two processes are defined with the same noise $B$.
Since $b_\xi$ is uniformly sub-linear, it follows that
\begin{align}\label{eq:bound-3}
| Y_{m \gamma}^{\theta}-Y_{m \gamma}^{\theta,\gamma}(B) |^2 \leq C  \sum_{j=0}^{m-1} \left( \gamma^{3} (1+ |Y_{j \gamma}^{\theta,\gamma}(B)|^{2r})+  \int_0^\gamma |B_{j \gamma + t} - B_{j \gamma} |^{2} d t \right) e^{-\rho \gamma (m-j+1)}  .
\end{align}
Now for $i \in \llbracket 0,q \rrbracket$ and $k \in \mathbb{N}$, since the process $Y^{\theta,\gamma}$ is constant over intervals of size $\gamma$, recalling the notation $t_\gamma = \gamma \lfloor \frac{t}{\gamma} \rfloor$, we can always write 
\begin{align}\label{eq:interpol-decom}
Y_{k \gamma+ih}^{\theta}-Y_{k \gamma+ih}^{\theta,\gamma}(B) =\left( Y_{(k \gamma + ih)_{\gamma}+\varepsilon_{k,i}}^\theta- Y_{(k \gamma + ih)_{\gamma} }^{\theta} \right) +  \left(Y_{(k \gamma + ih)_{\gamma} }^{\theta}-Y_{(k \gamma + ih)_{\gamma}}^{\theta,\gamma}(B)\right),
\end{align}
where 
$$\varepsilon_{k,i} =k \gamma + i h - (k \gamma + i h)_\gamma  < \gamma. $$
For the first term in \eqref{eq:interpol-decom}, using the sub-linear growth of $b$, we write
\begin{align*}
& | Y_{(k \gamma + i h)_\gamma  +\varepsilon_{k,i}}^\theta- Y_{(k \gamma + i h)_\gamma  }^{\theta} | \\ & \leq C \left( \int_{(k \gamma + ih)_{\gamma}}^{(k \gamma + ih)_{\gamma}+\varepsilon} (1+| Y_s^\theta|^{r} ) ds + | B_{(k \gamma + ih)_{\gamma}+\varepsilon_{k,i}}-B_{(k \gamma + ih)_{\gamma}} | \right) .
\end{align*}
It follows from Jensen's inequality that
\begin{align} \label{eq:epsilon-bound} 
\begin{split}
&  | Y_{j(k \gamma + ih)_{\gamma}+\varepsilon_{k,i}}^\theta- Y_{(k \gamma + i h)_\gamma }^{\theta} |^2 \\ & \leq C \left( \varepsilon_{k,i} \int_{(k \gamma + ih)_{\gamma}}^{(k \gamma + ih)_{\gamma}+\varepsilon_{k,i}} (1+| Y_s^\theta|^{2r} ) ds +  | B_{(k \gamma + ih)_{\gamma}+\varepsilon_{k,i}}-B_{(k \gamma + ih)_{\gamma}} |^2 \right).
\end{split}
 \end{align}
The second term in \eqref{eq:interpol-decom} can be bounded using \eqref{eq:bound-3} with $m \equiv \frac{(k \gamma + ih)_{\gamma}}{\gamma}$. Combining this and \eqref{eq:epsilon-bound} in \eqref{eq:interpol-decom}, we get that for any $k \in \mathbb{N}$,
\begin{equation}\label{eq:bound-3-interpol}
\begin{split}
 & \sum_{i=0}^q |Y_{k \gamma+ih}^\theta - Y_{k \gamma+ih}^{\theta,\gamma}(B)|^2   \\ &
 \leq C  \sum_{i=0}^q  \gamma \int_{(k \gamma + ih)_{\gamma}}^{(k \gamma + ih)_{\gamma}+\varepsilon_{k,i}} (1+| Y_s^\theta|^{2r} ) ds +| B_{(k \gamma + ih)_{\gamma}+\varepsilon_{k,i}}-B_{(k \gamma + ih)_{\gamma}} |^2 \\ 
 & \quad + C \sum_{i=0}^q \sum_{j=0}^{\lfloor \frac{ k \gamma + ih}{\gamma} \rfloor -1} \Big( \gamma^{2} (1+ |Y_{j \gamma}^{\theta,\gamma}(B)|^{2r}) + \gamma^{-1} \int_0^\gamma |B_{j \gamma + t} - B_{j \gamma} |^{2} d t \Big) \gamma e^{-\rho(\lfloor \frac{ k \gamma + ih}{\gamma} \rfloor -j+1)}.
\end{split}
\end{equation}
Taking the expectation, using $\displaystyle\limsup_{\substack{n \rightarrow \infty\\ \gamma \rightarrow 0} } \gamma \sum_{j=0}^{n} e^{-\rho(n-j+1)} < + \infty$ and $r \leq 1$, we get
\begin{align*}
& \sup_{\theta \in \Theta,  \gamma \in (0,\gamma_0)} \limsup_{k \rightarrow \infty}  \gamma^{-2\max(\mathcal{H})} \sum_{i=0}^q \mathbb{E}\left|Y_{N \gamma+ih}^\theta - Y_{N \gamma+ih}^{\theta,\gamma}(B) \right|^2 \\ 
& \quad \leq C \left(  1 +\sup_{\theta \in \Theta,  \gamma \in (0,\gamma_0)} \limsup_{k \rightarrow \infty} \mathbb{E}| Y_{k \gamma}^{\theta,\gamma}(B) |^{2r} +\sup_{\theta \in \Theta} \limsup_{t \rightarrow \infty} \mathbb{E}| Y_{t}^{\theta} |^{2r}  \right)  . 
\end{align*}
Using Proposition \ref{prop:uniform-Lq-bounds-L}$(i)$ and Proposition \ref{prop:uniform-Lq-bounds}$(i)$, it follows that there exists $\gamma_0$ such that for $\gamma \leq \gamma_0$, the right-hand side is finite. This concludes the proof of $(i)$.

\paragraph{Proof of $(ii)$.} We already know that the convergence is true for fixed $\theta$.  In order to extend the result to uniform convergence, we show that the family $\{ \theta \mapsto d(\frac{1}{N} \sum_{k=0}^N \delta_{X_{k\gamma}^{\theta,\gamma}},  \mu_\theta^\gamma); N \ge 1 ; \theta \in  \Theta \}$ is equicontinuous for a fixed $\gamma \in (0,\gamma_0]$. For some $\theta_1$ and $\theta_2$  in $\Theta$, there is
\begin{align*}
& \left|  d\left(\frac{1}{N} \sum_{k=0}^{N-1} \delta_{X_{k\gamma}^{\theta_1,\gamma}},  \mu_{\theta_1}^\gamma\right)  -  d\left(\frac{1}{N} \sum_{k=0}^{N-1} \delta_{X_{k\gamma}^{\theta_2,\gamma}},  \mu_{\theta_2}^\gamma\right) \right| \\ 
&\quad \leq d(\mu_{\theta_1}^\gamma,\mu_{\theta_2}^\gamma) +  d\left(\frac{1}{N} \sum_{k=0}^{N-1} \delta_{X_{k\gamma}^{\theta_1,\gamma}},\frac{1}{N} \sum_{k=0}^{N-1} \delta_{X_{k\gamma}^{\theta_2,\gamma}}\right) .
\end{align*}
Decompose the second term to get
\begin{align*}
d\left(\frac{1}{N} \sum_{k=0}^{N-1} \delta_{X_{k\gamma}^{\theta_1,\gamma}},\frac{1}{N} \sum_{k=0}^{N-1}  \delta_{X_{k\gamma}^{\theta_2,\gamma}}\right)^2 
& \leq C \mathcal{W}_2\left(\frac{1}{N} \sum_{k=0}^{N-1}  \delta_{X_{k\gamma}^{\theta_1,\gamma}},\frac{1}{N} \sum_{k=0}^{N-1}  \delta_{X_{k\gamma}^{\theta_2,\gamma}}\right)^2 \\
&  \leq C \frac{1}{N} \sum_{k=0}^{N-1} |  X_{k\gamma}^{\theta_1,\gamma}-X_{k\gamma}^{\theta_2,\gamma} |^2 \\
& \leq C_q   \sum_{i=0}^{q} \frac{1}{N} \sum_{k=0}^{N-1}   |  Y_{k\gamma+ih}^{\theta_1,\gamma}-Y_{k\gamma+ih}^{\theta_2,\gamma} |^2 .
\end{align*}
Let $\varpi \in (0,1)$ and $p \ge 1$. By Proposition \ref{prop:regularity-result},  there exists a random variable $\mathbf{C}$ with finite moments of order $p$ such that for all $\theta_1, \theta_2 \in \Theta$,
\begin{align*}
\frac{1}{N} \sum_{k=0}^{N-1}   |  Y_{k\gamma}^{\theta_1,\gamma}-Y_{k\gamma}^{\theta_2,\gamma} |^2  & \leq \mathbf{C} \left( 1 \wedge | \theta_1 -\theta_2 |^{\varpi} \right) .
\end{align*}

These results still hold when replacing $Y_{k\gamma}^{\theta,\gamma}$ by $Y_{k\gamma+ih}^{\theta,\gamma}$, since we compare two piecewise constant processes. Thus $d(\frac{1}{N} \sum_{k=0}^{N-1} \delta_{X_{k\gamma}^{\theta_1,\gamma}},\frac{1}{N} \sum_{k=0}^{N-1}  \delta_{X_{k\gamma}^{\theta_2,\gamma}})$ goes to $0$ as $| \theta_1 - \theta_2 |  \rightarrow 0$ uniformly in $N$.  The same goes for  $d(\mu_{\theta_1}^\gamma,\mu_{\theta_2}^\gamma) $  by taking the limit $N  \rightarrow \infty$. This concludes the proof of the equicontinuity and therefore the proof of $(ii)$.
\end{proof}

\subsection{Proof of the bounds \eqref{eq:bounds-rate1}, \eqref{eq:bounds-rate2} and \eqref{eq:bounds-rate3}}\label{sec:bounds-rate}

We prove here the bounds \eqref{eq:bounds-rate1}, \eqref{eq:bounds-rate2} and \eqref{eq:bounds-rate3} on $D_{N, \gamma}^{(21)}$, $D_{N,\gamma}^{(22)}$ and $D_{N, \gamma}^{(3)}$ that were defined in \eqref{eq:decom-D1-2-3}. In this section, $d$ always refer to the distance $d_{CF,p}$.

\begin{prop}\label{prop:boud-rateofconv}
Recall that $\alpha$ is the exponent in the strong identifiability assumption \ref{asmp:strong_identif}. Assume that the exponent $r$ in the sub-linear growth of $b_\xi$ in \eqref{eq:drift-growth} satisfies $r \leq 1$.
For any $\varepsilon\in (0, \alpha\min(\mathcal{H}))$ and any $\varpi\in (0,1)$, there exist constants $C_{\alpha,\varepsilon}>0$ and $C_{\alpha,\varepsilon,\varpi}>0$  such that for all $\gamma \in (0, \gamma_0]$ and $N \ge 1$ satisfying $N \gamma \ge 1$, the inequalities \eqref{eq:bounds-rate1}, \eqref{eq:bounds-rate2} and \eqref{eq:bounds-rate3} hold.
\end{prop}

\begin{proof}
First, observe that for both $D_{N,\gamma}^{(21)}$ and $D_{N, \gamma}^{(22)}$ we compare a solution of an SDE with its respective Euler scheme, with both processes defined with the same noise $B$. This allows to do a pathwise comparison. We only detail the bound on $D_{N,\gamma}^{(21)}$, the bound on $D_{N,\gamma}^{(22)}$ can be obtained similarly.
Since $d_{CF,p}$ is an element of $\mathcal{D}_1$, there is
\begin{align*}
\sup_{\theta \in \Theta} D_{N, \gamma}^{(21)}(\theta) & \leq \frac{1}{N} \sum_{k=0}^{N-1} \sup_{\theta \in \Theta} | X_{k \gamma}^{\theta} - X_{k \gamma}^{\theta,\gamma}(B) | \\
& \leq C_q \sum_{i=0}^{q} \frac{1}{N} \sum_{k=0}^{N-1} \sup_{\theta \in \Theta} |Y_{k \gamma+ih}^{\theta} -Y_{k \gamma+ih}^{\theta,\gamma}(B) |.
\end{align*}
Recall that $\alpha \ge 2$ in \ref{asmp:strong_identif}. Hence, an application of Jensen's inequality gives
\begin{align*}
\mathbb{E} \sup_{\theta \in \Theta} D_{N, \gamma}^{(21)}(\theta)^\alpha  \leq C_{q,\alpha} \sum_{i=0}^{q} \mathbb{E} \Biggl[ \sup_{\theta \in \Theta} \frac{1}{N}  \sum_{k=0}^{N-1}  |  Y_{k \gamma + ih}^{\theta}-Y_{k \gamma + ih}^{\theta,\gamma}(B) |^{\alpha} \Biggr] 
\end{align*}
Define 
$$ \mathcal{I} := \sum_{i=0}^{q}  \sup_{\theta \in \Theta} \frac{1}{N}  \sum_{k=0}^{N-1}  |  Y_{k \gamma + ih}^{\theta}-Y_{k \gamma + ih}^{\theta,\gamma}(B) |^{2}  .$$
We will first provide a bound on $\mathcal{I}$. Using \eqref{eq:bound-3-interpol},
\begin{align}
& \sum_{i=0}^q  |Y_{k \gamma+ih}^\theta - Y_{k \gamma+ih}^{\theta,\gamma}(B)|^2  \nonumber \\ 
& \quad \leq C  \sum_{i=0}^q \Bigg( \gamma \int_{(k \gamma + ih)_{\gamma}}^{(k \gamma + ih)_{\gamma}+\varepsilon_{k,i}} (1+| Y_s^\theta|^{2 r} ) ds +| B_{(k \gamma + ih)_{\gamma}+\varepsilon_{k,i}}-B_{(k \gamma + ih)_{\gamma}} |^2 \Bigg) \nonumber \\ 
& \quad\quad + C \sum_{i=0}^q \Bigg( \sum_{j=0}^{\lfloor \frac{ k \gamma + ih}{\gamma} \rfloor -1} \Big( \gamma^{2} (1+ |Y_{j \gamma}^{\theta,\gamma}(B)|^{2r})  + \gamma^{-1} \int_0^\gamma |B_{j \gamma + t} - B_{j \gamma} |^{2} d t \Big) \gamma e^{-\rho(\lfloor \frac{ k \gamma + ih}{\gamma} \rfloor -j+1)} \Bigg)  \nonumber \\
&\quad =: C \sum_{i=0}^q  \left( \mathcal{I}_{1,k}(i) + \mathcal{I}_{2,k}(i)+\mathcal{I}_{3,k}(i)+\mathcal{I}_{4,k}(i)\right).
 \label{eq:defdesI}
\end{align}
Hence there is $\displaystyle \mathcal{I} \leq C_q \sup_{\theta \in \Theta} \sum_{i = 0}^{q} \frac{1}{N} \sum_{k=0}^{N-1} \left( \mathcal{I}_{1,k}(i) + \mathcal{I}_{2,k}(i)+\mathcal{I}_{3,k}(i)+\mathcal{I}_{4,k}(i) \right)$.
 Let us provide uniform bounds in $\theta$ on the sum over $k$ of the terms $\mathcal{I}_{1,k}(i),\,  \mathcal{I}_{2,k}(i),\, \mathcal{I}_{3,k}(i),\, \mathcal{I}_{4,k}(i)$. First we have
\begin{align}
\frac{1}{N} \sum_{k=0}^{N-1} \mathcal{I}_{1,k}(i) &  \leq \frac{1}{N} \int_{(ih)_\gamma}^{(N \gamma+ih)_\gamma} \gamma (1+| Y_s^\theta|^{2r} ) ds \nonumber \\ 
& \leq    \frac{\gamma}{N} \int_{ih}^{N \gamma+ih} (1+| Y_s^\theta|^{2r} ) ds + \frac{\gamma}{N} \int_{(ih)_\gamma}^{ih} (1+| Y_s^\theta|^{2r} ) ds  \nonumber \\
& \leq  \frac{\gamma^2}{N\gamma} \int_{0}^{N \gamma} (1+| Y_{s+ih}^\theta|^{2r} ) ds + 2 \gamma^2 \mathds{1}_{i \neq 0} \left(\frac{1}{ih} \int_{0}^{ih}  (1+| Y_s^\theta|^{2r} ) ds \right) \label{eq:I2k}.
\end{align}
For $ \mathcal{I}_{3,k}(i) $, write
\begin{align}
& \frac{1}{N} \sum_{k=0}^{N-1} \mathcal{I}_{3,k}(i)   \leq C  \frac{\gamma^2}{N} \sup_{\theta \in \Theta}  \sum_{k=0}^{\lfloor \frac{ N \gamma + ih}{\gamma} \rfloor -1}  (1+|Y_{k \gamma}^{\theta,\gamma}(B)|^{2r} ) \nonumber \\ 
 &\hspace{2.15cm} \leq  C \frac{\gamma}{N} \sup_{\theta \in \Theta} \int_0^{(N \gamma +ih)_\gamma - \gamma} (1+ |Y_{t_{\gamma}}^{\theta,\gamma}(B)|^{2r}) dt \nonumber \\
 &  \leq  C \frac{\gamma}{N} \sup_{\theta \in \Theta} \int_0^{N \gamma - \gamma} (1+ |Y_{t_{\gamma + ih}}^{\theta,\gamma}(B)|^{2r}) dt  + C  \mathds{1}_{i \neq 0} \frac{\gamma}{N}  \sup_{\theta \in \Theta}  \int_0^{ih} (1+ |Y_{t_{\gamma}}^{\theta,\gamma}(B)|^{2r}) dt \nonumber \\
 & \leq C \frac{\gamma^2}{N\gamma} \sup_{\theta \in \Theta} \int_0^{N \gamma } (1+ |Y_{t_{\gamma + ih}}^{\theta,\gamma}(B)|^{2r}) dt + C \frac{\gamma^2}{N \gamma}  \mathds{1}_{i \neq 0} \sup_{\theta \in \Theta}  \frac{1}{ih} \int_0^{ih} (1+ |Y_{t_\gamma}^{\theta,\gamma}(B)|^{2r}) dt . \label{eq:I3k}
\end{align}
For $\mathcal{I}_{4,k}(i)$ we have
\begin{align}
& \frac{1}{N} \sum_{k=0}^{N-1} \mathcal{I}_{4,k}(i)  \leq \frac{1}{N} \sum_{k=0}^{\lfloor \frac{ N \gamma + ih}{\gamma} \rfloor -1} \gamma^{-1} \int_0^\gamma | B_{k \gamma + t} - B_{k \gamma} |^{2} d t \nonumber  \\
&\hspace{2.15cm} \leq \frac{\gamma^{-2} }{N} \int_0^{N \gamma + ih - \gamma}  \left( \int_0^\gamma  | B_{s_\gamma + t} - B_{s_\gamma} |^{2} dt \right) ds \nonumber \\ 
& \leq \frac{1 }{N \gamma} \int_0^{ih} \left( \gamma^{-1}  \int_0^\gamma  | B_{s_\gamma + t} - B_{s_\gamma} |^{2} dt \right) ds +  \frac{1}{N\gamma} \int_0^{N \gamma} \left( \gamma^{-1}  \int_0^\gamma  | B_{s_\gamma +ih+ t} - B_{s_\gamma+ih} |^{2} dt \right) ds .\label{eq:I4k}
\end{align}
Therefore, using \eqref{eq:I2k}, \eqref{eq:I3k}, \eqref{eq:I4k} in \eqref{eq:defdesI}, it comes
\begin{align*}
\mathcal{I} & \leq C  \sum_{i=0}^q \Bigg( \sup_{\theta \in \Theta}   \frac{\gamma^2}{N\gamma} \int_{0}^{N \gamma} (1+| Y_{s+ih}^\theta|^{2r} ) ds + 2\mathds{1}_{i \neq 0} \sup_{\theta \in \Theta}   \gamma^2\frac{1}{ih} \int_{0}^{ih}  (1+| Y_s^\theta|^{2r} ) ds   \\ 
& \quad +  C \frac{\gamma^2}{N\gamma} \sup_{\theta \in \Theta} \int_0^{N \gamma } (1+ |Y_{t_{\gamma + ih}}^{\theta,\gamma}(B)|^{2r}) dt  + C \frac{\gamma^2}{N \gamma} \mathds{1}_{i \neq 0} \sup_{\theta \in \Theta}  \frac{1}{ih} \int_0^{ih} (1+ |Y_{t_\gamma}^{\theta,\gamma}(B)|^{2r}) dt \\
& \quad + \frac{1 }{N \gamma} \int_0^{ih} \left( \gamma^{-1}  \int_0^\gamma  \sup_{\theta \in \Theta}   | B_{s_\gamma + t} - B_{s_\gamma} |^{2} dt \right) ds  +  \frac{1}{N\gamma} \int_0^{N \gamma} \left( \gamma^{-1}  \int_0^\gamma  \sup_{\theta \in \Theta}   | B_{s_\gamma +ih+ t} - B_{s_\gamma+ih} |^{2} dt \right) ds \\
& \quad + \frac{1}{N} \sum_{k=0}^{N-1} \sup_{\theta \in \Theta}  | B_{(k \gamma + ih)_{\gamma}+\varepsilon_{k,i}}-B_{(k \gamma + ih)_{\gamma}} |^2 \Bigg)  .
\end{align*}
Since $N \gamma \ge 1$ and $\varepsilon_{k,i} < \gamma$, using Jensen's inequality ($\alpha/2 \ge 1$) and taking the expectation, we get by applying \cite[Proposition 3.5]{HRarxiv} that for $\varepsilon \in (0,\alpha\min(\mathcal{H}))$,
\begin{align*}
\mathbb{E} [ \mathcal{I}^{\alpha/2} ] & \leq C_q \sum_{i=0}^q \Bigg( \mathbb{E} \left[ \sup_{\theta \in \Theta} \frac{1}{N\gamma} \int_{0}^{N \gamma} (1+| Y_{s+ih}^\theta|^{2r} ) ds \right]^{\alpha/2} \\ & \quad + 2 \gamma^2 \mathds{1}_{i \neq 0} \mathbb{E} \left[\sup_{\theta \in \Theta} \frac{1}{ih} \int_{0}^{ih}  (1+| Y_s^\theta|^{2r} ) ds  \right]^{\alpha/2}  \\ 
& \quad +  C \gamma^2 \mathbb{E} \left[ \frac{1}{N\gamma} \sup_{\theta \in \Theta} \int_0^{N \gamma } (1+ |Y_{t_{\gamma + ih}}^{\theta,\gamma}(B)|^{2r}) dt \right]^{\alpha/2} \\ & \quad + C \gamma^2 \mathds{1}_{i \neq 0} \mathbb{E} \left[  \sup_{\theta \in \Theta}  \frac{1}{ih} \int_0^{ih} (1+ |Y_{t_\gamma}^{\theta,\gamma}(B)|^{2r}) dt \right]^{\alpha/2} \\
& \quad + \gamma^{\alpha \min(\mathcal{H})-\varepsilon} +  \frac{1}{N\gamma} \int_0^{N \gamma} \gamma^{\alpha \min(\mathcal{H})-\varepsilon} \, ds +  \frac{1}{N} \sum_{k=0}^{N-1} \gamma^{\alpha \min(\mathcal{H})-\varepsilon} \Bigg).
\end{align*}
It follows that
\begin{align*}
\mathbb{E} [ \mathcal{I}^{\alpha/2} ] &  \leq C_q \sum_{i=0}^q \Bigg( \mathbb{E} \left[ \sup_{\theta \in \Theta} \frac{1}{N\gamma} \int_{0}^{N \gamma} | Y_{s+ih}^\theta|^{2r}  ds \right]^{\alpha/2} \\ & \quad + 2 \gamma^2 \mathds{1}_{i \neq 0} \mathbb{E} \left[\sup_{\theta \in \Theta} \frac{1}{ih} \int_{0}^{ih}  | Y_s^\theta|^{2r}  ds  \right]^{\alpha/2}  \\ 
& \quad +  C \gamma^2 \mathbb{E} \left[ \frac{1}{N\gamma} \sup_{\theta \in \Theta} \int_0^{N \gamma } |Y_{t_{\gamma + ih}}^{\theta,\gamma}(B)|^{2r} dt \right]^{\alpha/2} \\ & \quad + C \gamma^2 \mathds{1}_{i \neq 0} \mathbb{E} \left[  \sup_{\theta \in \Theta}  \frac{1}{ih} \int_0^{ih}  |Y_{t_\gamma}^{\theta,\gamma}(B)|^{2r} dt \right]^{\alpha/2} + \gamma^\alpha +  \gamma^{\alpha \min(\mathcal{H})-\varepsilon} \Bigg).
\end{align*} 
By Proposition~\ref{prop:uniform-Lq-bounds}$(iii)$, Proposition~\ref{prop:uniform-Lq-bounds-L}$(ii)$ and since $r \leq 1$, we have that
\begin{align*}
\mathbb{E} \left[ \sup_{\theta \in \Theta} \frac{1}{N\gamma} \int_{0}^{N \gamma} | Y_{s}^\theta|^{2r}  ds \right]^{\alpha/2} ~\text{ and } \quad
\mathbb{E} \left[ \frac{1}{N\gamma} \sup_{\theta \in \Theta} \int_0^{N \gamma } |Y_{t_{\gamma }}^{\theta,\gamma}(B)|^{2r} dt \right]^{\alpha/2} 
\end{align*}
are bounded uniformly in $N$ and $\gamma$. One can check that the result still holds when the process is shifted by $ih$ since the shifted process is still solution of an SDE that satisfies the necessary assumptions. 
Therefore, for $\varepsilon \in (0,\alpha\max(\mathcal{H}))$,
\begin{align*}
\mathbb{E} [ \mathcal{I}^{\alpha/2} ] & \leq C \gamma^{\alpha \min(\mathcal{H})-\varepsilon}  .
\end{align*}
We conclude by observing that by Jensen's inequality
\begin{align*}
 \mathcal{I}   & \ge  \sum_{i=0}^{q} \left( \sup_{\theta \in \Theta} \frac{1}{N}  \sum_{k=0}^{N-1}  |  Y_{k \gamma + ih}^{\theta}-Y_{k \gamma + ih}^{\theta,\gamma}(B) | \right)^2  \\
& \ge C \left( \sum_{i=0}^{q}  \sup_{\theta \in \Theta} \frac{1}{N}  \sum_{k=0}^{N-1}  |  Y_{k \gamma + ih}^{\theta}-Y_{k \gamma + ih}^{\theta,\gamma}(B) |  \right)^2 .
\end{align*}
Hence
\begin{align}\label{eq:proof-bound-D2}
\mathbb{E}  \sup_{\theta \in \Theta} D_{N, \gamma}^{(21)}(\theta)^\alpha  \leq C \mathbb{E} [ \mathcal{I}^{\alpha/2}]  \leq C \gamma^{\alpha \min(\mathcal{H})-\varepsilon} .
\end{align}

\vspace{0.1cm}

Consider now $D_{N, \gamma}^{(3)}(\theta)$, which was defined in \eqref{eq:decom-D1-2-3}. 
Since $\mu_\theta$ is also the stationary law of the process $X^{\theta}(\hat{B})$, we drop the dependence on $\hat{B}$ for the rest of the proof. We first start with the following decomposition:
\begin{align*}
\sup_{\theta \in \Theta} D_{N, \gamma}^{(3)}(\theta) \leq \sup_{\theta \in \Theta} D_{N, \gamma}^{(31)}(\theta) + \sup_{\theta \in \Theta} D_{N, \gamma}^{(32)}(\theta)  ,
\end{align*}
where, noticing that $\frac{1}{N} \sum_{k=0}^{N-1} \delta_{X_{k \gamma}^{\theta}} = \frac{1}{T} \int_{0}^T \delta_{X_{t_{\gamma}}^\theta} dt$ for $T=N\gamma$,
\begin{align*}
D_{N, \gamma}^{(31)}(\theta) & = d\left(\mu_{\theta}, \frac{1}{T} \int_{0}^T \delta_{X_t^\theta} dt \right)\\
D_{N, \gamma}^{(32)}(\theta) & = d\left(\frac{1}{T} \int_{0}^T \delta_{X_t^\theta} dt, \frac{1}{T} \int_{0}^T \delta_{X_{t_{\gamma}}^\theta} dt\right) .
\end{align*}

\paragraph{Bound on $\sup_{\theta \in \Theta} D_{N, \gamma}^{(32)}(\theta)$.} Similar arguments as before lead to
\begin{align*}
\mathbb{E}  \sup_{\theta \in \Theta} D_{N, \gamma}^{(32)}(\theta)^\alpha \leq C_{q,\alpha} \sum_{i=0}^{q} \frac{1}{T} \int_0^T \sup_{\theta \in \Theta}   | Y_{t + ih}^{\theta}-Y_{t_{\gamma} + ih}^{\theta} |^\alpha\, dt  .
\end{align*}
We will show how to bound the quantity above for $i=0$. The same arguments can be used for any value of $i$. Since $Y^{\theta}$ is a solution of \eqref{eq:fsde}, 
it follows from using Jensen's inequality and integrating over $t$ that
\begin{align*}
\frac{1}{T} \int_0^T | Y_t^{\theta} - Y_{t_\gamma}^{\theta} |^\alpha\, dt 
\leq 2^{\alpha-1} \gamma^{\alpha-1} \frac{1}{T} \int_0^T \int_{t_\gamma}^t |b_\xi(Y_s^\theta)|^\alpha \, ds\, dt + 2^{\alpha-1} | \sigma |^\alpha \frac{1}{T} \int_0^T | B_t - B_{t_\gamma} |^\alpha  dt .
\end{align*}
By Fubini's theorem, we get that
\begin{align*}
\frac{1}{T} \int_0^T | Y_t^{\theta} - Y_{t_\gamma}^{\theta} |^\alpha \, dt
\leq 2^{\alpha-1} \gamma^{\alpha} \frac{1}{T} \int_0^T  |b_\xi(Y_s^\theta) |^\alpha d s +2^{\alpha-1} | \sigma |^\alpha \frac{1}{T} \int_0^T | B_t - B_{t_\gamma} |^\alpha  d t .
\end{align*}
The drift term above is bounded thanks to the sublinear growth of $b_\xi$ given by \eqref{eq:drift-growth} and the uniform bounds on the $L^q$ moments of $Y_t^{\theta}$ given in Proposition \ref{prop:uniform-Lq-bounds} (ii). As for the term $|B_t - B_{t_\gamma} |$, we have thanks to \cite[Proposition 3.5]{HRarxiv} that for all $\varepsilon > 0$,
\begin{align*}
\mathbb{E} \left( \sup_{H \in \mathcal{H}} | B_t - B_{t_\gamma} |^\alpha \right) \leq C \gamma^{ \alpha \min(\mathcal{H}) -\varepsilon} .
\end{align*}
Hence, the two previous inequalities yield
\begin{align*}
\mathbb{E} \left( \sup_{\theta \in \Theta} \frac{1}{T} \int_0^T | Y_t^{\theta} - Y_{t_\gamma}^{\theta} |^{\alpha}  dt \right) \leq C \gamma^{ \alpha \min(\mathcal{H})-\varepsilon} .
\end{align*}

\paragraph{Bound on $D_{N, \gamma}^{(31)}(\theta)$.} The quantity $D_{N, \gamma}^{(31)}(\theta)$ can be handled the same way as $D_n^{(1)}$ in the proof of Lemma \ref{lem:bound-D_n^1}. Namely, we get that
\begin{align}\label{eq:D_n31-simple}
\mathbb{E} D_{N, \gamma}^{(31)}(\theta)^\alpha \leq C_\alpha \left(T^{-\alpha} + T^{-\frac{\alpha}{2}(2-\max(2\max(\mathcal{H}),1) )} \right) .
\end{align}

\paragraph{Bound on $ \sup_{\theta \in \Theta} D_{N, \gamma}^{(31)}(\theta)$.}
Let $\varphi(\theta) = d(\mu_{\theta}, \frac{1}{T} \int_0^T \delta_{X_t^\theta} dt )$.  Let $\varepsilon>0$ and $\Theta^{(\varepsilon)}:=\left\{\theta_i^{(\varepsilon)} \mid 1 \leq i \leq M_{\varepsilon}\right\}$ such that $\Theta \subset \bigcup_{i=1}^{M_{\varepsilon}} B\left(\theta_i^{(\varepsilon)}, \varepsilon\right)$ for some points $\theta_{i}^{(\varepsilon)}$ in $\Theta$. Then, as in \cite[Eq. (5.27)-(5.28)]{panloup2020general} and using \eqref{eq:D_n31-simple}, one gets
\begin{align*}
\mathbb{E} \sup_{\theta \in \Theta} \varphi (\theta)^\alpha \leq C_\alpha \left( \mathbb{E} \sup_{\theta \in \Theta} | \varphi(\theta) - \varphi(\theta_\varepsilon) |^\alpha + M_\varepsilon \Bigl( T^{-\alpha} + T^{-\frac{\alpha}{2}(2-\max(2\max(\mathcal{H}),1)} \Bigr)\right),
\end{align*}
where $\theta_{\varepsilon}:=\underset{\theta^{\prime} \in \{\theta_i^{(\varepsilon)}\}}{\operatorname{argmin}}\left|\theta^{\prime}-\theta\right|$.
Now ${| \varphi(\theta) - \varphi(\theta_\varepsilon) | \leq d(\mu_{\theta},\mu_{\theta_\varepsilon}) + d\left(\frac{1}{T} \int_0^T \delta_{X_t^\theta} dt, \frac{1}{T} \int_0^T \delta_{X_t^{\theta_\varepsilon}} dt\right)}$. 
Since $d$ belongs to $\mathcal{D}_2$, the second term in the right-hand side yields
\begin{align*}
d\left(\frac{1}{T} \int_0^T \delta_{X_t^\theta} dt, \frac{1}{T} \int_0^T \delta_{X_t^{\theta_\varepsilon}} dt\right) & \leq C_q \sum_{i=0}^{q}  \frac{1}{T} \int_0^T |Y_{t+ih}^{\theta}-Y_{t+ih}^{\theta_\varepsilon} |^2 dt .
\end{align*}
For $\varpi \in (0, 1)$, Proposition \ref{prop:regularity-result} gives the existence of a random variable $\mathbf{C}$ with finite moments such that
\begin{align*}
 \frac{1}{T} \int_0^T |Y_{t}^{\theta}-Y_{t}^{\theta_\varepsilon} |^2 dt \leq \mathbf{C} | \theta-\theta_\varepsilon |^{\frac{\varpi}{2}} .
\end{align*}
This bound still holds if $Y_t$ is replaced by $Y_{t+ih}$ since 
\begin{align*}
\frac{1}{T} \int_0^T |Y_{t+ih}^{\theta}-Y_{t+ih}^{\theta_\varepsilon} |^2 dt \leq \frac{T+ih}{T}\frac{1}{T+ih} \int_0^{T+ih} |Y_{t}^{\theta}-Y_{t}^{\theta_\varepsilon} |^2 dt .
\end{align*}
Overall, we get
\begin{align*}
 d\left(\frac{1}{T} \int_0^T \delta_{X_t^\theta} dt, \frac{1}{T} \int_0^T \delta_{X_t^{\theta_\varepsilon}} dt\right)  \leq C_{q}\, \mathbf{C} | \theta - \theta_\varepsilon |^{\frac{\varpi}{2}} .
\end{align*}
Hence we have obtained that
\begin{align*}
\mathbb{E} \sup_{\theta \in \Theta} \varphi (\theta)^\alpha \leq C_{\alpha,q, \varpi} \left(  \varepsilon^{\frac{\alpha \varpi}{2}}+ M_\varepsilon \Bigl( T^{-\alpha} + T^{-\frac{\alpha}{2}(2-\max(2\max(\mathcal{H}),1))} \Bigr)\right) .
\end{align*}
Choosing $M_\varepsilon \leq \frac{C}{\varepsilon^d}$ and $\varepsilon=T^{-\chi}$ for some $\chi>0$, it comes that
\begin{align*}
\mathbb{E} \sup_{\theta \in \Theta} \varphi (\theta)^\alpha \leq C_{\alpha,q,\varpi} \left(  T^{- \chi \alpha  \frac{\varpi}{2}}+ T^{-\frac{\alpha}{2}(2-\max(2\max(\mathcal{H}),1)) + \chi d}\right) .
\end{align*}
Finally optimize over $\chi$ to get $\mathbb{E} \sup_{\theta \in \Theta} \varphi (\theta)^\alpha \leq C_{\alpha,q,\varpi}  T^{-\bar{\eta}}$, 
for $\bar{\eta}=\frac{\varpi \alpha^{2}}{2(\alpha \varpi+2d)}(2-(2 \max(\mathcal{H}) \vee 1))$.
\end{proof}

\section{Application to fractional Ornstein-Uhlenbeck processes}\label{sec:appfOU}

In this section, we prove the results of Section \ref{sec:app-OU} and provide numerical experiments to illustrate the convergence of the estimators in the case of fractional Ornstein-Uhlenbeck processes. We first prove the identifiability assumption for the fractional Ornstein-Uhlenbeck (OU) process in Section \ref{subsec:simplif} and Section \ref{sec:Hknown-or-xiknown}, then for a family of small perturbations of the fractional OU process in Section \ref{sec:OU-perturbation} and finally, in Section \ref{sec:numerical}, we provide numerical results.

\subsection{Identifiability assumption: proof of Proposition \ref{prop:identifOU}}\label{subsec:simplif}

The proof of Proposition \ref{prop:identifOU} is based on the injectivity of a specific function, as stated in the following Lemma (the proof is given in Section \ref{sec:Hknown-or-xiknown}).
\begin{lemma}\label{lem:injectivity}
 Assume one of the three cases $\theta= (\xi,H)$, $\theta=(\xi,\sigma)$ or $\theta=(\sigma,H)$, then there exists $h_0 > 0$ such that for $h \in (0, h_0)$, the function $f$ defined by
\begin{align}\label{assump_o-u_two}
   f: \theta \mapsto \begin{pmatrix}
  \sigma^2  H \Gamma(2H) \xi^{-2H} \\
   \sigma^2  \Gamma(2H+1)\frac{\sin(\pi H)}{\pi} \int_0^{\infty} \cos(h x) \frac{x^{1-2H}}{\xi^2 + x^2} dx\\
   \end{pmatrix} 
   \end{align}
is one-to-one.
\end{lemma}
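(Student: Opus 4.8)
The plan is to recognise the two coordinates of $f$ as the stationary variance and the lag-$h$ covariance of the fractional Ornstein--Uhlenbeck process, and to exploit the exact scaling in $h$ that this produces. First I would record the elementary identity $\int_0^\infty \frac{x^{1-2H}}{\xi^2+x^2}\,dx=\frac{\pi}{2\sin(\pi H)}\xi^{-2H}$, valid for $H\in(0,1)$, which shows that the first coordinate $f_1(\theta)=\sigma^2 H\Gamma(2H)\xi^{-2H}$ is exactly the $h=0$ value of (the integral in) the second coordinate. Since $f$ is one-to-one iff $(f_1,f_1-f_2)$ is, I would replace $f_2$ by
\[
f_1-f_2=\sigma^2\,\Gamma(2H+1)\frac{\sin(\pi H)}{\pi}\int_0^\infty (1-\cos(hx))\frac{x^{1-2H}}{\xi^2+x^2}\,dx ,
\]
and rescale $v=hx$ to extract the $h$-dependence:
\[
f_1-f_2=\frac{\sigma^2}{2}\,h^{2H}\,\frac{J(\xi h,H)}{J(0,H)},\qquad J(b,H):=\int_0^\infty \frac{(1-\cos v)\,v^{1-2H}}{b^2+v^2}\,dv ,
\]
where $J(0,H)=\int_0^\infty(1-\cos v)v^{-1-2H}\,dv=\frac{\pi}{2\Gamma(2H+1)\sin(\pi H)}$, so that the prefactor collapses to the constant $\sigma^2/2$. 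The structural facts I would then establish are the uniform estimate $J(b,H)/J(0,H)=1+O(b^{2-2H})$ as $b\to0$ (uniform over $H$ in the compact set $\mathcal H$), and that the normalised ratio $\frac{f_1-f_2}{f_1}=\Psi(\xi h,H)$, with $\Psi(b,H):=\frac{b^{2H}}{\Gamma(2H+1)}\frac{J(b,H)}{J(0,H)}$, depends on $(\xi,h)$ only through the product $b=\xi h$.

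With this reduction the two cases in which a single parameter varies become one-dimensional monotonicity statements. When $H$ is known (case $\theta=(\xi,\sigma)$) I would show $b\mapsto\Psi(b,H)$ is strictly increasing near $0$: differentiating, $\partial_b\log\Psi(b,H)=\frac{2H}{b}+\partial_b\log\frac{J(b,H)}{J(0,H)}=\frac{2H}{b}(1+o(1))>0$, the correction being of smaller order than $1/b$. Hence $\Psi(\xi h,H)$ determines $\xi$ for $h<h_0$ small (so that $\xi h$ stays in this neighbourhood for all $\xi\in\Xi$), after which $f_1$ determines $\sigma$. When $\xi$ is known (case $\theta=(\sigma,H)$) the argument $b=\xi h$ is fixed and small, and I would instead show $H\mapsto\Psi(\xi h,H)$ is strictly monotone via $\partial_H\log\Psi(b,H)=2\log b-2\psi(2H+1)+\partial_H\log\frac{J(b,H)}{J(0,H)}=2\log(\xi h)(1+o(1))<0$, the term $2\log(\xi h)\to-\infty$ dominating uniformly over $H\in\mathcal H$; this determines $H$, and then $f_1$ determines $\sigma$.

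The genuinely two-dimensional case $\theta=(\xi,H)$ (with $\sigma$ known) is the main obstacle, since no single coordinate isolates one parameter. Here I would use the monotonicity of $b\mapsto\Psi(b,H)$ just established (uniformly in $H$) to solve $\Psi(\xi h,H)=\rho^\ast$ for a continuous branch $\xi=\xi(H)$ matching the observed ratio $\rho^\ast=(f_1-f_2)/f_1$ at $\theta_0$, and then reduce injectivity to strict monotonicity of the single function $H\mapsto\phi(H):=f_1(\xi(H),H)$. Asymptotically $\phi(H)\sim\frac{\sigma^2}{2}h^{2H}/\rho^\ast$, which is strictly monotone in $H$ for $h<1$ because $h^{2H}$ is; the point is that the exact cancellation making the leading coefficient of $f_1-f_2$ equal to $\sigma^2/2$ (independent of $\xi,H$) forces any two parameters with the same image to satisfy $h^{2(H_1-H_2)}=1+O(h^{2-2\max\mathcal H})$, hence $|H_1-H_2|=O(h^{2-2\max\mathcal H}/|\log h|)\to0$, so that a local-injectivity statement suffices globally. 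Equivalently, one checks that the Jacobian of $(\log f_1,\log(f_1-f_2))$ has determinant $-\frac{4H}{\xi}\log h\,(1+o(1))$, bounded away from $0$ for small $h$ uniformly on the compact parameter set.

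The hard part throughout is the uniform control, as $b\to0$ and over $H$ in the compact set $\mathcal H\subset(0,1)$, of the ratio $J(b,H)/J(0,H)$ and of its $b$- and $H$-derivatives: one must show $\partial_b\log\!\big(J(b,H)/J(0,H)\big)=o(1/b)$ and $\partial_H\log\!\big(J(b,H)/J(0,H)\big)=o(|\log b|)$, so that in every case the explicit leading term ($2H/b$, or $2\log b$, or the $h^{2H}$ factor) strictly dominates. These estimates follow from differentiating under the integral sign in $J$ and from the scaling of the resulting integrals near $v=0$ (where $1-\cos v\sim v^2/2$) and near infinity, but they require care because several naive moment integrals diverge, which is exactly why the dependence on $h$ enters through the fractional power $h^{2H}$ rather than being analytic. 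Once these uniform bounds are in place, the three cases close as above.
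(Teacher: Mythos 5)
Your proposal is correct in substance but takes a genuinely different route from the paper. The paper keeps the coordinates $(f_1,f_2)$ and, in each of the three cases, reduces injectivity to strict monotonicity of an explicit one-dimensional function built from the oscillatory integral $\int_0^\infty \cos(\xi h x)\frac{x^{1-2H}}{1+x^2}\,dx$; its proofs then wrestle directly with these oscillatory integrals (subtracting the lag-zero identity, splitting at $\beta_H=e^{\frac{\pi}{2}\cot(\pi H)}$, treating integrals that are only semi-convergent when $H\le 1/2$) to produce lower bounds of the form $C h^{2H}|\log h|$ dominating competing terms of order $h^{2H}$ and $h^2$. You instead pass to $(f_1,f_1-f_2)$, i.e.\ replace the covariance by the increment variance, and the substitution $v=hx$ extracts the exact scaling $f_1-f_2=\frac{\sigma^2}{2}h^{2H}\,J(\xi h,H)/J(0,H)$ with the non-oscillatory, positive kernel $J(b,H)=\int_0^\infty (1-\cos v)\,v^{1-2H}(b^2+v^2)^{-1}\,dv$, so that the normalised ratio $\Psi$ depends on $(\xi,h)$ only through $b=\xi h$. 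Your key uniform estimates ($J(b,H)/J(0,H)=1+O(b^{2-2H})$, $\partial_b\log\bigl(J(b,H)/J(0,H)\bigr)=o(1/b)$, $\partial_H\log\bigl(J(b,H)/J(0,H)\bigr)=o(|\log b|)$) are correct and settle the cases $\theta=(\xi,\sigma)$ and $\theta=(\sigma,H)$ cleanly: your dominant terms $2H/b$ and $2\log b$ are exactly the analogues of the paper's $Ch^{2H}|\log h|$ bounds. What your route buys is the complete disappearance of the oscillation and semi-convergence difficulties; what the paper's buys is that all three cases, including $(\xi,H)$, become uniform one-dimensional monotonicity statements with every estimate carried out explicitly.

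Two steps of your $(\xi,H)$ case need repair, though both are fixable with estimates you already have. First, ``asymptotically $\phi(H)\sim \frac{\sigma^2}{2}h^{2H}/\rho^\ast$, hence strictly monotone'' is not a valid inference: asymptotic equivalence to a monotone function does not imply monotonicity. You must differentiate $\phi$ along the branch, where implicit differentiation of $\Psi(\xi(H)h,H)=\rho^\ast$ gives $\xi'(H)=O(|\log h|)$, and verify that the induced error $O(h^{2-2H}|\log h|)$ in $J(\xi(H)h,H)/J(0,H)$ is still dominated by the main term $2\log h$; it is, and this is precisely the shape of the paper's argument, where the parasitic term $g'_{a,2}=O(h^{2H})$, coming from the $H$-dependence of $\xi(H)$, must be beaten by $C_1 h^{2H}|\log h|$. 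Second, in your alternative Jacobian route, a nonvanishing Jacobian on a compact set gives only local injectivity; to conclude you need your closeness estimate (which indeed also forces $|\xi_1-\xi_2|\to 0$ through the $f_1$-equation, a step worth writing out) together with a local injectivity radius that is uniform in $h$. The latter requires uniformly bounded second derivatives of $(\log f_1,\log(f_1-f_2))$ --- true, by the same differentiation under the integral sign, because the $\log h$ growth sits only in first-order derivatives, so the ratio $|\det DF|/(\|DF\|\,\|D^2F\|)$ stays of order one --- but this must be stated, since the determinant estimate alone does not suffice.
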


For the fractional OU process, recall that the stationary measure follows the Gaussian distribution given by \eqref{eq:fOUinvariant}.
Furthermore, the processes $\bar{U}_{.+ih}^\theta$ are also Gaussian with the same law. The correlation between these processes is given by (see \cite[Eq (2.2)]{cheridito2003fractional}):
\begin{align}\label{eq:fOUcov}
    \mathbb{E}(\bar{U}^\theta_t \bar{U}^\theta_{t+ih}) = \sigma^2 \frac{\Gamma(2H+1) \sin(\pi H)}{\pi} \int_{0}^{\infty} \cos(ih x) \frac{x^{1-2H}}{\xi^2 + x^2} dx .
\end{align}
Now for $\theta_1,\theta_2$ in $\Theta$, there is
\begin{align*}
   d_{CF,p}(\mu_{\theta_1},\mu_{\theta_2})^2 
 & = \int_{\R^2} \left( \EE e^{i \langle \chi, (\bar{U}_t^{\theta_1},\bar{U}_{t+h}^{\theta_1}-\bar{U}_t^{\theta_1})\rangle}  - \EE e^{i \langle \chi, (\bar{U}_t^{\theta_2},\bar{U}_{t+h}^{\theta_2}-\bar{U}_t^{\theta_2})\rangle}\right)^2 g_p(\chi) d \chi .
\end{align*}
Since the process $(\bar{U}_{\cdot}^\theta, \bar{U}^\theta_{\cdot+h}-\bar{U}^\theta_{\cdot})$ is Gaussian and stationary, it comes:
\begin{align*}
   d_{CF,p}(\mu_{\theta_1},\mu_{\theta_2}) = 0  ~~\text{ iff }~~
   \begin{array}{cc}
       \mathbb{E}(\bar{U}_0^{\theta_1})^2 = \mathbb{E}(\bar{U}_0^{\theta_2})^2 \\
        \mathbb{E}\left(\bar{U}_0^{\theta_1}(\bar{U}_{h}^{\theta_1}-\bar{U}_0^{\theta_1})\right) = \mathbb{E}\left(\bar{U}_0^{\theta_2}(\bar{U}_{h}^{\theta_2}-\bar{U}_0^{\theta_2})\right) %
   \end{array}
\end{align*}
which thus reads
\begin{align*}
   d_{CF,p}(\mu_{\theta_1},\mu_{\theta_2}) = 0 ~\text{iff}~
   \begin{array}{cc}
       \mathbb{E}(\bar{U}_0^{\theta_1})^2 = \mathbb{E}(\bar{U}_0^{\theta_2})^2 \\
        \mathbb{E}\left(\bar{U}_0^{\theta_1}\bar{U}_{h}^{\theta_1}\right) = \mathbb{E}\left(\bar{U}_0^{\theta_2}\bar{U}_{h}^{\theta_2}\right) %
   \end{array}
   .
\end{align*}
In view of \eqref{eq:fOUinvariant} and \eqref{eq:fOUcov}, assumption \ref{asmp:identif} becomes equivalent to the injectivity of the function $f$ defined in \eqref{assump_o-u_two}, which is therefore given by Lemma \ref{lem:injectivity}.

\begin{remark}\label{rmk:injectivity-allparameters}
In \cite{haress2020estimation}, the authors studied fractional OU processes and proposed a similar estimator for $(\xi,\sigma,H)$ simultaneously. Similarly to our case, for a consistency argument to hold, they are left to study the injectivity of
\begin{align}\label{eq:f-3D}
f:(\xi,\sigma,H) \mapsto
 \begin{pmatrix}
   \sigma^2  H \Gamma(2H) \xi^{-2H} \\
    \sigma^2 \Gamma(2H+1)\frac{\sin(\pi H)}{\pi} \int_0^{\infty} \cos(h x) \frac{x^{1-2H}}{\xi^2 + x^2} dx\\
   \sigma^2   \Gamma(2H+1)\frac{\sin(\pi H)}{\pi} \int_0^{\infty} \cos(2h x) \frac{x^{1-2H}}{\xi^2 + x^2} dx\\
   \end{pmatrix} .
\end{align}
The injectivity was not proven but numerical arguments were provided to support this claim.
\end{remark}

\subsection{Injectivity of $f$:  proof of Lemma \ref{lem:injectivity}}\label{sec:Hknown-or-xiknown}

\paragraph{The case $\theta=(\sigma,H)$.} Let $(a,b)$ be in the range of $f$. We will show that the equation
\begin{equation}\label{eq:injectivity-eq-for-2}
\begin{split}
a & = \sigma^2 H \Gamma(2H) \xi^{-2H}  \\
b & = \sigma^2 \Gamma(2H+1) \frac{\sin(\pi H)}{\pi} \xi^{-2H} \int_0^{\infty} \cos(\xi hx) \frac{x^{1-2H}}{1+x^2} dx   ,
\end{split}
\end{equation}
has a unique solution in $\sigma,H$. First, thanks to the first equation, notice that we can write $\sigma^2 = \frac{a \xi^{2H}}{H \Gamma(2H)}$. Injecting this in the second equation we get
\begin{align*}
b \pi & = a \sin(\pi H) \int_0^\infty \cos(\xi h x) \frac{x^{1-2H}}{1+x^2}\, dx:= a g(H)  .
\end{align*}
We will show that the function $g$ is injective. Since $g$ is continuously differentiable, it suffices to show that $g'(H) >0$ for all $H \in \mathcal{H}$.
We have
\begin{align}\label{eq:g'(H)}
g'(H) & = \pi \cos(\pi H) \int_0^\infty \cos(\xi h x) \frac{x^{1-2H}}{1+x^2} dx  - 2 \sin(\pi H) \int_0^{\infty} \cos(\xi h x) \log(x) \frac{x^{1-2H}}{1+x^2} dx  .
\end{align}
By \eqref{eq:fOUinvariant} and \eqref{eq:fOUcov}, we have
\begin{align*}%
  \mathbb{E}(\bar{U}^\theta_t \bar{U}^\theta_{t+0}) = \sigma^2 \frac{\Gamma(2H+1) \sin(\pi H)}{\pi} \int_{0}^{\infty}  \frac{x^{1-2H}}{\xi^2 + x^2} dx = \mathbb{E} (\bar{U}^\theta_t)^2  = \sigma^2 H \Gamma(2H) \xi^{-2H} .
\end{align*}
Using $\Gamma(2H+1) = 2H \Gamma(2H)$ and the change of variables $y=x/\xi$  yields
\begin{align*}
 \sin(\pi H) \int_{0}^{\infty}  \frac{x^{1-2H}}{1 + x^2} dx = \frac{\pi}{2}.
\end{align*}
By differentiating with respect to $H$, we get
\begin{align*}
\pi \cos(\pi H) \int_0^\infty \frac{x^{1-2H}}{1+x^2} dx  - 2 \sin(\pi H) \int_0^{\infty} \log(x) \frac{x^{1-2H}}{1+x^2} dx  = 0 .
\end{align*}
Subtracting this term to $g'(H)$ in \eqref{eq:g'(H)}, we get
\begin{align*}
g'(H) =  \int_0^\infty (1- \cos(\xi h x)) (2 \sin(\pi H) \log(x) - \pi \cos(\pi H) ) \frac{x^{1-2H}}{1+x^2} dx   .
\end{align*}
Let $\beta_H = e^{\frac{\pi \cos(\pi H)}{2 \sin(\pi H)}}$. Then
\begin{align*}
g'(H) &  =  \int_0^{\beta_H} (1- \cos(\xi h x)) (2 \sin(\pi H) \log(x) - \pi \cos(\pi H) ) \frac{x^{1-2H}}{1+x^2} dx   \\ & \quad + \int_{\beta_H}^{\infty} (1- \cos(\xi h x)) (2 \sin(\pi H) \log(x) - \pi \cos(\pi H) ) \frac{x^{1-2H}}{1+x^2} dx .
\end{align*}
For $x\in (0,\beta_{H}]$, $2 \sin(\pi H) \log(x) - \pi \cos(\pi H)\leq 0$. In addition, using $1- \cos(x)  \leq \frac{x^2}{2}$ in the first integral and the change of variables $y = hx$ in the second integral, we get
\begin{align*}
g'(H) & \ge \frac{\xi^2 h^2}{2} \int_0^{\beta_H} x^2  (2 \sin(\pi H) \log(x) - \pi \cos(\pi H) ) \frac{x^{1-2H}}{1+x^2} dx  \\ & \quad + h^{2H} \int_{\beta_H h}^{\infty}  (1- \cos(\xi x))  (2 \sin(\pi H ) \log(x) -2 \sin(\pi H) \log(h) \\ & \quad - \pi \cos(\pi H) ) \frac{x^{1-2H }}{h^2+x^2} dx   .
\end{align*}
For $x \ge \beta_H h$, we have $2 \sin(\pi H ) \log(x) -2 \sin(\pi H) \log(h) - \pi \cos(\pi H)  \ge 0$. Assuming $h \leq 1$, it thus follows that
\begin{align*}
g'(H) & \ge \frac{\xi^2 h^2}{2} \int_0^{\beta_H} x^2  (2 \sin(\pi H) \log(x) - \pi \cos(\pi H) ) \frac{x^{1-2H}}{1+x^2} dx  \\ & \quad + h^{2H} \int_{\beta_H}^{\infty}  (1- \cos(\xi x))  (2 \sin(\pi H ) \log(x) -2 \sin(\pi H) \log(h) \\ & \quad - \pi \cos(\pi H) ) \frac{x^{1-2H }}{h^2+x^2} dx  \\
& \ge \frac{\xi^2 h^2}{2} \int_0^{\beta_H} x^2  (2 \sin(\pi H) \log(x) - \pi \cos(\pi H) ) \frac{x^{1-2H}}{1+x^2} dx  \\ & \quad + h^{2H} \int_{\beta_H}^{\infty}  (1- \cos(\xi x))  (2 \sin(\pi H ) \log(x) - \pi \cos(\pi H) ) \frac{x^{1-2H}}{1+x^2} dx   \\
& \quad + 2 h^{2H} | \log(h) | \sin(\pi H) \int_{\beta_H}^{\infty} (1-\cos(\xi x) ) \frac{x^{1-2H}}{h^2+x^2} dx .
\end{align*}
Since $H \in \mathcal{H}$ and $\xi \in \Xi$, we deduce that there exists $C_1, C_2,C_3 >0$ such that 
\begin{align*}
g'(H) & \ge C_1 h^{2H} | \log(h) | + C_2 h^{2H} - C_3 h^2 .
\end{align*}
Therefore, there exists $C >0$ and $h_0 > 0$ such that for $h \in (0, h_0)$, we have 
\begin{align}\label{eq:g'(H)>h2H}
g'(H) \ge C h^{2H} |\log(h) | > 0.
\end{align}
We have thus proved that $f$ is one-to-one. 

\paragraph{The case $\theta=(\xi,H)$.}Let $(a,b)$ be in the range of $f$. We prove that the following equation has a unique solution in $(\xi, H)$:
\begin{align*}
   a &=  H \Gamma(2H) \xi^{-2H}  \\
   b & =  2H \Gamma(2H) \xi^{-2H} \frac{\sin(\pi H)}{\pi} \int_0^{\infty} \cos( \xi h x) \frac{x^{1-2H}}{1 + x^2} d x ,
\end{align*}
 which is equivalent to solving
 \begin{align*}
     \xi & = \left(\frac{a}{H \Gamma(2H)}\right)^{-\frac{1}{2H}} \\
  b  & =   2 a \frac{\sin(\pi H)}{\pi} \int_0^{\infty} \cos\left(\left(\frac{a}{H \Gamma(2H)}\right)^{-\frac{1}{2H}} h x\right) \frac{x^{1-2H}}{1 + x^2} dx  .
\end{align*}
For the rest of this section, we will focus on the function 
\begin{align*}
    g_a(H) & = \sin(\pi H) \int_0^{\infty} \cos\left(\left(\frac{a}{H \Gamma(2H)}\right)^{-\frac{1}{2H}} h x\right) \frac{x^{1-2H}}{1 + x^2} dx .
\end{align*}
We will show that for all possible values of $a$, $g_a$ is a bijection and therefore there exists a unique $H$ such that $g_a(H) = \frac{\pi b}{2a}$. For this $H$, $\xi$ is then uniquely determined by the equality $\xi = (\frac{a}{H \Gamma(2H)})^{-\frac{1}{2H}}$.
 
We plan to differentiate $g_a$. For $H > 1/2$, the derivative in the $H$ variable of the function $x\mapsto \cos((\frac{a}{H \Gamma(2H)})^{-\frac{1}{2H}} h x) \frac{x^{1-2H}}{1 + x^2}$ is integrable and we get
\begin{align}\label{eq:lower-bound-g}
\begin{split}
g_a'(H)& =  \pi \cos(\pi H) \int_0^{\infty} \cos\left(\left(\frac{a}{H \Gamma(2H)}\right)^{-\frac{1}{2H}} h x\right) \frac{x^{1-2H}}{1 + x^2} dx \\ 
& \quad  - \sin(\pi H) \left[H \mapsto (\frac{a}{H \Gamma(2H)})^{-\frac{1}{2H}} \right]'(H) \ h \int_0^{\infty} x \sin\left(\left(\frac{a}{H \Gamma(2H)}\right)^{-\frac{1}{2H}} h x\right) \frac{x^{1-2H}}{1 + x^2} dx \\
& \quad - 2 \sin(\pi H) \int_0^{\infty} \cos\left(\left(\frac{a}{H \Gamma(2H)}\right)^{-\frac{1}{2H}} h x\right) \log(x) \frac{x^{1-2H}}{1 + x^2} dx \\
&=:  g'_{a,1}(H) + g'_{a,2}(H) + g'_{a,3}(H) .
\end{split}
\end{align}
Unfortunately when $H\leq 1/2$, the integral that appears in $g'_{a,2}(H)$ is not defined in Lebesgue's sense.
However, we have for any $A>1$ that
\begin{align*}
\int_{1}^A \sin(Cx) \frac{x^{2-2H}}{1+x^2} dx = \int_{1}^A \sin(Cx) x^{-2H} dx - \int_{1}^A \sin(Cx) \frac{x^{-2H}}{(1+x^2)} dx. 
\end{align*}
The first integral in the right-hand side converges in Riemann's sense as $A\to\infty$ and the second one converges as a classical Lebesgue's integral. Thus we get that $g'_{a,1}(H) + g'_{a,2}(H) + g'_{a,3}(H)$ is well-defined even for $H\in(0,1/2]$, and then that the equality \eqref{eq:lower-bound-g} also holds for $H\leq1/2$.

Now notice that $g_{a,1}'(H)+g_{a,3}'(H)$ is exactly the term $g'(H)$ handled in the previous case $\theta= (\sigma,H)$ with $\xi \equiv (\frac{a}{H \Gamma(2H)})^{-\frac{1}{2H}}$. We have shown in \eqref{eq:g'(H)>h2H} that there exists $C_1> 0$ and $h_0>0$ such that for $h \in (0, h_0)$,
\begin{align}\label{eq:ga1+ga3}
g_{a,1}'(H)+g_{a,3}'(H) > C_1 h^{2H} | \log(h) | .
\end{align}
We now prove an upper bound on the absolute value of $g_{a,2}'(H)$. Using the change of variable $y = h x$, we have
\begin{align}\label{eq:ga2}
& |g_{a,2}'(H) | \\ & \leq \sin(\pi H) \left| \left[H \mapsto (\frac{a}{H \Gamma(2H)})^{-\frac{1}{2H}} \right]'(H) \right|  h^{2H} \left| \int_0^{\infty}  \sin\left(\left(\frac{a}{H \Gamma(2H)}\right)^{-\frac{1}{2H}} x\right) \frac{x^{2-2H}}{h^2 + x^2} dx \right| .
\end{align}
Let us show that the integral $\mathcal{J} = | \int_0^{\infty}  \sin((\frac{a}{H \Gamma(2H)})^{-\frac{1}{2H}} x) \frac{x^{2-2H}}{h^2 + x^2} dx | $ is bounded uniformly in $h \in (0,h_0)$. Using that $| \sin(x) | \leq x$ for $x\geq 0$ we have for $\alpha_{H} = 2\pi (\frac{a}{H \Gamma(2H)})^{\frac{1}{2H}}$ that
\begin{align*}
\mathcal{J}  & \leq 
 \int_0^{\alpha_{H}}  \left(\frac{a}{H \Gamma(2H)}\right)^{-\frac{1}{2H}}  \frac{x^{3-2H}}{h^2 + x^2} dx  + \left| \int_{\alpha_{H}}^{\infty} \sin\left(\left(\frac{a}{H \Gamma(2H)}\right)^{-\frac{1}{2H}} x\right)  \frac{x^{2-2H}}{h^2 + x^2} dx \right| \\
& \leq  \int_0^{\alpha_{H}}  \left(\frac{a}{H \Gamma(2H)}\right)^{-\frac{1}{2H}}  x^{1-2H} dx \\ & \quad + \left| \int_{\alpha_{H}}^{\infty} \sin\left(\left(\frac{a}{H \Gamma(2H)}\right)^{-\frac{1}{2H}} x\right)  \left( \frac{x^{2-2H}}{h^2 + x^2} -\frac{x^{2-2H}}{x^2}  \right)  dx \right| \\
& \quad + \left| \int_{\alpha_{H}}^{\infty}   \sin\left(\left(\frac{a}{H \Gamma(2H)}\right)^{-\frac{1}{2H}} x\right)  x^{-2H} dx \right|  .
\end{align*}
Hence bounding the sine function by $1$ in the second integral and using the change of variables $y=(\frac{a}{H \Gamma(2H)})^{-\frac{1}{2H}}x$ in the third, we get
\begin{align*}
\mathcal{J}   & \leq \left(\frac{a}{H \Gamma(2H)}\right)^{-\frac{1}{2H}} \frac{\alpha_{H}^{2-2H}}{2-2H}  + h^2 \left| \int_{\alpha_{H}}^\infty \sin\left(\left(\frac{a}{H \Gamma(2H)}\right)^{-\frac{1}{2H}} x\right) \frac{x^{-2H}}{h^2+x^2} dx \right| \\ 
& \quad+ \left(\frac{a}{H \Gamma(2H)}\right)^{-\frac{2H-1}{2H}}  \left| \int_{2\pi}^\infty \frac{\sin(x)}{x^{2H}} dx \right| \\
& \leq \left(\frac{a}{H \Gamma(2H)}\right)^{-\frac{1}{2H}} \frac{\alpha_{H}^{2-2H}}{2-2H}  + h^2  \int_{\alpha_{H}}^\infty  x^{-2-2H} dx  +  \left(\frac{a}{H \Gamma(2H)}\right)^{\frac{1-2H}{2H}}  \left| \int_{2\pi}^\infty \frac{\sin(x)}{x^{2H}} dx \right|  \\
& \leq  \left(\frac{a}{H \Gamma(2H)}\right)^{-\frac{1}{2H}} \frac{1}{2-2H}  + h^2 \frac{\alpha_{H}^{-1-2H}}{1+2H} +  \left(\frac{a}{H \Gamma(2H)}\right)^{\frac{1-2H}{2H}}   \left| \int_{2\pi}^\infty \frac{\sin(x)}{x^{2H}} dx \right|.
\end{align*}
Writing $ \int_{2\pi}^\infty \frac{\sin(x)}{x^{2H}} dx $ as the sum of positive terms
\begin{align}
\int_{2\pi}^\infty \sin(x) x^{-2H} dx & = \sum_{k=0}^{\infty} \int_{2k\pi}^{(2k+1)\pi} \frac{\sin(x)}{x^{2H}} dx+ \int_{(2k+1) \pi}^{(2k+2)\pi}  \frac{\sin(x)}{x^{2H}} dx \nonumber  \\
& = \sum_{k=0}^{\infty} \int_{2k \pi}^{(2k+1)\pi} \sin(x) ( \frac{1}{x^{2H}}-\frac{1}{(x+\pi)^{2H}} ) dx\label{eq:decomposcint}  \\ 
& \leq \sum_{k=1}^\infty \frac{\pi}{(2k\pi)^{2H}} \left(1-\frac{1}{(1+k^{-1})^{2H}}\right), \nonumber 
\end{align}
we get that the last sum can be bounded uniformly for $H\in\mathcal{H}$. Thus, $\mathcal{J}$ can be bounded uniformly for $H\in\mathcal{H}$ by a constant $C_{2}>0$. From \eqref{eq:ga2}, we thus get
\begin{align*}
 |g_{a,2}'(H) | & \leq C_{2}\, h^{2H} \sin(\pi H) \left| [H \mapsto (\frac{a}{H \Gamma(2H)})^{-\frac{1}{2H}} ]'(H) \right|. 
\end{align*}
Since the mapping $H \mapsto (\frac{a}{H \Gamma(2H)})^{-\frac{1}{2H}}$ is smooth on $(0,+\infty)$ and $\mathcal{H}$ is  a compact subset of $(0,1)$, we deduce that there exists a constant $\tilde{C}_2>0$ such that
\begin{align*}
 |g_{a,2}'(H) |  \leq \tilde{C}_2 h^{2H} . 
\end{align*}
Combining this with \eqref{eq:ga1+ga3}, we conclude that for any $h \in (0,h_0)$,
\begin{align*}
g_{a,1}'(H)+g_{a,2}'(H)+ g_{a,3}'(H)  \ge C_1 h^{2H} | \log(h)| - \tilde{C}_2 h^{2H} .
\end{align*}
Hence, there exists $C, h_1 >0$ such that for any $h \in (0,h_1)$, we have
\begin{align*}
g_{a,1}'(H)+g_{a,2}'(H)+ g_{a,3}'(H)  \ge C h^{2H} | \log(h) |  > 0 .
\end{align*}
This proves that $g_a$ is a bijection. 

\paragraph{The case $\theta=(\xi,\sigma)$.} As before, for $(a,b)$ in the range of $f$, we need to show that 
\eqref{eq:injectivity-eq-for-2} has a unique solution in $\xi, \sigma$, for a given $H$. Notice that \eqref{eq:injectivity-eq-for-2} is equivalent to
\begin{align*}
a & =\sigma^2 H \Gamma(2H) \xi^{-2H}  \\
b & =a \frac{\sin(\pi H)}{\pi} \int_0^{\infty} \cos(\xi h x) \frac{x^{1-2H}}{1+x^2}  dx =: a \frac{\sin(\pi H)}{\pi}  \tilde{g}(\xi)  .
\end{align*}
Thus, it is enough to show that $\tilde{g}'(\xi) <0$ for all $\xi$.  We have
\begin{align*}
\tilde{g}'(\xi) = -h \int_0^{\infty} \sin(\xi h x) \frac{x^{2-2H}}{1+x^2} dx .
\end{align*}
Let $C$ be a constant that may depend only on $\Theta$ and may change from line to line. We decompose $\tilde{g}'(\xi)$ as
\begin{align*}
\tilde{g}'(\xi) & = -h \int_0^{1} \sin(\xi h x) \frac{x^{2-2H}}{1+x^2} dx  -h \int_1^{\infty} \sin(\xi h x) \frac{x^{2-2H}}{1+x^2} dx.
\end{align*}
Using $|\sin(x)| \leq x$ in the first integral, we get
\begin{align*}
\tilde{g}'(\xi)  & \leq C h^2 - h \int_1^{\infty} \sin(\xi h x) \frac{x^{2-2H}}{1+x^2} dx \\
& = C h^2 + h \int_1^{\infty} \sin(\xi h x) \frac{x^{-2H}}{1+x^2} dx - h \int_1^\infty \sin(\xi h x) x^{-2H} dx .
\end{align*}
Since $\xi$ is in a compact, we use in the first integral that $|\sin(\xi h x)|\leq C h x$. As for the second integral, we use the change of variables $y=\xi h x$ to get
\begin{align*}
\tilde{g}'(\xi)  & \leq C h^2 + C h^{2} \int_1^{\infty} \frac{x^{1-2H}}{1+x^2} dx- \xi^{2H-1} h^{2H} \int_{\xi h}^\infty \sin(x) x^{-2H} dx  \\
& \leq C h^2 + \xi^{2H-1} h^{2H} \int_0^{\xi h} \sin(x) x^{-2H} dx - \xi^{2H-1} h^{2H} \int_{0}^\infty \sin(x) x^{-2H} dx .
\end{align*} 
Using the inequality $|\sin(x)| \leq x$ and the fact that $\xi$ is in a compact, we have $|\xi^{2H-1} h^{2H} \int_0^{\xi h} \sin(x) x^{-2H} dx| \leq C h^2$. As for the last term, we write $\int_{0}^\infty \sin(x) x^{-2H} dx =  \int_{0}^{2\pi} \sin(x) x^{-2H} dx + \int_{2 \pi}^{\infty} \sin(x) x^{-2H} dx$. 
The second term is positive by \eqref{eq:decomposcint}, therefore
\begin{align*}
\int_{0}^\infty \sin(x) x^{-2H} dx & \ge \int_{0}^{2\pi} \sin(x) x^{-2H} dx  = \int_0^\pi  \sin(x) x^{-2H} dx  + \int_{\pi}^{2 \pi} \sin(x) x^{-2H} dx \\
& = \int_0^\pi  \sin(x) ( \frac{1}{x^{2H}} - \frac{1}{(x+\pi)^{2H}}) dx > 0.
\end{align*}
Since the last integral is continuous in $H$, it follows that $\inf_{\theta \in \Theta} \xi^{2H-1}\int_{0}^\infty \sin(x) x^{-2H} dx \geq c>0$ and we get
\begin{align*}
\tilde{g}'(\xi)  & \leq  C h^2 - c\,  h^{2H} .
\end{align*}
It follows that there exists $h_0>0$ such that for any $h<h_0$, we have $g'(\xi) < 0$.

\subsection{Strong identifiability assumption: proof of Lemma \ref{lem:strongidentif-O-U} and Lemma \ref{lem:O-U-perturbation}}\label{sec:OU-perturbation}
In this section, we prove Lemma \ref{lem:strongidentif-O-U} and Lemma \ref{lem:O-U-perturbation}, that is the strong identifiability assumption for the fractional Ornstein-Uhlenbeck process and small perturbations of this process.

\begin{proof}[Proof of Lemma \ref{lem:strongidentif-O-U}]
The condition $p \ge 1$ ensures that $d_{CF,p}$ is well-defined in dimension $d=1$. When $\theta= \xi$, this lemma was proved in \cite[Lemma 6.2]{panloup2020general}.

Let us deal with the case $\theta=H$. We have already seen that $\mu_{\theta}=\mathcal{N}\left( 0,\sigma^2 H \Gamma(2H) \xi^{-2H} \right)$. Taking into account the expression of $d_{CF,p}$ in \eqref{eq:d_cf} yields
\begin{align*}
d_{CF,p} ( \mu_{\theta_1}, \mu_{\theta_2})^2 = \int_\mathbb{R} \left( \exp\left(-\frac{\sigma^2 H_1 \Gamma(2H_1)}{2 \xi^{2H_1}} \eta^2\right) -  \exp\left(-\frac{\sigma^2 H_2 \Gamma(2H_2)}{2 \xi^{2H_2}} \eta^2\right) \right)^2 g_p(\eta) d \eta  .
\end{align*}
Let $g(H,\eta)=\exp(-\frac{\sigma^2 H \Gamma(2H)}{2 \xi^{2H}} \eta^2)$, then we have
\begin{align}\label{eq:lowerbound-d}
d_{CF,p} ( \mu_{\theta_1}, \mu_{\theta_2})^2 = \int_\mathbb{R} \left( \int_{H_1}^{H_2} \partial_H g(H, \eta)\, dH \right)^2 g_p(\eta) d \eta  .
\end{align}
We will show that $\partial_H g(H,\eta)$ is bounded away from $0$. We have
\begin{align*}
\partial_H g(H,\eta) =\frac{ \sigma^2 \eta^2 }{2\xi^{2H}} \exp\left(-\frac{\sigma^2 H \Gamma(2H)}{2 \xi^{2H}} \eta^2\right) \left( \Gamma(2H)+2H\Gamma'(2H) - 2H\Gamma(2H) \log(\xi) \right) .
\end{align*} 
Under \eqref{eq:xiawayfrom0}, we have $| \Gamma(2H)+2H\Gamma'(2H) - 2H\Gamma(2H) \log(\xi) | > 0$ for all $H \in \mathcal{H}$. Hence, there exists two positive constants $c,C$ that depend only on $\Theta$ such that, we have 
$$|\partial_H g(H,\eta)| \ge C \eta^2 \exp(-c \eta^2) ~ \text{for all}~ H \in [m_{\mathcal{H}},M_{\mathcal{H}}].$$
Using this in \eqref{eq:lowerbound-d}, it follows that
\begin{align*}
d_{CF,p} ( \mu_{\theta_1}, \mu_{\theta_2})^2 \ge C^2 |H_1-H_2|^2  \int_\mathbb{R} \eta^2 \exp(-2c \eta^2)  g_p(\eta) d \eta  .
\end{align*}

A similar analysis can be done when $\theta= \sigma$. In this case, one needs to show that the derivative of $g(\sigma)=\exp(-\frac{\sigma^2 H \Gamma(2H)}{2 \xi^{2H}} \eta^2)$ is bounded away from $0$. Since there is
\begin{align*}
g'(\sigma) = - \frac{2 \sigma H \Gamma(2H)}{2 \xi^{2H}} \eta^2 \exp\left(-\frac{\sigma^2 H \Gamma(2H)}{2 \xi^{2H}} \eta^2\right) ,
\end{align*}
and all the parameters live in compact sets that do not contain $0$, there exists positive constants $\tilde{C},\tilde{c}$ such that all $\sigma \in [m_\Sigma, M_\Sigma]$, we have $g'(\sigma)< -\tilde{C} \eta^2 \exp(-\tilde{c} \eta^2)$. Hence we can conclude as in the previous case.
\end{proof}
%

\begin{proof}[Proof of Lemma \ref{lem:O-U-perturbation}]
The case $\theta=\xi$ was considered in \cite[Proposition 6.4]{panloup2020general} under the same assumptions. Our proof for $\theta=H$ or $\theta=\sigma$ will be very similar. More specifically, we decompose $d_{CF,p}(\mu_{\theta_1}^\lambda, \mu_{\theta_2}^\lambda)$ as
\begin{align}\label{eq:dcf-decomp-lower-b}
d_{CF,p}(\mu_{\theta_1}^\lambda, \mu_{\theta_2}^\lambda) \ge I_3^{1/2} - \left(  I_2^{1/2} + I_{11}^{1/2} + I_{12}^{1/2} \right) ,
\end{align}
where 
\begin{align*}
I_{1j} & = \int_{\mathbb{R}} \left( \mathbb{E}[\exp(i \eta \bar{U}^{\lambda, \theta_j}_t) ] - \mathbb{E}[\exp(i \eta U^{\lambda, \theta_j}_t) ] \right)^2 g_p(\eta) d\eta, \quad j=1,2, \\
I_2 & =  \int_{\mathbb{R}} \left( \mathbb{E}[\exp(i \eta U^{\lambda, \theta_1}_t) ] - \mathbb{E}[\exp(i \eta U^{0, \theta_1}_t)  - \mathbb{E}[\exp(i \eta U^{\lambda, \theta_2}_t) ] + \mathbb{E}[\exp(i \eta U^{0, \theta_2}_t)   ] \right)^2 g_p(\eta) d\eta, \\
I_3 & = \int_{\mathbb{R}} \left( \mathbb{E}[\exp(i \eta U^{0, \theta_1}_t) ] - \mathbb{E}[\exp(i \eta U^{0, \theta_2}_t) ] \right)^2 g_p(\eta) d\eta  .
\end{align*}
In the above definition of $I_{1j},\, I_{2}$ and $I_{3}$, $t$ is an arbitrary large time to be determined later. Our goal is to bound $I_3$ from below and bound $I_2$ and $I_{1j}$ from above.

\paragraph{Lower bound for $I_3$.} 
We bound $I_3$ from below as follows:
\begin{align*}
I_3 & \ge  \frac{1}{3}\int_{\mathbb{R}} \left( \mathbb{E}[\exp(i \eta \bar{U}^{0, \theta_1}_t) ] - \mathbb{E}[\exp(i \eta \bar{U}^{0, \theta_2}_t) ] \right)^2 g_p(\eta) d\eta \\ 
& \quad - \bigg(  \int_{\mathbb{R}} \left( \mathbb{E}[\exp(i \eta \bar{U}^{0, \theta_1}_t) ] - \mathbb{E}[\exp(i \eta U^{0, \theta_1}_t) ] \right)^2 g_p(\eta) d\eta \\ 
& \quad\quad +  \int_{\mathbb{R}} \left( \mathbb{E}[\exp(i \eta \bar{U}^{0, \theta_2}_t) ] - \mathbb{E}[\exp(i \eta U^{0, \theta_2}_t) ] \right)^2 g_p(\eta) d\eta \bigg) \,
\end{align*}
Now, by Lemma \ref{lem:strongidentif-O-U}, there exists a constant $c_1$ such that the first term is bounded from below by $c_1 | \theta_1-\theta_2 |^2$.  In view of Proposition \ref{prop:general-results}, the other terms are bounded by $C e^{-c t}$. Choosing $t$ large enough, we can thus bound $I_3$ from below by
\begin{align*}
I_3 \ge \frac{c_1}{6} |\theta_1- \theta_2 |^2 .
\end{align*}

\paragraph{Upper bound for $I_{1j}$.}
The term $I_{1j}$ also represents a distance between the solution of \eqref{eq:O-U-perturbation} and its stationary version. Under the assumption that $b_\xi$, $\partial_\xi b_\xi$ and $\partial_y b_\xi$ are bounded and $\lambda$ is small enough, the drift $-\xi . + \lambda b_\xi(.)$ satisfies assumption \ref{asmp:drift}. Theorefore by Proposition~\ref{prop:general-results} we have $I_{1j} \leq C e^{-ct}$. Setting $t$ large enough we get that
\begin{align*}
I_{1j} \leq \frac{c_1}{16} | \theta_1 - \theta_2 |^2 .
\end{align*}

\paragraph{Upper bound for $I_2$.} It was shown in \cite[Equation (6.17)]{panloup2020general} under $p>3/2$ that
\begin{align*}
I_2 & \leq C \mathbb{E} \Biggl[ \left( |U^{0, \theta_2}_t - U^{0, \theta_1}_t | + | \Delta_R (U_t) | \right)   \left(  |U^{\lambda, \theta_2}_t - U^{0, \theta_2}_t | + | \Delta_R (U_t) |  \right) + | \Delta_R(U_t) | \Biggr] ,
\end{align*}
where $\Delta_R(U_t)$ are the rectangular increments defined by
\begin{align*}
\Delta_R(Y_t) = U_t^{\lambda, \theta_1} - U_t^{0, \theta_1} - U_t^{\lambda, \theta_2} + U_t^{0, \theta_2} .
\end{align*}
But $\Delta_R(U_t)=0$ when $\theta= H$ or $\theta=\sigma$. So 
$I_2 \leq \lambda^2 C \mathbb{E}( | U_t^{0, \theta_2}-U_t^{0, \theta_1} |^2 \| \partial_\lambda U^{\lambda, \theta} \|^2_{\infty})$.
It was also proved in \cite[equation (6.18) and thereafter]{panloup2020general} that when $b_\xi$ and $\partial_y b_\xi$ are both bounded and $\lambda \leq m_{\Xi}(1-\epsilon)$, there is $\| \partial_\lambda U^{\lambda, \theta} \|^2_{\infty} \leq c_{m_{\Xi},M_{\Xi},\epsilon}$. Hence we deduce that
\begin{align*}
I_2 \leq C_{m_{\Xi},M_{\Xi},\epsilon} \lambda^2 \mathbb{E} | U_t^{0, \theta_2}-U_t^{0, \theta_1} |^2 .
\end{align*}
Now if $\theta=H$, we get from the same computation as from the stationary case (see \cite[Lemma A.1]{HRarxiv}) that
\begin{align*}
\mathbb{E} \left| U_t^{0, \theta_2}-U_t^{0, \theta_1} \right|^2  \leq C |\theta_2-\theta_1 |^2 ,
\end{align*}
where $C$  does not depend on $t$. When $\theta=\sigma$, 
\begin{align*}
\mathbb{E}\left| U_t^{0, \theta_2}-U_t^{0, \theta_1} \right|^2   & = \mathbb{E} \left( \int_0^t (\theta_2-\theta_1) e^{-t+u} d B_u \right)^2 \\
& \leq C (\theta_2-\theta_1)^2  .
\end{align*}
Thus our bound on $I_2$ becomes
$I_2 \leq C_{m_{\Xi},M_{\Xi},\epsilon} \lambda^2  | \theta_2  - \theta_1 |^{2}$.
Finally, choose $\lambda$ small enough so that
\begin{align*}
I_2 \leq \frac{c_1}{16}  |\theta_1-\theta_2|^2 .
\end{align*}
To finish the proof, combine the bounds obtained for $I_{1j},\, I_{2}$ and $I_{3}$ into \eqref{eq:dcf-decomp-lower-b}.
\end{proof}

\subsection{Numerical results}\label{sec:numerical}

In this section, we provide numerical examples to illustrate the main results of this paper. We only deal with the $1d$ OU model defined in \eqref{eq:fO-U} that starts from $0$,  as it already raises numerous questions about the numerical implementation. We explain at the end how one might extend our approach to more general SDEs of the form \eqref{eq:fsde0}. 
\paragraph{Simulated data.} 
The fractional OU process cannot be simulated exactly. Therefore, we have chosen to approximate it by the Euler scheme with very small time-step $\underline{h}$ (namely $\underline{h} = 10^{-3} $).
Recall that the $L^2$-distance between the true SDE and the Euler scheme is of order $\underline{h}^H$ when both are defined with the same fBm. This result holds independently of the time horizon when the drift is contractive, see e.g. \cite[Proposition 3.7 (i)]{panloup2020general}. 
Recall also that the fBm can be simulated through the Davies-Harte method. Therefore, up to the approximation of the true SDE, we now assume that we are given a sequence $(U_{k h})_{k \ge 0}$, where $(U_t)_{t \ge 0}$ is a solution to \eqref{eq:fO-U} with a given $\theta_0$. Then we create from this path a subsequence of augmented observations $(X_{t_k} )_{k=1,\dots,n}$ as defined in \eqref{eq:defX}. Here we consider the linear transformation to be the simple increments as in \eqref{eq:simpleinc}.
Furthermore, we consider the time-steps $t_k$ to be of the form $t_k = k h$, which means in particular that we assume $h$ to be of the form $k_0 \underline{h}$ with $k_0 \in \mathbb{N}_*$ (namely $k_0= 100$).
Moreover, to compare the estimators \eqref{eq:thetan-0} and \eqref{eq:thetan-02}, for $\gamma = 0.1$, we simulate $N$ of the Euler approximation $U^{\theta,\gamma}$ defined in \eqref{eq:euler-scheme} with $b_{\theta}(x)=-\theta x$. Then we create from this path a subsequence of augmented Euler approximations $(X^{\theta,\gamma}_{k \gamma})_{k=1,\dots, N}$ as defined in \eqref{eq:Xthetagamma} taking again the linear transformations as simple increments.
For the rest of this section, we will use the following terminology:
\begin{itemize}
\item One-dimensional case: This is when we only use the first component of $X^{\theta_0}$ (i.e. $U^{\theta_0}$) as observations. This means that we are only interested in estimating one parameter (either the drift, the diffusion or the Hurst parameter) and we assume the other two are known. There are thus three choices to consider.
\item Two-dimensional case: This is when we want to estimate two parameters and therefore take the first two components of $X^{\theta_0}$ as observations. There are also three choices to consider.
\item Three-dimensional case: This is when we want to estimate all the parameters and therefore consider all the components included in $X^{\theta_0}$. 
\item In the simulations, we shall refer to the $\widehat{\theta}_{n}$ as the oracle and $\widehat{\theta}_{n,N,\gamma}$ as the estimate.
\end{itemize}
\paragraph{Computation of the distance between the empirical measures.} In practice, to implement the estimator \eqref{eq:thetan-0}, one needs to compute the distance $d \in \mathcal{D}_p$ between the average of Dirac measures and the stationary distribution.
If the observed process is $\R$-valued, and $d$ is given by the Wasserstein distance, an explicit computation is possible. However, as we explained in the introduction, using the observations of $U^{\theta_0}$ only allows us to estimate one parameter. If we want to estimate more, we need to add increments of the process into the observations. Unfortunately, the computation of the Wasserstein distance in higher dimension requires approximation/optimization methods that are highly expensive in terms of complexity and are not discussed in this paper. In this context and as in \cite{panloup2020general}, it is simple to consider an approximation of the distance $d_{CF,p}$ defined in \eqref{eq:d_cf}, which we also worked with to obtain the rate of convergence. More specifically, we want a discretisation technique for the integral that appears in \eqref{eq:d_cf}. 
\paragraph{Minimization of the distance with respect to $\theta$.} 
To implement the estimators, we see the problem of computing the $\argmin$ in \eqref{eq:thetan-0} as an optimization problem. More specifically, in the Ornstein-Uhlenbeck case, we already have an expression of the stationary distribution \eqref{eq:fOUinvariant}. Furthermore, we also know how to express the covariance between the process and its increments \eqref{eq:fOUcov}. Since the stationary distribution is $\mu_\theta \sim N(0, \Sigma_\theta)$, we have all the information that is needed to simulate it. 
In this case, to compute $\widehat{\theta}_n$ defined in \eqref{eq:thetan-0} we want to minimise
\begin{align}\label{eq:functional}
F : \theta \mapsto d\Big(\frac{1}{n} \sum_{k=0}^{n-1} \delta_{X_{t_k}^{\theta_0}}, \mu_{\theta}\Big) .
\end{align}
%
We adapt the technique described in \citep[Equations (7.5)-(7.6)]{panloup2020general}.
Taking $d=d_{CF,p}$, the idea is to write the functional $F$ as
\begin{align}\label{eq:functional-xi}
F(\theta) = d_{CF,p}(\mu,\mu_{\theta}) = \mathbb{E} [ | \mu(f_\Phi) - \mu_\theta(f_\Phi) |^2 ] .
\end{align}
where $\mu= \frac{1}{n} \sum_{k=0}^{n-1} \delta_{X_{t_k}^{\theta_0}} \,$, $f_\phi(x) = e^{i \langle x, \phi \rangle}$ and $\Phi$ is random variable that has $g_p$ as density  (see \eqref{eq:gp}).
 Writing $F$ like this allows to perform a gradient descent algorithm. In fact, an approximation of the gradient $\nabla F$ is formally obtained as
\begin{align*}%
 \widehat{\nabla} F := \frac{1}{R}\sum_{r=1}^R \Lambda (\theta, \Phi^r) ,
\end{align*}
where $(\Phi^r)_{r=1,\dots,R}$ is a sequence of i.i.d random variables with law $g_p$ and
\begin{align*}
\Lambda (\theta, \phi) & = \partial_\theta \left( | \mu (f_\phi) - \mu_\theta(f_\phi) |^2 \right)  \\
& = 2 \left( \frac{1}{n} \sum_{k=0}^{n-1} \cos(\langle \phi, X_{t_k}^{\theta_0} \rangle)  - e^{-\frac{1}{2}\phi^T \Sigma_\theta \phi}  \right) \nabla \left( e^{-\frac{1}{2}\phi^T \Sigma_\theta \phi} \right) \\
& =- \left( \frac{1}{n} \sum_{k=0}^{n-1} \cos(\langle \phi, X_{t_k}^{\theta_0} \rangle)  - e^{-\frac{1}{2}\phi^T \Sigma_\theta \phi}  \right)  e^{-\frac{1}{2}\phi^T \Sigma_\theta \phi} \, \nabla \left( \phi^T \Sigma_\theta	 \phi \right) .
\end{align*}
Hence the gradient algorithm reads
\begin{align}\label{eq:GD}
\forall t \in \{0,\dots,T\}, ~ \theta_{t+1} = \theta_t - \eta_t \frac{1}{R}\sum_{r=1}^R \Lambda(\theta_t, \Phi^r_{t+1}) ,
\end{align}
where $(\eta_t)_t$ is a sequence of positive steps and at each gradient step $t$, $(\Phi_t^r)_{t,r}$ is a sequence of i.i.d random variables with law $g_p$.

Moreover, to compute $\widehat{\theta}_{n,N,\gamma}$ defined in \eqref{eq:thetan-02}, we want to minimise
\begin{align*}
F : \theta \mapsto d\Big(\frac{1}{n} \sum_{k=0}^{n-1} \delta_{X_{t_k}^{\theta_0}}, \frac{1}{N} \sum_{k=0}^{N-1} \delta_{X_{t_k}^{\theta,\gamma}}\Big) .
\end{align*}
In this case, write 
$F(\theta)= \mathbb{E} [ | \mu(f_\Phi) - \frac{1}{N} \sum_{k=0}^{N-1} e^{i\langle X^{\theta,\gamma}_{k\gamma}, \Phi \rangle} |^2 ]$.
The gradient $\Lambda$ can be written as in \cite[Eq (7.6)]{panloup2020general}:
\begin{equation*}
\begin{split}
\Lambda(\theta,\phi) & = 2 ( \mu_\theta- \mu) \, \cos(\langle  \phi, . \rangle) \, \rho_\theta (-\sin\langle \phi, . \rangle)  \\
& \quad + 2 ( \mu_\theta- \mu) \, \sin(\langle  \phi, . \rangle)\,  \rho_\theta (\cos\langle \phi, . \rangle)  ,
\end{split}
\end{equation*}
where for any function $g: \mathbb{R} \rightarrow \mathbb{R}$, each component of $\rho_\theta(g(\langle \phi,. \rangle))$ reads:
\begin{align*}
\rho_\theta(g(\langle \phi,. \rangle))^i = \frac{1}{N} g(\langle \phi, X^{\theta,\gamma}_{k \bar{\gamma}} \rangle) \langle \phi, \partial_{\theta^i} X^{\theta,\gamma}_{k \bar{\gamma}}  \rangle .
\end{align*}

Therefore, the question is how to simulate paths of the process $\partial_{\theta^i} X^{\theta,\gamma}_\cdot$. In \cite{panloup2020general} the authors handle the case when $\theta^i$ is the drift parameter $\xi$ and explain how the process can be simulated recursively as
\begin{align*}
\partial_\xi U_{(k+1)\gamma}^{\theta,\gamma} = \partial_\xi U_{k \gamma}^{\theta,\gamma} + \gamma \left( \partial_\xi b_\xi(U_{k \gamma}^{\theta,\gamma} ) + \nabla b_\xi (U_{k \gamma}^{\theta,\gamma} ) \partial_\xi U_{k \gamma}^{\theta,\gamma} \right) .
\end{align*}
The same technique can be used when $\theta^i$ is the diffusion parameter $\sigma$:
\begin{align*}
\partial_\sigma U_{(k+1)\gamma}^{\theta,\gamma} = \partial_\sigma U_{k \gamma}^{\theta,\gamma} + \gamma \nabla b_\xi (U_{k \gamma}^{\theta,\gamma} ) \partial_\sigma U_{k \gamma}^{\theta,\gamma} +  \left( B_{(k+1)\gamma} - B_{k \gamma} \right) .
\end{align*}
Finally, in order to compute $(\partial_H U_{k \gamma})_{k=0,\dots,N}$ in the same way, one needs to compute $\partial_H B$, which is not an obvious task. For instance, using the 
Mandelbrot-Van Ness representation \eqref{eq:MVN}, one cannot simply differentiate the integrand with respect to $H$ to get $\partial_H B$. In \cite{Neuenkirch}, it is shown that for all $t \ge 0$, $B_t$ is almost surely infinitely differentiable with respect to $H$. But since we consider ergodic increments, we need a result that states: almost surely, for all $t \ge 0$, $B_t$ is infinitely differentiable with respect to $H$. In \cite{HRarxiv}, it is shown that the solution to \eqref{eq:fsde} is $(1-\varepsilon)$-H\"older continuous in $H$. So in this case, one simply approximates the gradient of $\theta \mapsto F(\theta)$ by a finite difference with step $\delta=0.1$. Denoting this approximation by $\widehat{\Lambda}(\theta,\Phi)$, the gradient algorithm reads as in \eqref{eq:GD} with $\widehat{\Lambda}(\theta_t, \Phi_t^r)$ in place of $\Lambda(\theta_t, \Phi_t^r)$.
%
\paragraph{Simulation of the variable $\Phi$.} 
Since $g_p$ has a spherical form, $\Phi$ can be simulated using the spherical coordinates and the inverse transform sampling method in any dimension (see e.g. \cite[Section 7]{panloup2020general} for $d=2$).
\paragraph{Numerical illustrations.}
Recall that we consider the process $U$ given by \eqref{eq:fO-U} and we assume $\theta$ to be in a compact interval. The assumptions \ref{asmp:compact} and \ref{asmp:drift} are clearly satisfied, where \ref{asmp:identif} follows from Proposition \ref{prop:identifOU}. Moreover, Lemma \ref{lem:strongidentif-O-U} proves that \ref{asmp:strong_identif} is satisfied when we are only interested in estimating one parameter. Using the strategy described before, we get a discretely observed path of $X$ and an Euler approximation $X^{\theta,\gamma}$. We set the following parameters:
\begin{align*}
\theta_0 &= (\xi_0, \sigma_0 , H_0)  = (2, 0.5, 0.7) \\
\underline{h} & = 10^{-3},~q = 3,~ h = 10^{-1} \\
n & = 1000,~ N = 10 000,~ \gamma = 10^{-1} \\
R & = 100,~ T = 100,~ p = 2 . 
\end{align*}
Let us start with the one-dimensional case (Figure \ref{fig:1D}). We perform the gradient descents described above over $100$ realisations of the observations $(X^{\theta_0}_{kh})_{k=1,\dots,n}$ and plot a histogram highlighting the empirical mean obtained and the empirical variance. More precisely, we denote the empirical variance when estimating $\xi_0, \sigma_0$ and $H_0$ respectively by $\Var_{\xi}$, $\Var_\sigma$ and $\Var_H$. 
%
We use the same algorithm in the two-dimensional case (Figures \ref{fig:2D-1}, \ref{fig:2D-2} and \ref{fig:2D-3}) and the three-dimensional case (Figure \ref{fig:3D}) and plot histograms for all the parameters we are interested in estimating. Take $\theta^0 = (\xi^0 = 1, \sigma^0 = 1, H^0 =0.5 )$ as the initial point in the gradient descents. 
\begin{figure}[h!]
\centering
\includegraphics[width=0.4\textwidth]{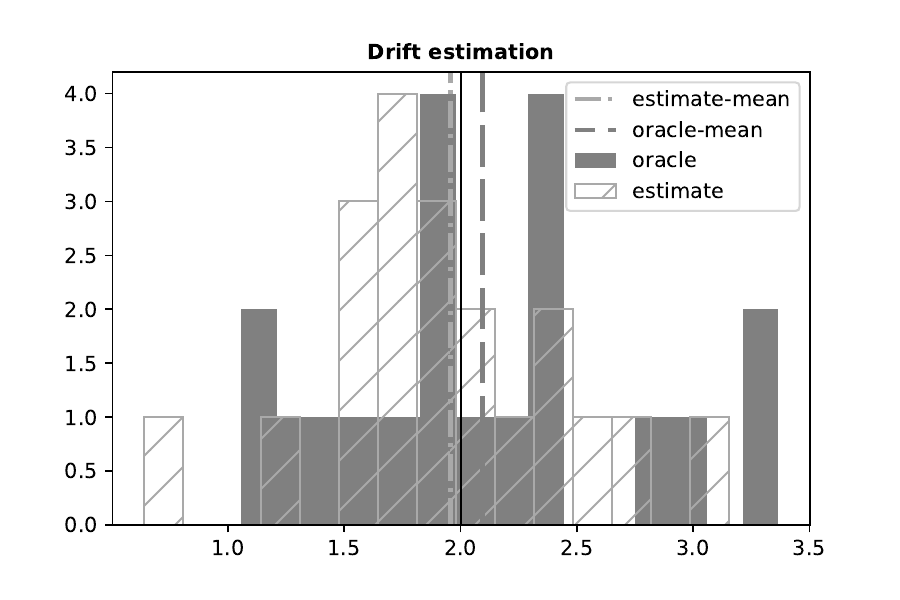}\,\includegraphics[width=0.4\textwidth]{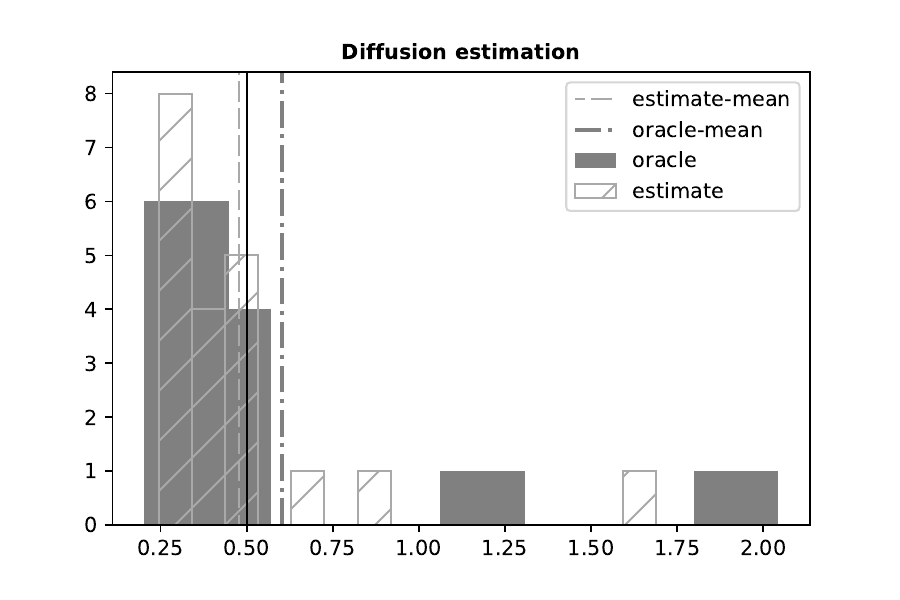}\\\includegraphics[width=0.4\textwidth]{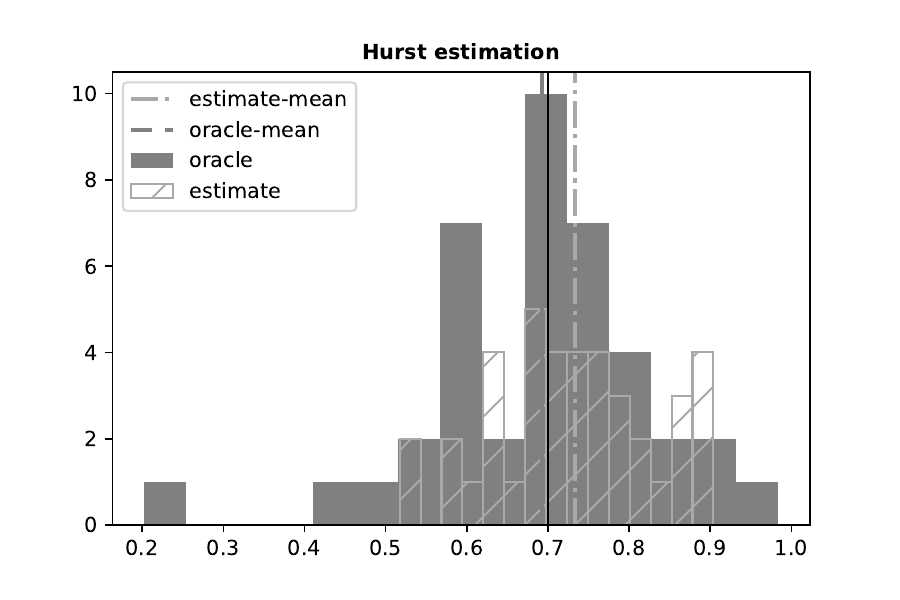}\\
\caption{Histograms for the estimation of each parameter separately. The filled vertical lines are for the true parameters. The dash lines represent the empirical mean of $\widehat{\theta}_n$ and the dash-dotted lines the empirical mean of $\widehat{\theta}_{n,N,\gamma}$. Left: $\Var_\xi (\widehat{\theta}_n) \sim 0.1$, $\Var_\xi(\widehat{\theta}_{n,N,\gamma}) \sim 0.1$. Right: $\Var_\sigma(\widehat{\theta}_n) \sim 0.01$, $\Var_\sigma (\widehat{\theta}_{n,N,\gamma}) \sim 0.01$. Bottom: $\Var_H(\widehat{\theta}_n) \sim 0.01$, $\Var_H(\widehat{\theta}_{n,N,\gamma}) \sim 0.01$.}
\label{fig:1D}
\end{figure}
\begin{figure}[h!]
\centering
\includegraphics[width=0.4\textwidth]{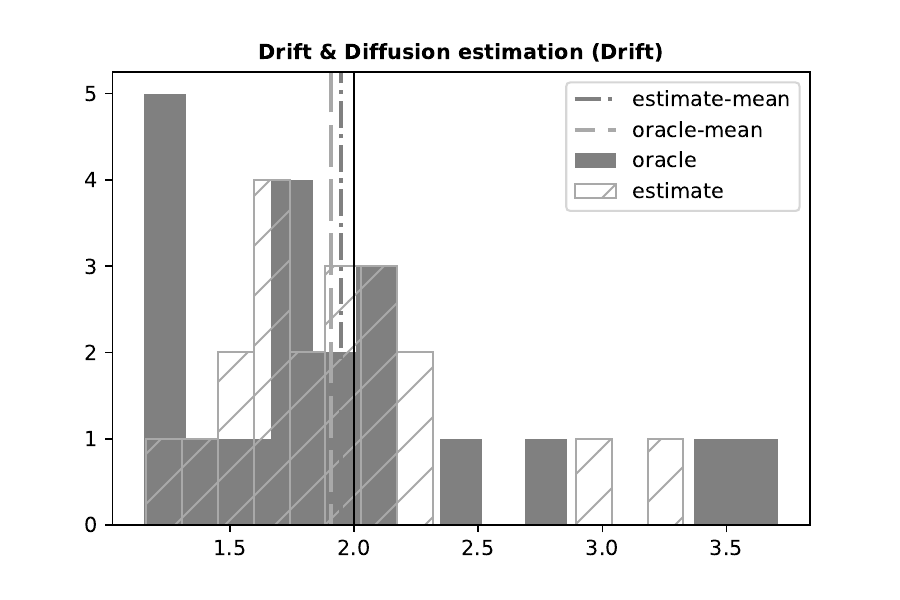}\,\includegraphics[width=0.4\textwidth]{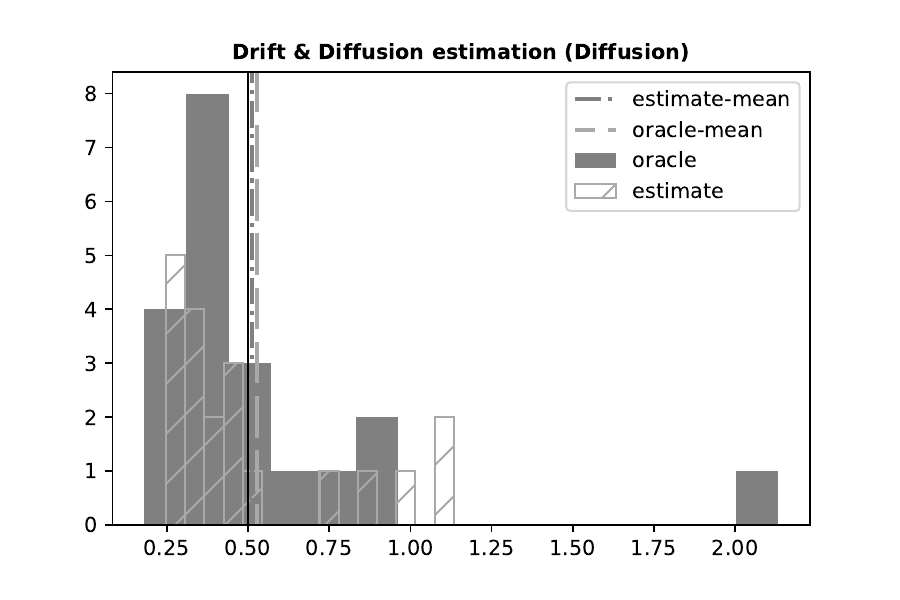}
\caption{Histograms for the estimation of the drift and the diffusion. The filled vertical lines are for the true parameters. The dash lines represent the empirical mean of $\widehat{\theta}_n$ and the dash-dotted lines the empirical mean of $\widehat{\theta}_{n,N,\gamma}$. Left: $\Var_\xi (\widehat{\theta}_n) \sim 0.1$, $\Var_\xi (\widehat{\theta}_{n,N,\gamma}) \sim 0.1$. Right: $\Var_\sigma  (\widehat{\theta}_n)\sim 0.01$, $\Var_\sigma(\widehat{\theta}_{n,N,\gamma}) \sim 0.01$.}
\label{fig:2D-1}
\end{figure}
\begin{figure}[h!]
\centering
\includegraphics[width=0.4\textwidth]{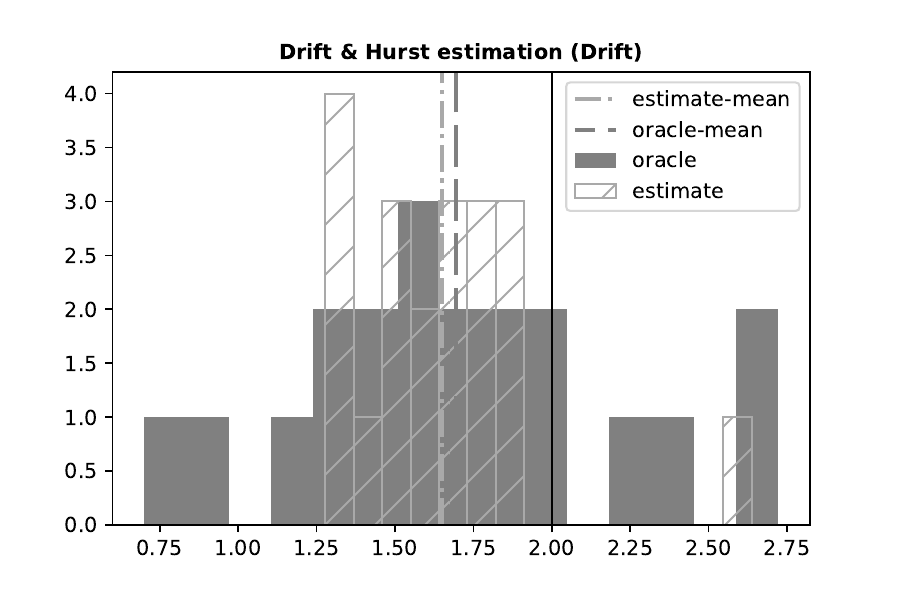}\,\includegraphics[width=0.4\textwidth]{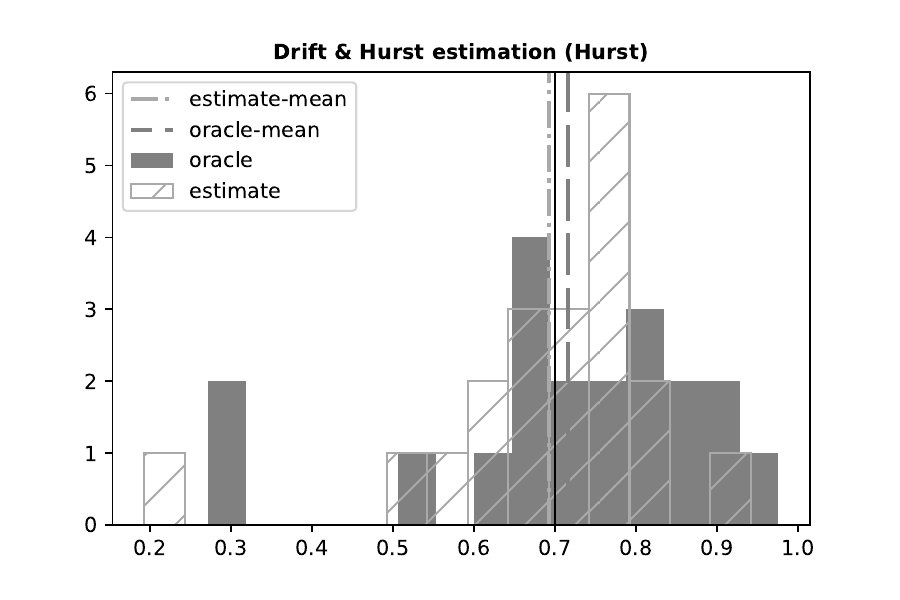}
\caption{Histograms for the estimation of the drift and the Hurst parameter. The filled vertical lines are for the true parameters. The dash lines represent the empirical mean of $\widehat{\theta}_n$ and the dash-dotted lines the empirical mean of $\widehat{\theta}_{n,N,\gamma}$. Left: $\Var_\xi (\widehat{\theta}_n)\sim 0.1$, $\Var_\xi (\widehat{\theta}_{n,N,\gamma}) \sim 0.1$. Right: $\Var_H(\widehat{\theta}_n) \sim 0.01$, $\Var_H(\widehat{\theta}_{n,N,\gamma}) \sim 0.01$.}
\label{fig:2D-2}
\end{figure}
\begin{figure}[h!]
\centering
\includegraphics[width=0.4\textwidth]{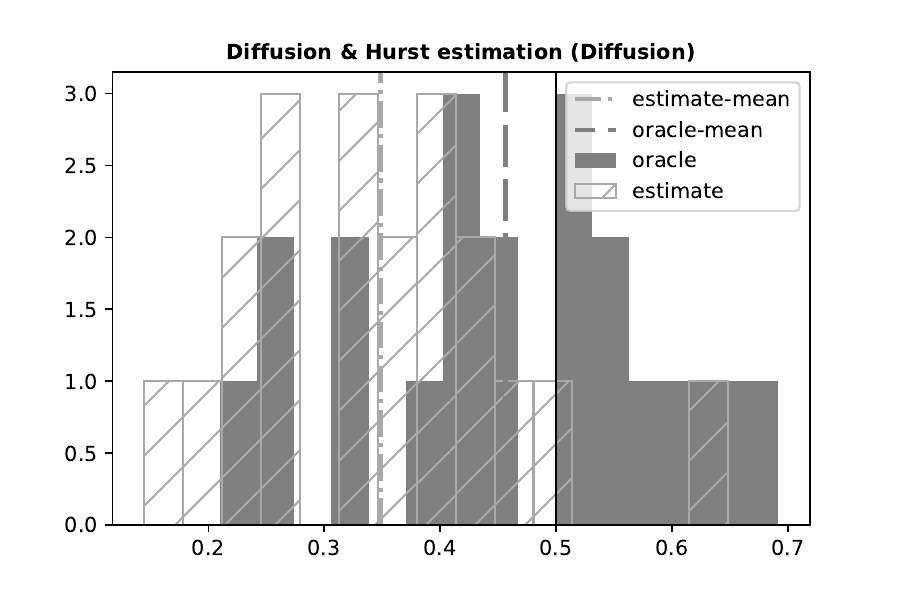}\,\includegraphics[width=0.4\textwidth]{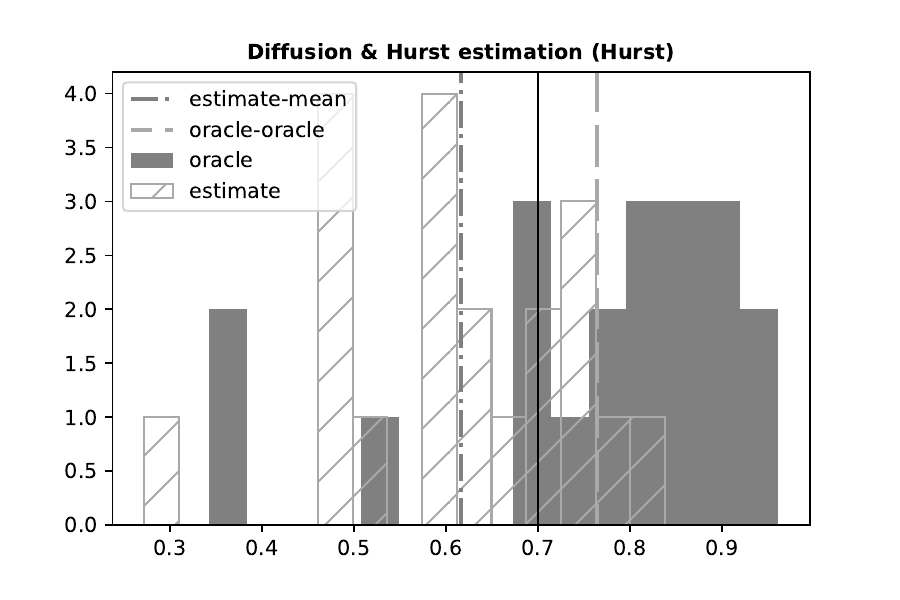}
\caption{Histograms for the estimation of the diffusion and the Hurst parameter. The filled vertical lines are for the true parameters. The dash lines represent the empirical mean of $\widehat{\theta}_n$ and the dash-dotted lines the empirical mean of $\widehat{\theta}_{n,N,\gamma}$. Left: $\Var_\sigma (\widehat{\theta}_n)\sim 0.01$, $\Var_\sigma (\widehat{\theta}_{n,N,\gamma})\sim 0.01$. Right: $\Var_H(\widehat{\theta}_n) \sim 0.01$, $\Var_H(\widehat{\theta}_{n,N,\gamma}) \sim 0.01$.}
\label{fig:2D-3}
\end{figure}
\begin{figure}[h!]
\centering
\includegraphics[width=0.4\textwidth]{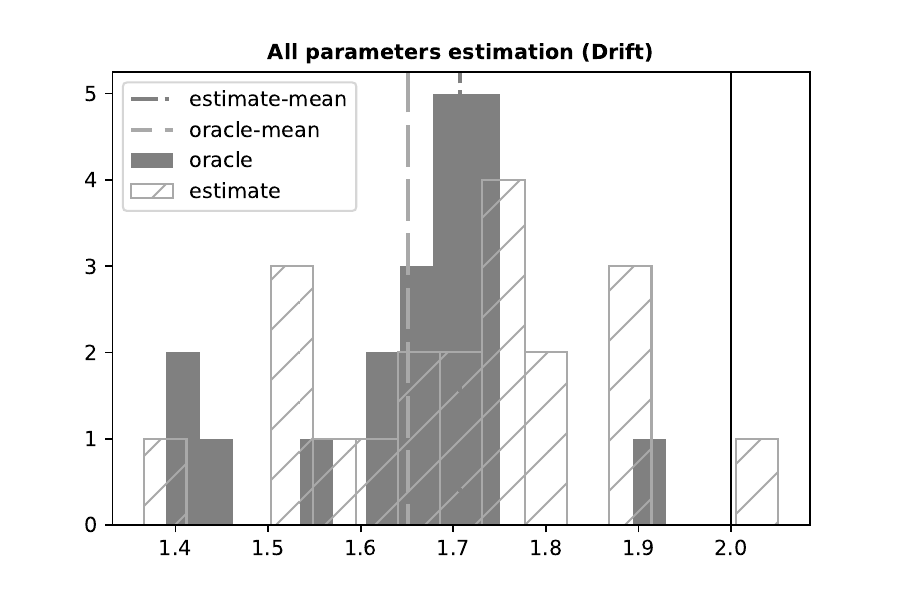}
\includegraphics[width=0.4\textwidth]{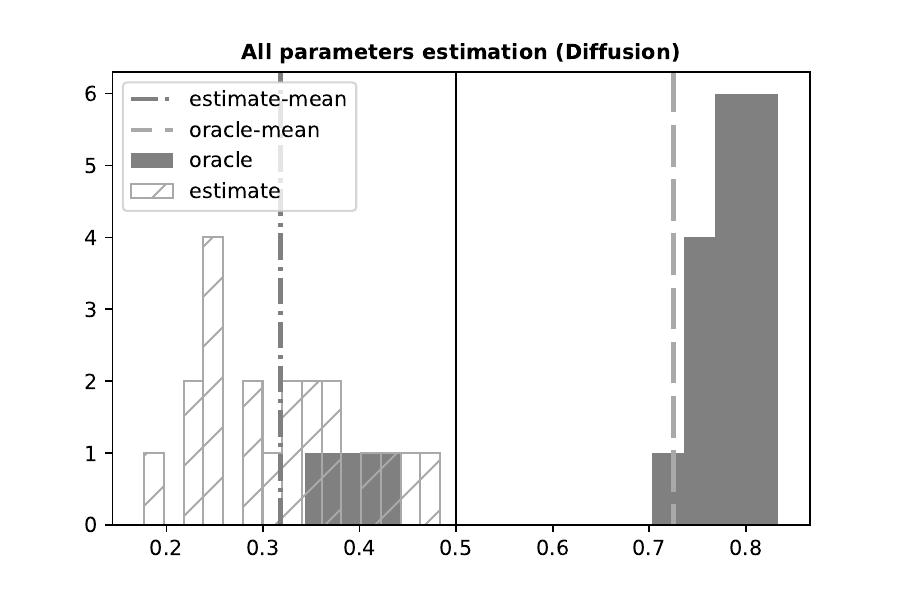}
\includegraphics[width=0.4\textwidth]{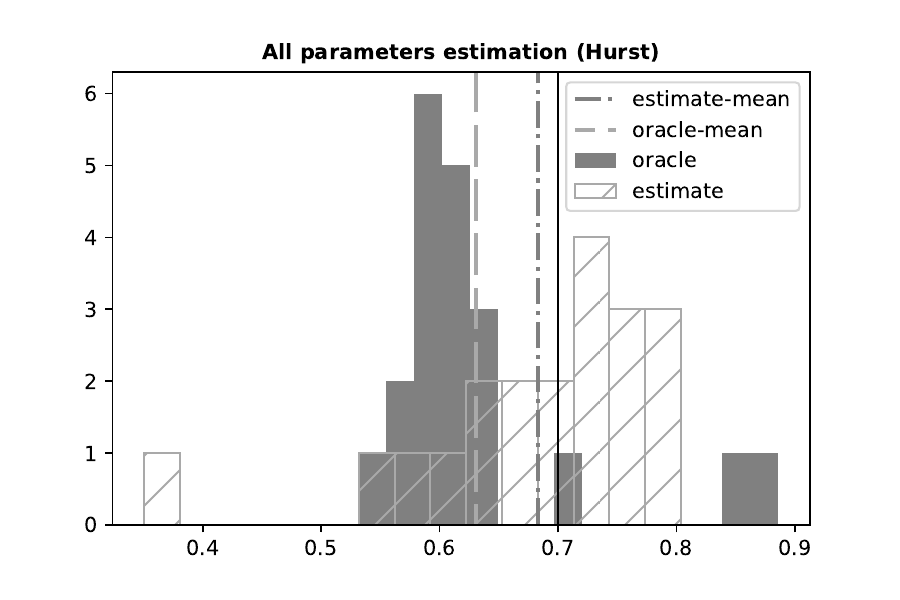}
\caption{Histograms for the estimation of all the parameters. The filled vertical lines are for the true parameters. The dash lines represent the empirical mean of $\widehat{\theta}_n$ and the dash-dotted lines the empirical mean of $\widehat{\theta}_{n,N,\gamma}$. Left: $\Var_\xi(\widehat{\theta}_n) \sim 0.1$, $\Var_\xi (\widehat{\theta}_{n,N,\gamma})\sim 0.1$. Right: $\Var_\sigma(\widehat{\theta}_n)\sim 0.01$, $\Var_\sigma(\widehat{\theta}_{n,N,\gamma})\sim 0.01$. Bottom: $\Var_H(\widehat{\theta}_n)\sim 0.01$, $\Var_H(\widehat{\theta}_{n,N,\gamma})\sim 0.01$.}
\label{fig:3D}
\end{figure}
\paragraph{Discussion.}In the $1d$ case, we get accurate estimators of the parameters  (see Figure \ref{fig:1D}). 
In the $2d$ case, one observes a decrease in the accuracy of the estimates (see the left histogram on Figure \ref{fig:2D-2} and Figure \ref{fig:2D-3} for instance), which is due to a higher bias, as the variances of the estimators stay the same as in the $1d$ case. In the $3d$ case, one observes an increase in both the bias and the variances of the estimators (specially in the estimation of the drift). 
Furthermore, observe that for $N=10000$ and $\gamma=0.1$, the estimator $\widehat{\theta}_{n,N,\gamma}$ behaves qualitatively like $\widehat{\theta}_n$. While $\widehat{\theta}_{n,N,\gamma}$ is slow to compute, this ensures that the error generated by replacing $\theta \mapsto \mu_\theta$ by its approximation is not significant. 
Finally, note that in practice in the $3d$ case, the oracle $\widehat{\theta}_n$ is slow to compute. In fact, computing $\theta \mapsto \mu_\theta$ requires computing the integrals appearing in the right-hand side of \eqref{eq:f-3D} which are slow to converge. Moreover, the error when approximating these integrals can be significant as shown in Figure \ref{fig:2D-3}. Thus, even when the invariant measure is known, it can be more efficient to compute $\widehat{\theta}_{n,N,\gamma}$ than the oracle in higher dimensions.
%
In general, further exploration and in-depth analysis would be necessary to enhance the integration of our statistical procedure with gradient descent algorithms. In particular, going beyond the fractional Ornstein-Uhlenbeck model requires looking into derivative-free methods to approximate $\nabla \Lambda(\theta,\phi)$ (e.g \cite{flaxman2005online}) and faster methods to compute the approximation of $\theta \mapsto \mu_\theta$. This aspect remains open for future investigation.

\begin{appendix}

\section{Regularity in the Hurst parameter}\label{app:regH}
In this section, we recall and adapt some results from our companion paper \cite[Sections 4 and 5]{HRarxiv} that state the regularity in the Hurst parameter of continuous and discrete ergodic means. Recall that the fractional OU process is defined by \eqref{eq:fO-U}, and let us denote by $\bar{U}^{(1,\sigma,H)}$ the stationary fractional OU process with drift $\xi=1$, diffusion matrix $\sigma$ and Hurst parameter $H$.

In the whole Appendix, let $\mathcal{H}$ be a compact subset of $(0,1)$, $\Xi$ be a compact subset of $\R^m$, $\Sigma$ a compact subset of the space of $d\times d$ invertible matrices and denote $\Theta = \Xi\times \Sigma\times \mathcal{H}$.

\begin{lemma}\label{lem:thm4.5}
Let $\varpi \in\left(0, 1 \right)$ and $p \geq 1$. Let $W$ be an $\R^d$-Brownian motion and for any $H\in (0,1)$, denote by $B^H$ the fBm with underlying noise $W$ (i.e. as in \eqref{eq:MVN}). 
There exists a random variable $\mathbf{C}$ with a finite moment of order $p$ such that almost surely, for any $t \geq 0$, any $\theta_1 = (\xi, \sigma, H_1) \in \Theta$ and $\theta_2=(\xi, \sigma, H_2) \in \Theta$,
\begin{equation*}
\frac{1}{t+1} \int_0^{t+1}\left|Y_s^{\theta_1}-Y_s^{\theta_2}\right|^2 d s \leq \mathbf{C} |H_1-H_2|^\varpi ,
\end{equation*}
where $Y^{\theta_{1}}$ (resp. $Y^{\theta_{2}}$) is the solution to \eqref{eq:fsde} with parameter $\theta_{1}$ (resp. $\theta_{2}$), a drift $b_{\xi}$ satisfying \ref{asmp:drift} and driving fBm $B^{H_{1}}$ (resp. $B^{H_{2}}$),  and $Y^{\theta_{1}}$ and $Y^{\theta_{2}}$ start from the same initial condition.
\end{lemma}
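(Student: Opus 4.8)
The plan is to exploit the coercivity in \ref{asmp:drift} to turn the comparison of the two SDEs into a comparison of the driving noises, and then to control the resulting functional of $B^{H_1}-B^{H_2}$ uniformly in time by using the \emph{stationarity of the increments} of the fractional Brownian motion. Writing $D_t=Y_t^{\theta_1}-Y_t^{\theta_2}$ and $G_t=\sigma(B_t^{H_1}-B_t^{H_2})$, the process $V_t:=D_t-G_t$ is $\mathcal{C}^1$ with $\dot V_t=b_\xi(Y_t^{\theta_1})-b_\xi(Y_t^{\theta_2})$ and $V_0=0$. Using $\langle b_\xi(x)-b_\xi(y),x-y\rangle\le-\beta|x-y|^2$ and $|b_\xi(x)-b_\xi(y)|\le K|x-y|$ from \eqref{eq:drift-coerciv}, together with a Young inequality, one gets $\frac{d}{dt}|V_t|^2\le-\beta|D_t|^2+\frac{K^2}{\beta}|G_t|^2$. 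Integrating naively would bound $\frac{1}{t+1}\int_0^{t+1}|D_s|^2\,ds$ by $\frac{K^2}{\beta^2(t+1)}\int_0^{t+1}|G_s|^2\,ds$; however, since $\mathbb{E}|B_s^{H_1}-B_s^{H_2}|^2$ grows like $s^{2\max(\mathcal{H})}$ by \cite[Proposition 2.1]{HRarxiv}, this crude bound fails to be uniform in $t$ as soon as $\max(\mathcal{H})>\frac12$. The gain must therefore come from replacing the absolute noise $G_s$ by its increments, which are stationary.

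To make this precise I would work on a coarse grid of mesh $L$, fixed once and for all so large that $2e^{-\beta L/2}<1$ (possible since $\beta>0$). On each window $[kL,(k+1)L]$ I repeat the computation above relative to its left endpoint, i.e. with $V_s^{(k)}:=D_s-(G_s-G_{kL})$, so that only the local increment $G_s-G_{kL}$ enters the estimate. Grönwall on the window then yields a recursion $|D_{(k+1)L}|^2\le 2e^{-\beta L/2}|D_{kL}|^2+C_L\,M_k$, where $M_k:=\sup_{s\in[kL,(k+1)L]}|(B_s^{H_1}-B_{kL}^{H_1})-(B_s^{H_2}-B_{kL}^{H_2})|^2$; since both fBm are built from the same $W$ through \eqref{eq:MVN}, the joint increment process is stationary, so $(M_k)_k$ is stationary in $k$ with $M_k\stackrel{\mathcal{L}}{=}\sup_{r\in[0,L]}|B_r^{H_1}-B_r^{H_2}|^2$. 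Because $2e^{-\beta L/2}<1$ and $D_0=0$, solving the recursion and bounding $\sup_{s\in[kL,(k+1)L]}|D_s|^2$ by $|D_{kL}|^2+C_LM_k$ gives a pathwise domination $\frac{1}{t+1}\int_0^{t+1}|D_s|^2\,ds\le\frac{C_L}{1-2e^{-\beta L/2}}\cdot\frac{1}{\lfloor t/L\rfloor+1}\sum_k M_k$, so that the entire time average is controlled by an average of the stationary increment functionals $M_k$.

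It then remains to bound this average uniformly in $t$ by $\mathbf{C}\,|H_1-H_2|^\varpi$ with $\mathbf{C}$ having finite moments of every order. By \cite[Proposition 2.1]{HRarxiv} applied to the stationary increments, together with a maximal inequality over the fixed window $[0,L]$, one has $\mathbb{E}[M_k^p]\le C_{L,p}|H_1-H_2|^{2p}\le C_{L,p}|H_1-H_2|^{p\varpi}$ uniformly in $k$ (using $\varpi<1$ and that $\mathcal{H}$ is compact to absorb the logarithmic factors and the passage to the supremum). The final, and hardest, point is to upgrade these pointwise-in-$k$ moment bounds into a single random variable $\mathbf{C}:=\sup_{t\ge0}(t+1)^{-1}\int_0^{t+1}|D_s|^2\,ds\,|H_1-H_2|^{-\varpi}$ with $\mathbb{E}[\mathbf{C}^p]<\infty$ valid simultaneously for all $t\ge0$; this is exactly the ergodic-mean regularity machinery of \cite[Sections 4--5]{HRarxiv}, where a Garsia--Rodemich--Rumsey type argument produces a uniform almost sure constant. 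I expect this to be the main obstacle, since everything upstream is an elementary coercivity computation: the genuine difficulty is circumventing the growth of $|B_t^{H_1}-B_t^{H_2}|$ by means of increment-stationarity and manufacturing one random constant that controls all times at once.
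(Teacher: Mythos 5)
Your reduction is sound and genuinely different from the paper's: instead of comparing $Y^{\theta_1}-Y^{\theta_2}$ with the difference of stationary fractional OU processes driven by the same noises (which is what the paper does, after conjugating by $\sigma^{-1}$ so that \ref{asmp:drift} is preserved and then quoting its companion results), you run the coercivity estimate on windows of fixed length $L$ and arrive at a pathwise domination of the ergodic mean by averages of the stationary window-suprema $M_k$. That part is correct: the Gr\"onwall recursion, the joint stationarity of the increments of $(B^{H_1},B^{H_2})$ under \eqref{eq:MVN}, and the bounds $\mathbb{E}[M_k^p]\le C_{L,p}|H_1-H_2|^{2p}$ all hold, and the constants are uniform over $(\xi,\sigma)\in\Xi\times\Sigma$ by compactness. (Incidentally, the naive bound you discard fails for every $H>0$, not only for $\max(\mathcal{H})>\tfrac12$, since $t^{-1}\int_0^t s^{2H}\,ds$ diverges; this does not affect your argument.)

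The gap is in the last step, and it is a gap of quantifiers rather than of estimates. The random variable you define, $\mathbf{C}=\sup_{t\ge0}(t+1)^{-1}\int_0^{t+1}|D_s|^2\,ds\;|H_1-H_2|^{-\varpi}$, still depends on the pair $(H_1,H_2)$. Controlling its $p$-th moment -- which for a \emph{fixed} pair follows from the ergodic maximal inequality applied to the stationary sequence $(M_k)_k$, so uniformity in $t$ is not actually the hard part -- yields only: for every pair $(H_1,H_2)$, almost surely, for all $t$. Lemma~\ref{lem:thm4.5} requires the opposite order: almost surely, for \emph{all} pairs $(H_1,H_2)\in\mathcal{H}^2$ and all $t$, with a single $\mathbf{C}$; since $\mathcal{H}^2$ is uncountable, the first statement does not imply the second. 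Closing this requires a chaining (Kolmogorov/Garsia--Rodemich--Rumsey) argument in the Hurst variable applied to the ergodic averages themselves; that is precisely the content of \cite[Propositions 4.2 and 4.4]{HRarxiv} which the paper's proof invokes after reducing to fOU differences, and it is also where the exponent $\varpi<1$ genuinely comes from (exponent loss in passing from $L^p$ estimates to an almost sure modulus), not from absorbing logarithmic factors as you suggest. Note finally that those companion results are stated for fOU functionals, not for your window-suprema: you could not quote them off the shelf, but would have to redo the GRR machinery for the field $(H_1,H_2)\mapsto\sup_n \frac{1}{n}\sum_{k<n}M_k$. The paper compares with the fOU process exactly so that this work does not have to be repeated.
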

\begin{proof}
For $i=1,2$, the process $\sigma^{-1} Y_\cdot^{\theta_{i}}$ is solution to the SDE
\begin{align*}
\sigma^{-1} Y_t^{\theta_{i}} = \sigma^{-1} Y_0 + \int_0^t \tilde{b}_\xi (\sigma^{-1} Y_s^{\theta_{i}}) ds + B^{H_{i}}_t ,
\end{align*}
with $\tilde{b}_\xi (x) = \sigma^{-1} b_\xi (\sigma \cdot)$. We have $\tilde{b}_{\xi} \in \mathcal{C}^{1,1}(\mathbb{R}^d \times \Xi, \mathbb{R}^d)$ and since $\sigma$ lives in the compact set $\Sigma$, $\tilde{b}_{\xi}$ still satisfies \eqref{eq:drift-coerciv} and \eqref{eq:drift-growth}. We choose the stationary fractional OU $\bar{U}^{(1,\Id,H_1)}$ with the same noise $B^{H_{1}}$ as $Y^{\theta_{1}}$ (similarly for $\bar{U}_s^{(1,\Id,H_2)}$). As in the proof of \cite[Theorem 4.5]{HRarxiv}, a comparison between $Y_s^{\theta_1}-Y_s^{\theta_2}$ and $\bar{U}^{(1,\Id,H_1)}-\bar{U}^{(1,\Id,H_2)}$ gives
\begin{align*}
\frac{1}{t+1} \int_0^{t+1}\left| \sigma^{-1}(Y_s^{\theta_1}-Y_s^{\theta_2}) \right|^2 d s & \leq C | \bar{U}_0^{(1,\Id,H_1)} - \bar{U}_0^{(1,\Id,H_2)} |^2 \\ 
& \quad + \frac{1}{t+1} \int_0^{t+1} |\bar{U}_s^{(1,\Id,H_1)} - \bar{U}_s^{(1,\Id,H_2)} |^2 ds .
\end{align*}
We can now apply \cite[Proposition 4.2]{HRarxiv} with $t'=t=0$ and \cite[Proposition 4.4]{HRarxiv} with $H'=K=K'$ and $t'=t$ to get that there exists a random variable $\mathbf{C}_1$ (independent of $\xi$ and $\sigma$) with a finite moment of order $p$ such that
\begin{align*}
\frac{1}{t+1} \int_0^{t+1}\left| \sigma^{-1}(Y_s^{\theta_1}-Y_s^{\theta_2}) \right|^2 d s \leq \mathbf{C}_1 | H_1-H_2 |^{\varpi} .
\end{align*}
Since $\left| \sigma^{-1}(Y_s^{\theta_1}-Y_s^{\theta_2}) \right| \ge | \sigma^{-1} | \left| (Y_s^{\theta_1}-Y_s^{\theta_2}) \right| $, dividing by $| \sigma^{-1}|$ and taking the supremum over $\Sigma$, we get the desired result by setting $\mathbf{C} = | \sigma^{-1} |^{-1} \mathbf{C}_1$.
\end{proof}

\begin{lemma}\label{lem:prop5.1}
Let $\mathcal{H}$ be a compact subset of $\left(0,1 \right)$, $\varpi \in\left(0, 1 \right)$, and $p \geq 1$. There exists $\gamma_0 >0$ such that for $\gamma \in\left(0, \gamma_0\right)$, there exists a random variable $\mathbf{C}_\gamma$ with a finite moment of order $p$ such that almost surely, for all $t, t^{\prime} \geq 0$ and all $H_1,H_2 \in \mathcal{H}$,
\begin{equation*}
 \frac{1}{t+1} \int_0^{t+1} | \bar{U}_{s_\gamma}^{(1,\Id,H_1)} - \bar{U}_{s_\gamma}^{(1,\Id,H_2)} |^2   ds \leq \mathbf{C}_\gamma |H_1-H_2|^\varpi ,
\end{equation*}
where $s_\gamma$ denotes the leftmost point in a time-discretisation of step $\gamma$.
\end{lemma}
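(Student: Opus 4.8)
The plan is to treat this as the discrete-time counterpart of Lemma~\ref{lem:thm4.5}: the left-hand side is an ergodic mean of the squared difference of two stationary fractional Ornstein--Uhlenbeck processes, both built from the same Brownian motion $W$ via \eqref{eq:MVN} and sampled on the grid of mesh $\gamma$. Since the integrand is constant on each interval $[k\gamma,(k+1)\gamma)$, the integral is a weighted discrete average of the form $\frac{1}{t+1}\sum_k \gamma_k\,|\bar{U}_{k\gamma}^{(1,\Id,H_1)}-\bar{U}_{k\gamma}^{(1,\Id,H_2)}|^2$, so the statement is exactly the discrete ergodic-mean regularity result of the companion paper. I would therefore invoke \cite[Section~5]{HRarxiv} after checking that the hypotheses match: drift $\xi=1$, diffusion matrix $\Id$, Hurst parameters in the compact set $\mathcal{H}$, and the convention that the two fOU processes share the noise $W$. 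The parameter $t'$ appearing in the statement is a vestige of the shifted ergodic means in \cite{HRarxiv} and plays no role once one specialises to the average over $[0,t+1]$.

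For a self-contained argument I would proceed in three steps. First, a pointwise-in-time estimate: because $\bar{U}^{(1,\Id,H)}$ is a stationary Gaussian process that is smooth in $H$ (as for the fBm itself, see \cite[Proposition 2.1]{HRarxiv} and \cite{Neuenkirch}), one obtains $\|\bar{U}_{k\gamma}^{(1,\Id,H_1)}-\bar{U}_{k\gamma}^{(1,\Id,H_2)}\|_{L^p}\le C\,|H_1-H_2|$ uniformly in $k$ and in $\gamma\le\gamma_0$, the decay of correlations of the fOU ensuring that the constant does not blow up in time. Second, an ergodic-averaging step: using stationarity of the sampled sequence and the exponential decay of its covariance, the time-average $\frac{1}{t+1}\int_0^{t+1}|\bar{U}_{s_\gamma}^{(1,\Id,H_1)}-\bar{U}_{s_\gamma}^{(1,\Id,H_2)}|^2\,ds$ has its $L^{p}$ norm controlled uniformly in $t$ by the pointwise bound, so that its expectation is bounded by $C\,|H_1-H_2|^2$. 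Third, a chaining argument (Kolmogorov, or Garsia--Rodemich--Rumsey) over the parameter $H\in\mathcal{H}$ upgrades this moment bound, which holds for each fixed pair $(H_1,H_2)$, into an almost-sure bound that is uniform over all $H_1,H_2\in\mathcal{H}$ and over all $t\ge 0$ simultaneously, producing a single random variable $\mathbf{C}_\gamma$ with a finite moment of order $p$. It is precisely this chaining that forces the H\"older exponent to drop from $1$ to an arbitrary $\varpi<1$.

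The main obstacle is the third step: obtaining a single random constant $\mathbf{C}_\gamma$, with finite $p$-th moment, that dominates the ergodic mean \emph{simultaneously} for every $t\ge0$ and every pair $(H_1,H_2)$. The pointwise and per-time estimates are routine Gaussian computations, but their supremum over the continuum of parameters and times is not, and must be handled through a modulus-of-continuity bound on the map $H\mapsto (\bar{U}_{s_\gamma}^{(1,\Id,H)})_{s\ge 0}$, exploiting the finite moments of its increments in $H$. One must also ensure that $\gamma_0$ can be chosen uniformly so that the stationary discretised scheme is well defined and its correlations decay at a rate independent of $H\in\mathcal{H}$, which is where the compactness of $\mathcal{H}$ enters.
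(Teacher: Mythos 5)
Your proposal takes essentially the same route as the paper: the paper's proof is exactly the citation argument you describe first, applying \cite[Proposition 5.1]{HRarxiv} (with $t'=t$ and $H'=K'=K$), which leaves a residual initial-condition term $C\,\mathbb{E}\,|\bar{U}_0^{(1,\Id,H_1)}-\bar{U}_0^{(1,\Id,H_2)}|^2$, and then absorbs that term using \cite[Proposition 4.2]{HRarxiv} at $t=t'=0$. Your self-contained three-step sketch (pointwise Gaussian estimate, ergodic averaging, chaining over $\mathcal{H}$ and $t$ with the H\"older exponent dropping to $\varpi<1$) is a fair outline of what those cited results contain, and your identification of the simultaneous uniformity in $t$ and $(H_1,H_2)$ as the genuine difficulty is accurate, but the paper delegates all of that work to the companion paper rather than carrying it out.
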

\begin{proof}
Apply \cite[Proposition 5.1]{HRarxiv}, with $t'=t$ and $H'=K'=K$ to get that
\begin{align*}
\frac{1}{t+1} \int_0^{t+1} |  \bar{U}_{s_\gamma}^{(1,\Id, H_1)} - \bar{U}_{s_\gamma}^{(1,\Id,H_2)}   |^2 ds \leq \mathbf{C}_\gamma |H_1-H_2|^\varpi  + C \mathbb{E} | \bar{U}_0^{(1,\Id,H_1)} - \bar{U}_0^{(1,\Id,H_2)} |^2 .
\end{align*}
Now apply \cite[Proposition 4.2]{HRarxiv} with $t=t'=0$ to get the desired result.
\end{proof}

\begin{lemma}\label{lem:thm5.2}
Let $\mathcal{H}$ be a compact subset of $\left(0, 1 \right)$. Let $\varpi \in\left(0, 1\right)$ and $p \geq 1$. There exists $\gamma_0>0$ such that for $\gamma \in\left(0, \gamma_0\right]$, there exists a random variable $\mathbf{C}_\gamma$  with a finite moment of order $p$ such that almost surely, for any $N \in \mathbb{N}^*$, any $\theta=(\xi, \sigma, H_1)$ and any $\theta_2=(\xi,\sigma,H_2) \in \Theta$,
$$
\frac{1}{N} \sum_{k=1}^N\left|Y_{k \gamma}^{\theta_1,\gamma}-Y_{k \gamma}^{\theta_2,\gamma}\right|^2 \leq \mathbf{C}_\gamma |H_1-H_2|^\varpi ,
$$
where $Y^{\theta_2,\gamma}$ and $Y^{\theta_2,\gamma}$ are Euler schemes \eqref{eq:euler-scheme} with the same initial condition and driven by fBm with the same underlying noise (see \eqref{eq:MVN}).
\end{lemma}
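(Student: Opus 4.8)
The plan is to follow the same three-step pattern used for the continuous ergodic mean in Lemma~\ref{lem:thm4.5} and for the grid-sampled stationary fOU in Lemma~\ref{lem:prop5.1}, but now at the level of the Euler scheme. First I would rescale by $\sigma^{-1}$: the sequence $\sigma^{-1} Y^{\theta_i,\gamma}$ is itself the Euler scheme \eqref{eq:euler-scheme} (with the same underlying noise) of the SDE with identity diffusion and drift $\tilde{b}_\xi(x) = \sigma^{-1} b_\xi(\sigma x)$, which still satisfies \ref{asmp:drift} uniformly over the compact set $\Sigma$, exactly as in the proof of Lemma~\ref{lem:thm4.5}. It then suffices to control $\frac{1}{N}\sum_{k=1}^N |\sigma^{-1}(Y_{k\gamma}^{\theta_1,\gamma}-Y_{k\gamma}^{\theta_2,\gamma})|^2$, after which dividing by $|\sigma^{-1}|$ and taking the supremum over $\Sigma$ yields the claim for $Y$ itself with $\mathbf{C}_\gamma = |\sigma^{-1}|^{-1}\mathbf{C}_\gamma^{(0)}$.

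Next I would compare the rescaled Euler-scheme difference $R_k := \sigma^{-1}(Y_{k\gamma}^{\theta_1,\gamma}-Y_{k\gamma}^{\theta_2,\gamma})$ with the difference of the stationary fOU Euler schemes $S_k := \bar{U}_{k\gamma}^{(1,\Id,H_1),\gamma}-\bar{U}_{k\gamma}^{(1,\Id,H_2),\gamma}$ (which exist for $\gamma\le\gamma_0$ by \cite[Proposition 3.4]{panloup2020general}), built from the same underlying Brownian motion. Both sequences satisfy a one-step recursion driven by the \emph{same} noise increment $(B_{(k+1)\gamma}^{H_1}-B_{k\gamma}^{H_1})-(B_{(k+1)\gamma}^{H_2}-B_{k\gamma}^{H_2})$, so using the discrete contraction estimate \cite[Equation (4.2)]{panloup2020general} (with contraction factor $1-2\gamma\beta+\gamma^2K^2<1$ for $\gamma\le\gamma_0$) I expect a pathwise inequality, the discrete analogue of the displayed bound in the proof of Lemma~\ref{lem:thm4.5}. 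Since the two Euler schemes $Y^{\theta_1,\gamma},Y^{\theta_2,\gamma}$ start from the same initial condition we have $R_0=0$, so this reads
\begin{align*}
\frac{1}{N}\sum_{k=1}^N |R_k|^2 \leq C\,\big|\bar{U}_0^{(1,\Id,H_1),\gamma}-\bar{U}_0^{(1,\Id,H_2),\gamma}\big|^2 + C\,\frac{1}{N}\sum_{k=1}^N |S_k|^2,
\end{align*}
the geometric decay of the contraction factor being what absorbs the discrepancy between the two recursions uniformly in $N$.

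Finally I would bound the two resulting terms using the companion paper. The ergodic-mean term $\frac{1}{N}\sum_{k=1}^N |S_k|^2$ is precisely the quantity treated by \cite[Theorem 5.2]{HRarxiv} (the Euler-scheme version of the Hurst regularity of the stationary fOU), which furnishes a random variable with finite $p$-th moment and the bound of order $|H_1-H_2|^\varpi$; the initial-condition term is controlled by the same companion estimates, in the same spirit as the use of \cite[Proposition 4.2]{HRarxiv} (with $t=t'=0$) in the proof of Lemma~\ref{lem:prop5.1}. Combining these and undoing the rescaling gives the statement. The main obstacle I anticipate is establishing the discrete comparison inequality rigorously and uniformly in $N$: unlike the continuous case, the contraction acts through the factor $(1-2\gamma\beta+\gamma^2K^2)^k$, so I must fix $\gamma_0$ (depending only on $\beta,K$) making this factor strictly less than $1$ and then run a discrete Grönwall summation keeping $C$ independent of $N$, $\xi$, $\sigma$ and $H_1,H_2$. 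Care is also needed to take the supremum over $\theta_1,\theta_2\in\Theta$ \emph{before} invoking the moment bounds, so that the constant $\mathbf{C}_\gamma$ inherited from \cite[Theorem 5.2]{HRarxiv} is genuinely uniform over the parameter set.
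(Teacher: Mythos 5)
Your architecture tracks the paper's quite closely --- the $\sigma^{-1}$-rescaling with $\tilde b_\xi(x)=\sigma^{-1}b_\xi(\sigma x)$ is literally the paper's first step, the paper likewise proceeds by a pathwise comparison with a stationary OU-type object followed by companion-paper regularity estimates, and your discrete coercivity/Gr\"onwall computation (the step you flag as the main obstacle) does go through for $\gamma_0$ small depending only on $\beta$ and $K$. The genuine gap lies in your choice of comparison process and the ingredient you then invoke. You compare with the difference of \emph{stationary Euler schemes} $\bar U^{(1,\Id,H_i),\gamma}$ of the fOU, so your proof needs (a) Hurst-regularity of the ergodic means $\frac{1}{N}\sum_{k}|\bar U^{(1,\Id,H_1),\gamma}_{k\gamma}-\bar U^{(1,\Id,H_2),\gamma}_{k\gamma}|^2$ and (b) Hurst-regularity of the discrete stationary initial value $|\bar U_0^{(1,\Id,H_1),\gamma}-\bar U_0^{(1,\Id,H_2),\gamma}|$. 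You attribute (a) to \cite[Theorem 5.2]{HRarxiv}, but nothing used in this paper supports that reading: every companion estimate invoked here (\cite[Propositions 4.2, 4.4, 5.1 and Theorem 4.5]{HRarxiv}) concerns the \emph{continuous-time} stationary fOU, possibly sampled on the grid as in Lemma \ref{lem:prop5.1}, and the fact that the paper's own proof goes through the comparison inequality ``\cite[Eq.~(5.5)]{HRarxiv}'' indicates that Theorem 5.2 there is the (non-$(\xi,\sigma)$-uniform) statement for Euler schemes of general coercive SDEs started from a common initial condition --- not a statement about stationary discrete OU schemes, whose initial conditions differ. Item (b) has the same defect: \cite[Proposition 4.2]{HRarxiv} controls the stationary law of the continuous fOU, whereas $\bar U^{(1,\Id,H),\gamma}_0$ is drawn from the invariant law of the Euler recursion, which is a different measure.

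The paper's proof is organized precisely to avoid these missing ingredients: following \cite[Eq.~(5.5)]{HRarxiv}, it compares the rescaled Euler-scheme difference with the \emph{continuous} stationary fOU processes $\bar U^{(1,\Id,H_i)}$, producing three error terms --- the grid-sampled ergodic mean $\frac{1}{N}\sum_{k}|\bar U^{(1,\Id,H_1)}_{k\gamma}-\bar U^{(1,\Id,H_2)}_{k\gamma}|^2$, the initial value $|\bar U_0^{(1,\Id,H_1)}-\bar U_0^{(1,\Id,H_2)}|^2$, and the continuous ergodic mean $\frac{1}{N\gamma}\int_0^{N\gamma}|U_s^{(1,\Id,H_1)}-U_s^{(1,\Id,H_2)}|^2\,ds$ --- which are exactly covered by Lemma \ref{lem:prop5.1}, \cite[Proposition 4.2]{HRarxiv} and \cite[Theorem 4.5]{HRarxiv}, respectively. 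To repair your argument you could: (i) switch the comparison process to the continuous stationary fOU, which essentially reduces to the paper's proof; (ii) drop stationarity and compare with OU Euler schemes started from the common initial condition, so that $R_0=S_0$ removes the initial term and the remaining term is an ergodic mean of same-start Euler-scheme differences, to which a Theorem-5.2-type result could plausibly apply; or (iii) prove the Hurst-regularity of the stationary discrete OU scheme directly from its representation $\bar U^{(1,\Id,H),\gamma}_{k\gamma}=\sum_{j<k}(1-\gamma)^{k-1-j}(B^H_{(j+1)\gamma}-B^H_{j\gamma})$, but that amounts to redoing a discrete analogue of the companion paper's Gaussian estimates and is substantial additional work.
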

\begin{proof}
For any $\theta \in \Theta$, the process $\sigma^{-1} Y_\cdot^{\theta,\gamma}$ is solution to the SDE
\begin{align*}
\sigma^{-1} Y_t^{\theta, \gamma} = \sigma^{-1} Y_0 + \int_0^t \tilde{b}_\xi (\sigma^{-1} Y_{s_\gamma}^{\theta,\gamma}) ds + B_t^H ,
\end{align*}
with $\tilde{b}_\xi (x) = \sigma^{-1} b_\xi (\sigma \cdot)$. We have $\tilde{b}_{\xi} \in \mathcal{C}^{1,1}(\mathbb{R}^d \times \Xi, \mathbb{R}^d)$ and since $\sigma$ lives in the compact set $\Sigma$, one can check that $\tilde{b}_{\xi}$ still satisfies \eqref{eq:drift-coerciv} and \eqref{eq:drift-growth}. As in the proof of \cite[Eq. (5.5)]{HRarxiv}, a comparison with the stationary fractional OU process $\bar{U}$ gives
\begin{align*}
& \frac{1}{N} \sum_{k=0}^N\left|\sigma^{-1} (Y_{k \gamma}^{\theta_1, \gamma}-Y_{k \gamma}^{\theta_2,\gamma}) \right|^2 \\ & \leq C\Bigg(\frac{1}{N} \sum_{k=0}^N\left|\bar{U}_{j \gamma}^{(1,\Id,H_1)}-\bar{U}_{j \gamma}^{(1,\Id,H_2)} \right|^2 +\left|\bar{U}_0^{(1,\Id,H_1)}-\bar{U}_0^{(1,\Id,H_2)} \right|^2 \\ & \quad +\frac{1}{N \gamma} \int_0^{N \gamma}\left|U_s^{(1,\Id,H_1)}-U_s^{(1,\Id,H_2)}\right|^2 d s\Bigg).
\end{align*}
The regularity of the second term in the right-hand side is given by \cite[Proposition 4.2]{HRarxiv} and the regularity of the third term is given by \cite[Theorem 4.5]{HRarxiv}. To bound the first term, we apply Lemma \ref{lem:prop5.1}. To conclude the proof, we notice that $|\sigma^{-1} (Y_{k \gamma}^{\theta_1,\gamma}-Y_{k \gamma}^{\theta_2,\gamma}) | \ge | \sigma^{-1} | | (Y_{k \gamma}^{\theta_1}-Y_{k \gamma}^{\theta_2} ) |$, divide by $| \sigma^{-1}|$ and take the supremum over $\Sigma$.
\end{proof}

\section{Continuity and Tightness results} \label{appendix-L}
In Proposition \ref{prop:uniform-Lq-bounds} and Proposition \ref{prop:uniform-Lq-bounds-L}, we prove that the solutions $Y^\theta$ and $Y^{\theta,\gamma}$ to \eqref{eq:fsde} and \eqref{eq:euler-scheme} and their ergodic means have finite moments uniformly in time and $\theta$. Finally, in Proposition \ref{prop:regularity-result}, we state a result on the regularity of the ergodic means in $\theta$.
\begin{prop}\label{prop:uniform-Lq-bounds}
Assume \ref{asmp:compact} and \ref{asmp:drift}. Let $Y^{\theta}$ be the unique solution of \eqref{eq:fsde}. Let $p > 1$. Then the following inequalities hold true:
\begin{itemize}
\item[(i)] $\displaystyle\sup_{t \ge 0} \sup_{\theta \in \Theta} 	\mathbb{E}  |Y_t^{\theta}|^p  < \infty$. 
\item[(ii)] $\displaystyle\mathbb{E} \left( \sup_{t \ge 0} \sup_{\theta \in \Theta}  \frac{1}{t} \int_0^t |Y_s^\theta |^2 ds \right)^{p} < \infty $. 
\item[(iii)] $\displaystyle\mathbb{E} \left( \sup_{n\in \N^*} \sup_{\theta \in \Theta}  \frac{1}{n} \sum_{k=0}^{n-1} |Y_{k h}^\theta |^2 \right)^{p} < \infty $. 
\end{itemize}
\end{prop}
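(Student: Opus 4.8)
The naive idea of subtracting the noise, i.e. working with $V_t=Y^{\theta}_t-\sigma B_t$, fails here: since $\mathbb{E}|\sigma B_t|^p$ grows like $t^{pH}$, the quantity $V_t$ is \emph{not} bounded uniformly in time, and the ergodic cancellation that keeps $Y^{\theta}$ bounded is lost. Instead the plan is to run the dissipativity estimate \emph{window by window}. Fix $T_0>0$ with $\rho:=2e^{-\beta T_0}<1$ (possible by \ref{asmp:drift}(i)), and on each window $[kT_0,(k+1)T_0]$ compare $Y^{\theta}$ with the \emph{local} noise increment $w_t:=B_t-B_{kT_0}$, which has moments uniformly bounded in $k$ by stationarity of fBm increments. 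Writing $\widetilde V_t=Y^\theta_t-\sigma w_t$ on this window (so $\widetilde V_{kT_0}=Y^\theta_{kT_0}$) and using the coercivity of \ref{asmp:drift} with pivot $\sigma w_t$, one gets $\frac{d}{dt}|\widetilde V_t|^2\le -\beta|\widetilde V_t|^2+\beta^{-1}|b_\xi(\sigma w_t)|^2$, and since $|b_\xi(\sigma w_t)|\le K|\sigma|\,|w_t|+\sup_{\xi\in\Xi}|b_\xi(0)|$ with $|\sigma|\le C_\Sigma$, a pathwise Grönwall argument yields, at $t=(k+1)T_0$,
\begin{align*}
|Y^\theta_{(k+1)T_0}|^2 \le \rho\,|Y^\theta_{kT_0}|^2 + R_k^\theta, \qquad R_k^\theta \le C\Big(1+\sup_{s\in[kT_0,(k+1)T_0]}|B_s-B_{kT_0}|^2\Big).
\end{align*}

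\textbf{Part (i).} Setting $\Phi^\theta_k:=|Y^\theta_{kT_0}|^2$ and $\mathcal R_k:=\sup_{\theta\in\Theta}R^\theta_k\le C(1+\mathcal G_k)$ with $\mathcal G_k:=\sup_{H\in\mathcal H}\sup_{s\in[kT_0,(k+1)T_0]}|B^H_s-B^H_{kT_0}|^2$, iterating the recursion gives $\sup_\theta\Phi^\theta_k\le \rho^k|Y_0|^2+\sum_{j=0}^{k-1}\rho^{k-1-j}\mathcal R_j$. The random variable $\mathcal G_k$ has the same law for every $k$ (stationarity of increments in the Mandelbrot--Van Ness representation \eqref{eq:MVN}), and $\|\mathcal G_0\|_{L^{p}}<\infty$ uniformly over $\mathcal H$ by the joint $(s,H)$-regularity of the fBm family from \cite{HRarxiv} (the same moment input used in Lemma \ref{prop:continuity}). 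Taking $L^{p/2}$ norms, Minkowski's inequality and $\sum_j\rho^{k-1-j}\le(1-\rho)^{-1}$ give $\sup_k\mathbb{E}|Y^\theta_{kT_0}|^p\le C$ uniformly in $\theta$; a one-step estimate inside each window then extends the bound from the grid $\{kT_0\}$ to all $t\ge0$, proving (i).

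\textbf{Parts (ii) and (iii).} Here the extra difficulty is that the supremum over $t$ (resp. $n$) and $\theta$ must be placed \emph{inside} the expectation, so pointwise moment bounds no longer suffice; we need to control the maximal function of the ergodic averages. Working with $\Phi_s:=\sup_\theta|Y^\theta_s|^2$ and using $\sup_\theta\frac1t\int_0^t|Y^\theta_s|^2\,ds\le\frac1t\int_0^t\Phi_s\,ds$, the window estimate gives $\int_{kT_0}^{(k+1)T_0}\Phi_s\,ds\le C(\sup_\theta\Phi^\theta_k+1+\mathcal G_k)$. Summing, applying the iterated bound on $\sup_\theta\Phi^\theta_k$ and swapping the order of summation, one obtains for $t\ge T_0$ a bound of the form
\begin{align*}
\sup_{t\ge T_0}\ \frac1t\int_0^t\Phi_s\,ds \ \le\ \frac{C\,|Y_0|^2}{T_0}+\frac{C}{1-\rho}\,\sup_{M\ge1}\frac1M\sum_{j=0}^{M-1}(1+\mathcal G_j),
\end{align*}
while the range $t\le T_0$ is absorbed into $\sup_{s\le T_0}\Phi_s$, which has finite moments by (i) and the window estimate. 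Since the shift by $T_0$ on the underlying Brownian motion $W$ is measure-preserving and $(\mathcal G_j)_j$ is a fixed measurable functional of the shifted increment process, $(\mathcal G_j)_j$ is stationary; the Wiener maximal (dominated ergodic) inequality then yields $\mathbb{E}\big(\sup_M\frac1M\sum_{j<M}\mathcal G_j\big)^p\le\big(\tfrac{p}{p-1}\big)^p\mathbb{E}\,\mathcal G_0^{\,p}<\infty$, which is exactly where the hypothesis $p>1$ is used. This proves (ii). Statement (iii) follows by the same scheme, grouping the $h$-spaced points inside each macro-window $[mT_0,(m+1)T_0]$ (bounding each $\Phi_{kh}$ by the window supremum) and invoking the discrete version of Wiener's maximal inequality.

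\textbf{Main obstacle.} The routine part is the windowwise dissipativity estimate; the genuinely delicate steps are (a) recognising that the global subtraction of $\sigma B$ must be replaced by the local, window-based comparison to recover uniformity in time, and (b) upgrading the pointwise moment bound of (i) to an $L^p$ bound on the \emph{supremum} of the ergodic averages, which requires the maximal ergodic theorem together with the verification that the noise functionals $\mathcal G_k$ form a stationary sequence and satisfy a moment bound that is \emph{uniform over the compact parameter set}, in particular uniform over $H\in\mathcal H$ (the point where the companion-paper regularity results \cite{HRarxiv} enter).
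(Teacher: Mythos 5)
Your proof is correct, but it takes a genuinely different route from the paper's. The paper proves $(i)$ by comparing $Y^\theta$ with the \emph{stationary fractional Ornstein--Uhlenbeck process} via the contraction estimate of \cite{Hairer}, and proves $(ii)$--$(iii)$ by dominating $\sup_\theta\frac1t\int_0^t|Y^\theta_s|^2\,ds$ by ergodic means of the fOU family $U^{(1,\Id,H)}$, splitting off the Markovian reference process $U^{(1,\Id,1/2)}$, and controlling the supremum over $H\in\mathcal H$ through the H\"older-in-$H$ regularity of \emph{ergodic means} of fOU processes (Lemmas \ref{lem:thm4.5} and \ref{lem:prop5.1}, imported from the companion paper \cite{HRarxiv}); ergodicity of the $H=\frac12$ OU process handles the remaining term. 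You instead run a windowed dissipativity recursion $|Y^\theta_{(k+1)T_0}|^2\le\rho|Y^\theta_{kT_0}|^2+C(1+\mathcal G_k)$, which reduces everything to the noise functionals $\mathcal G_k=\sup_{H\in\mathcal H}\sup_{s\in[kT_0,(k+1)T_0]}|B^H_s-B^H_{kT_0}|^2$, and you then get the $L^p$ bound on the supremum of the ergodic averages from Wiener's dominated ergodic theorem. Both routes must at some point tame the supremum over the compact $\mathcal H$, but your input is considerably lighter: you only need $\mathbb E\bigl[\sup_{H\in\mathcal H}\sup_{s\le T_0}|B^H_s|^{2p}\bigr]<\infty$ on one fixed finite window (joint $(s,H)$-regularity of the field, available from \cite{HRarxiv}), whereas the paper's argument consumes the long-time Hurst regularity of ergodic means, which is the heavy machinery of the companion paper. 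The trade-offs in your argument are (a) the maximal ergodic theorem, which is exactly where your hypothesis $p>1$ enters (in the paper it enters via the finite $p$-th moments of the random constants $\mathbf C$, $\mathbf C_\gamma$), and (b) the stationarity of $(\mathcal G_j)_j$, which you justify correctly: by the Mandelbrot--Van Ness representation \eqref{eq:MVN}, the shift by $T_0$ acts measure-preservingly on the underlying Wiener process and \emph{simultaneously} on the whole family $\{B^H\}_{H\in\mathcal H}$, so the functionals $\mathcal G_j$ are identically distributed and form a stationary sequence (independence is neither true nor needed). As a by-product your scheme yields the slightly stronger pointwise bound $\sup_{t\ge0}\mathbb E\sup_{\theta\in\Theta}|Y^\theta_t|^p<\infty$ in $(i)$.
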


\begin{proof}
Throughout the proof, $C$ will denote a constant that  do not depend on $\theta$ or $t$ and that may change from line to line. Observe that when the supremum is taken only over $\xi$, the proof is already done in \cite[Proposition A.1]{panloup2020general}. The proofs of all three items are based on a comparison with fractional OU processes \eqref{eq:fO-U}. 

For the proof of $(i)$, by \cite[p.725]{Hairer}, a comparison with the stationary fractional OU process $\bar{U}^{(1,  \sigma,  H)} $ yields that there exist constants $c_1,c_2>0$ independent of $\xi$ such that,
\begin{align*}
| Y_t - \bar{U}_t^{(1,  \sigma,  H)}|^p \leq e^{-2c_1 t} | Y_0 |^p + c_2 \int_0^t e^{-2 c_2 (t-s)} (1+ | \bar{U}_s^{(1,  \sigma,  H)} |^p) ds .
\end{align*}
Moreover, since $U^{(1,  \sigma,  H)}$ is a Gaussian process, for any $t \ge 1$, we have $\mathbb{E} | \bar{U}_t^{(1,  \sigma,  H)} |^p  \lesssim  ( \mathbb{E} | \bar{U}_t^{(1,\sigma,H)}  |^{2} )^{p/2}$. By \eqref{eq:fOUinvariant}, we know that 
$\mathbb{E} | \bar{U}_t^{(1,  \sigma,  H)} |^{2}  = \sigma^2 H \Gamma(2H)$. 
Therefore
\begin{align*}
\sup_{t \ge 0} \sup_{\theta \in \Theta} \mathbb{E} |Y_t^{\theta}|^p \leq C (1 + \sup_{t \ge 0} \sup_{\theta \in \Theta} \mathbb{E}   |\bar{U}_t^{(1,\sigma,H)}|^p ) < \infty .
\end{align*}   
For the proof of $(ii)$, we follow the steps of the proof of Proposition A.1 in \cite{panloup2020general} (see equation (A.6) and what follows), to get that for all $t >0$,
\begin{align*}
\sup_{\theta \in \Theta}  \frac{1}{t} \int_0^t |Y_s^\theta |^2 ds & \leq C \sup_{\theta \in \Theta} \frac{1}{t} \int_0^t | U_s^{\theta} |^2 d s  \leq C \sup_{\theta= (1,  \sigma,  H) \in \Theta} \frac{1}{t} \int_0^t  | \sigma | | U_s^{(1, \Id,  H)} |^2 d s  .
\end{align*}
It follows that
\begin{align*}
 \sup_{\theta\in \Theta} \frac{1}{t} \int_0^t |Y_s^\theta |^2 ds &   \leq C\sup_{H \in \mathcal{H}}   \frac{1}{t} \int_0^t  | U_s^{(1, \Id,  H)} |^2 d s \\
& \leq C \Biggl( \sup_{H \in \mathcal{H}} \frac{1}{t } \int_0^t | U_s^{(1, \Id,  H)} - U_s^{(1,\Id,1/2)}|^2 d s + \frac{1}{t } \int_0^t | U_s^{(1,\Id,1/2)}|^2 d s \Biggr)   .
\end{align*}
Moreover, by Lemma \ref{lem:thm4.5} applied to $Y^\theta \equiv U^{(1, \Id,  H)}$ we have that for any $\varpi \in (0, 1)$, there exists a random variable $\mathbf{C}$ with a finite moment of order $p$ such that for any $t \ge 1$,
\begin{align*}
\frac{1}{t} \int_0^t \sup_{\theta\in \Theta} |Y_s^\theta |^2 ds 
& \leq C \Biggl( \mathbf{C} \sup_{H \in \mathcal{H}} | H-\frac{1}{2}|^{\varpi} + \frac{1}{t} \int_0^t | U_s^{(1,\Id,1/2)}|^2 d s  \Biggr) .
\end{align*}
The ergodicity of $U^{(1,\Id,1/2)}$ implies that $ \frac{1}{t } \int_0^t | U_s^{(1,\Id,1/2)}|^2 d s$ converges as $t\to \infty$. It follows that
\begin{align*}
\mathbb{E} \left( \sup_{t >0} \sup_{\theta\in \Theta} \frac{1}{t} \int_0^t |Y_s^\theta |^2 ds \right)^{p} < \infty  .
\end{align*}
The proof of $(iii)$ can be done in the exact same way by transcribing all the integrals to discrete sums and using Lemma \ref{lem:prop5.1}.
\end{proof}

\begin{prop}\label{prop:uniform-Lq-bounds-L}
Assume \ref{asmp:compact} and \ref{asmp:drift}. Let $Y_\cdot^{\theta,\gamma}$ be the Euler scheme \eqref{eq:euler-scheme}. Then there exists $\gamma_0>0$ such that for any $p> 1$, there is
\begin{itemize}
\item[(i)] $\displaystyle \sup_{\theta \in \Theta, \gamma \in (0, \gamma_0)} \limsup_{N \rightarrow \infty}  \mathbb{E} \left|Y^{\theta,\gamma}_{N \gamma} \right|^p  < \infty$.
\item[(ii)] For $\gamma \in (0,\gamma_0]$,  $\displaystyle\mathbb{E} \left( \sup_{\theta \in \Theta} \sup_{N \ge 1}  \frac{1}{N} \sum_{k=0}^{N-1} | Y^{\theta,\gamma}_{k \gamma}|^2 \right)^{p} < \infty $.
\end{itemize}
\end{prop}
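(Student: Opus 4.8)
The plan is to prove Proposition~\ref{prop:uniform-Lq-bounds-L}, which asserts uniform-in-time (and uniform-in-$\theta,\gamma$) $L^p$-bounds on the Euler scheme and its discrete ergodic means. As in the continuous-time analogue (Proposition~\ref{prop:uniform-Lq-bounds}), the guiding principle is a \emph{comparison with a fractional Ornstein--Uhlenbeck Euler scheme}. Concretely, I would freeze the diffusion by working with $\sigma^{-1}Y^{\theta,\gamma}_\cdot$, which (as in the proof of Lemma~\ref{lem:thm5.2}) solves an Euler scheme with drift $\tilde b_\xi(x)=\sigma^{-1}b_\xi(\sigma x)$ still satisfying \ref{asmp:drift} uniformly over $\Sigma$, and compare it against the stationary discrete fOU scheme $\bar U^{(1,\Id,H),\gamma}$ driven by the same noise. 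The coercivity $\langle b_\xi(x)-b_\xi(y),x-y\rangle\le -\beta|x-y|^2$ together with the Lipschitz bound forces a one-step contraction of the form $|\bar Y^{\theta,\gamma}_{(k+1)\gamma}-Y^{\theta,\gamma}_{(k+1)\gamma}|^2\le(1-2\gamma\beta+\gamma^2K^2)|\bar Y^{\theta,\gamma}_{k\gamma}-Y^{\theta,\gamma}_{k\gamma}|^2$; for $\gamma$ small enough the factor lies in $(0,1)$, which is precisely where $\gamma_0$ enters. This is \cite[Eq.~(4.2)]{panloup2020general}, already invoked in \eqref{eq:bound-1bis}, so it can be cited rather than redone.

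For part~$(i)$ I would write $\EE|Y^{\theta,\gamma}_{N\gamma}|^p\le C(\EE|Y^{\theta,\gamma}_{N\gamma}-\bar Y^{\theta,\gamma}_{N\gamma}|^p+\EE|\bar Y^{\theta,\gamma}_{N\gamma}|^p)$. The first term vanishes as $N\to\infty$ by the geometric contraction above (after passing from $L^2$ to $L^p$ via Gaussianity of the increments, exactly as in \eqref{eq:expBoundYYbar}). The second term is the $p$-th moment of the \emph{stationary} discrete fOU scheme, which by stationarity equals $\EE|\bar Y^{\theta,\gamma}_0|^p$ and is bounded uniformly in $\theta,\gamma$ because its variance converges to the continuous stationary variance $\sigma^2H\Gamma(2H)\xi^{-2H}$ of \eqref{eq:fOUinvariant} as $\gamma\to0$ and stays bounded over the compact $\Theta$; since it is Gaussian, all $L^p$-norms are controlled by the $L^2$-norm. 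Taking the $\limsup$ and then the supremum over $\theta\in\Theta$, $\gamma\in(0,\gamma_0)$ closes $(i)$.

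For part~$(ii)$, which requires the \emph{pathwise} supremum over $N$ and $\theta$ to have finite $p$-th moment, I would mirror the continuous proof of Proposition~\ref{prop:uniform-Lq-bounds}$(ii)$–$(iii)$ but in discrete time. The comparison yields
\begin{align*}
\sup_{\theta\in\Theta}\frac1N\sum_{k=0}^{N-1}|Y^{\theta,\gamma}_{k\gamma}|^2\le C\sup_{H\in\mathcal H}\frac1N\sum_{k=0}^{N-1}|\bar U^{(1,\Id,H),\gamma}_{k\gamma}|^2,
\end{align*}
and I would insert the pivot $H=\tfrac12$, splitting the right-hand side into a regularity-in-$H$ term and a Markovian ergodic term. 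The $H$-oscillation term $\sup_{H}\frac1N\sum_{k}|\bar U^{(1,\Id,H),\gamma}_{k\gamma}-\bar U^{(1,\Id,1/2),\gamma}_{k\gamma}|^2$ is bounded by $\mathbf C_\gamma\sup_H|H-\tfrac12|^\varpi$ using Lemma~\ref{lem:prop5.1}, whose random constant has finite $p$-th moment; the residual term $\frac1N\sum_k|\bar U^{(1,\Id,1/2),\gamma}_{k\gamma}|^2$ is an ergodic average of a geometrically ergodic (discretised OU) process and hence has a finite-moment supremum over $N$.

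The main obstacle, as in the continuous case, is handling the \emph{supremum over $H$ inside} the ergodic average: one cannot simply bound each $H$ separately and take a sup, because the supremum over an uncountable family need not preserve finiteness of moments. This is exactly why Lemma~\ref{lem:prop5.1} (the uniform-in-$H$ Hölder regularity of the discrete stationary fOU ergodic mean, imported from \cite{HRarxiv}) is the crucial ingredient — it furnishes a single random variable $\mathbf C_\gamma$ with finite $p$-th moment dominating the $H$-oscillation uniformly over the whole sum. A secondary technical point is ensuring all comparison constants are genuinely independent of $\xi$ and $\sigma$ (not just of $t$ or $N$), which follows from the $\sigma$-rescaling trick and from the fact that the coercivity constants $\beta,K$ in \ref{asmp:drift} are uniform over $\Xi$; once these uniformities are in place, taking the supremum over the compact $\Theta$ is harmless.
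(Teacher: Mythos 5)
Your overall architecture (rescale by $\sigma^{-1}$, compare with an Ornstein--Uhlenbeck-type scheme, pivot at $H=\tfrac12$, control the $H$-oscillation by an imported regularity lemma, finish with ergodicity of the Markovian case) is indeed the paper's, but two of your key steps fail as written, and both failures sit exactly where the real work is. In part $(i)$ you decompose $\EE|Y^{\theta,\gamma}_{N\gamma}|^p$ through the stationary solution $\bar Y^{\theta,\gamma}$ of the \emph{same} Euler recursion and then claim that $\bar Y^{\theta,\gamma}_0$ is Gaussian with variance close to $\sigma^2 H\Gamma(2H)\xi^{-2H}$. That is false for a general coercive drift: \eqref{eq:fOUinvariant} is the stationary law of the \emph{linear} equation $b_\xi(y)=-\xi y$; for nonlinear $b_\xi$ satisfying \ref{asmp:drift} the stationary Euler scheme is neither Gaussian nor explicit, so its $L^p$-norms cannot be read off its variance. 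Worse, bounding $\sup_{\theta,\gamma}\EE|\bar Y^{\theta,\gamma}_0|^p$ is (via your own contraction) equivalent to statement $(i)$ itself, so the argument is circular. The missing ingredient is a \emph{different-drift} comparison, as in \cite[Proposition A.4]{panloup2020general} (the discrete analogue of the Hairer estimate used in Proposition \ref{prop:uniform-Lq-bounds}$(i)$): one compares $Y^{\theta,\gamma}$ with the discrete OU scheme, whose law \emph{is} Gaussian and explicit, applying coercivity to $b_\xi(Y)-b_\xi(\bar U)$ and treating the mismatch $b_\xi(\bar U)+\bar U$ as a source term. The contraction \eqref{eq:bound-1bis} you invoke applies coercivity to $b_\xi(x)-b_\xi(y)$ and hence only compares two solutions of the \emph{same} recursion; it cannot produce any comparison with an OU scheme, which is also why the first display of your part $(ii)$ is unjustified.

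In part $(ii)$ you also cite the wrong key lemma. Lemma \ref{lem:prop5.1} controls the $H$-oscillation of the \emph{continuous} stationary fOU sampled at discretisation times, i.e.\ $\bar U^{(1,\Id,H)}_{s_\gamma}$, whereas your oscillation term involves the stationary OU \emph{Euler scheme} $\bar U^{(1,\Id,H),\gamma}$; these are different processes, and the lemma does not apply to the latter. The paper's proof uses Lemma \ref{lem:thm5.2} instead, namely the $H$-regularity of ergodic means of Euler schemes \eqref{eq:euler-scheme}, applied with $b_\xi(y)=-y$. If, alternatively, you intend $\bar U^{(1,\Id,H),\gamma}_{k\gamma}$ to denote the sampled continuous process (so that Lemma \ref{lem:prop5.1} does apply), then the comparison inequality opening part $(ii)$ additionally requires an Euler-versus-continuous discretisation-error analysis (as in the proof of Lemma \ref{lem:thm5.2}), which generates extra terms you have not accounted for. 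Repairing the proof therefore means replacing your comparison mechanism by the discrete Hairer-type estimate of \cite[Proposition A.4]{panloup2020general} and replacing Lemma \ref{lem:prop5.1} by Lemma \ref{lem:thm5.2}; with those substitutions, the rest of your outline (pivot at $H=\tfrac12$, uniformity over the compact $\Theta$, ergodicity of the discretised Markovian OU) coincides with the paper's argument.
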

\begin{proof}
Note that the same results are proven in \cite[Proposition A.4]{panloup2020general} when $\Theta$ only represents the range of the parameter $\xi$. With this in mind, as in Proposition \ref{prop:uniform-Lq-bounds}, the proof of $(i)$ is based on comparisons with the discrete Ornstein-Uhlenbeck process, which has finite moments uniformly in $\theta$. The proof $(ii)$ is the same as the proof of $(ii)$ in Proposition \ref{prop:uniform-Lq-bounds} and is based on a comparison with the discrete OU process and Lemma \ref{lem:thm5.2}.
\end{proof}

\begin{prop}\label{prop:regularity-result}
Let the assumptions \ref{asmp:compact} and \ref{asmp:drift} hold. Assume also that the exponent $r$ in the sub-linear growth of $b_\xi$ in \eqref{eq:drift-growth} satisfies $r \leq 1$. Let $p \ge 1$ and $\varpi \in (0,1)$, then there exists a positive random variable $\mathbf{C}$ that has a finite moment of order $p$, such that almost surely for all $\theta_1, \theta_2 \in \Theta$ and for all $t \ge 1$,
\begin{align}\label{eq:continuityM-theta}
\frac{1}{t} \int_0^t | Y_s^{\theta_1} - Y_s^{\theta_2} |^2 ds  \leq \mathbf{C}  | \theta_1 - \theta_2 |^{\varpi}   ,
\end{align}
where $Y^{\theta_1}$ and $Y^{\theta_2}$ are solutions to \eqref{eq:fsde} with the same initial condition and driven by an fBm with the same underlying noise (see \eqref{eq:MVN}).
Furthermore, there exists $\gamma_0$ such that for any $\gamma \in (0,\gamma_0]$, there exists a positive random variable $\mathbf{C}_\gamma$ that has a finite moment of order $p$, such that almost surely, for any $\theta_1, \theta_2 \in \Theta$ and any $N \ge 1$,
\begin{align}\label{eq:continuityM-theta-discret}
\frac{1}{N} \sum_{k=0}^N | Y_{k \gamma}^{\theta_1,\gamma} -   Y_{k \gamma}^{\theta_2,\gamma}  |^2ds  \leq \mathbf{C}_\gamma  | \theta_1 - \theta_2 |^{\varpi}   .
\end{align}
\end{prop}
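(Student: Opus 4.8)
The plan is to exploit the product structure $\Theta = \Xi \times \Sigma \times \mathcal{H}$ and reduce the estimate to three one-parameter comparisons. For fixed $\omega$ and $t \ge 1$, the map $\theta \mapsto (\frac{1}{t}\int_0^t |Y_s^\theta|^2\,ds)^{1/2}$ is a seminorm, so for $\theta_1 = (\xi_1,\sigma_1,H_1)$ and $\theta_2=(\xi_2,\sigma_2,H_2)$ I would insert the pivots $\theta_{12}=(\xi_2,\sigma_1,H_1)$ and $\theta_{22}=(\xi_2,\sigma_2,H_1)$, apply the triangle inequality and then $(a+b+c)^2 \le 3(a^2+b^2+c^2)$ to bound $\frac{1}{t}\int_0^t|Y_s^{\theta_1}-Y_s^{\theta_2}|^2\,ds$ by three contributions in which only $\xi$, only $\sigma$, or only $H$ changes. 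Since all the processes are built from the same underlying Brownian motion $W$ through \eqref{eq:MVN}, they share the same driving fBm whenever $H$ is fixed, so these comparisons are genuinely pathwise. Finally, because $\varpi \le 1$ the map $x\mapsto x^\varpi$ is subadditive and each coordinate gap is at most $|\theta_1-\theta_2|$, so the three one-parameter bounds recombine into $\mathbf{C}\,|\theta_1-\theta_2|^\varpi$; taking $\mathbf{C}$ to be a constant multiple of the sum of the three random variables (each with a finite $p$-th moment, uniform in the parameters) gives the required integrability since $L^p$ is a vector space.

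For the $H$-direction (only $H_1 \ne H_2$) the bound is exactly Lemma~\ref{lem:thm4.5}, applied with $\xi=\xi_2$ and $\sigma=\sigma_2$. For the $\xi$-direction, the analogous uniform-in-time regularity of the ergodic mean in the drift parameter is provided by \cite{panloup2020general}: under \ref{asmp:drift} and $b\in\mathcal{C}^{1,1}(\mathbb{R}^d\times\Xi)$ one has $|b_{\xi_1}(x)-b_{\xi_2}(x)| \le C(1+|x|^r)|\xi_1-\xi_2|$, and the uniform moment bounds of Proposition~\ref{prop:uniform-Lq-bounds} absorb the polynomial factor (this is where $r \le 1$ enters).

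The delicate direction is $\sigma$, and I expect it to be the main obstacle. A direct coercivity-plus-Grönwall comparison of $Y^{(\xi,\sigma_1,H)}$ and $Y^{(\xi,\sigma_2,H)}$ does \emph{not} yield a time-uniform bound: the forcing term is $(\sigma_1-\sigma_2)B_t$ and, since $\frac{1}{t}\int_0^t|B_s|^2\,ds$ grows like $t^{2H}$, the resulting estimate diverges. The device I would use is the scaling $Y^{(\xi,\sigma,H)} = \sigma\,W^{(\sigma)}$, where $W^{(\sigma)} := \sigma^{-1}Y^{(\xi,\sigma,H)}$ solves \eqref{eq:fsde} driven by $B$ itself (no $\sigma$ in front) with the rescaled drift $\tilde b_\xi(\cdot)=\sigma^{-1}b_\xi(\sigma\,\cdot)$, which still satisfies \ref{asmp:drift}. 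Writing
\[
Y^{(\xi,\sigma_1,H)}_t - Y^{(\xi,\sigma_2,H)}_t = (\sigma_1-\sigma_2)\,W^{(\sigma_1)}_t + \sigma_2\big(W^{(\sigma_1)}_t - W^{(\sigma_2)}_t\big),
\]
the first term contributes $(\sigma_1-\sigma_2)^2\,\frac{1}{t}\int_0^t|W^{(\sigma_1)}_s|^2\,ds$, whose ergodic mean is bounded uniformly in $t$ by Proposition~\ref{prop:uniform-Lq-bounds}$(ii)$ applied to $W^{(\sigma_1)}$; the growing fBm has been absorbed into this stationary-type process. The second term compares two solutions driven by the \emph{same} noise $B$ whose drifts differ by $\tilde b_\xi^{(\sigma_1)}-\tilde b_\xi^{(\sigma_2)}$, a perturbation of size $C|\sigma_1-\sigma_2|(1+|x|)$ under \ref{asmp:drift} and $r\le 1$, and whose initial conditions differ by $(\sigma_1^{-1}-\sigma_2^{-1})Y_0=O(|\sigma_1-\sigma_2|)$ with exponentially decaying effect. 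This is precisely a drift perturbation of the type treated in the $\xi$-direction, so the same ergodic-mean regularity from \cite{panloup2020general,HRarxiv} applies; controlling this term uniformly in time is the point requiring the most care.

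The discrete statement \eqref{eq:continuityM-theta-discret} follows the same three-step scheme applied to the Euler scheme \eqref{eq:euler-scheme}: the $H$-direction is Lemma~\ref{lem:thm5.2}, the $\xi$-direction is the discrete analog from \cite{panloup2020general}, and the $\sigma$-direction uses that $\sigma^{-1}Y^{\theta,\gamma}$ is again an Euler scheme for $\tilde b_\xi$, so the scaling decomposition above carries over verbatim with Proposition~\ref{prop:uniform-Lq-bounds-L} replacing Proposition~\ref{prop:uniform-Lq-bounds} for the discrete ergodic moments. One only needs $\gamma \le \gamma_0$ small enough, uniformly in $\theta$, for the discrete stationary solution and the discrete contractivity estimate to be available, exactly as in Section~\ref{sec:practical}.
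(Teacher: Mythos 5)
Your proposal is correct and, for most of the statement, coincides with the paper's own proof: the same pivot decomposition into the $\xi$-, $H$- and $\sigma$-directions, Lemma~\ref{lem:thm4.5} for the $H$-case, the bound \cite[Eq.~(5.32)]{panloup2020general} combined with $r\le 1$ and Proposition~\ref{prop:uniform-Lq-bounds}$(ii)$ for the $\xi$-case, and Lemma~\ref{lem:thm5.2} together with Proposition~\ref{prop:uniform-Lq-bounds-L} for the discrete bound \eqref{eq:continuityM-theta-discret}. The genuine difference is the $\sigma$-direction, which you correctly single out as the delicate step. You rescale, writing $Y^{\theta_i}=\sigma_i W^{(\sigma_i)}$, handle $(\sigma_1-\sigma_2)W^{(\sigma_1)}$ by ergodic moment bounds, and treat $\sigma_2(W^{(\sigma_1)}-W^{(\sigma_2)})$ as a drift perturbation of the rescaled equation. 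The paper instead compares $Y^{\theta_1}-Y^{\theta_2}$ directly with the difference $U^{(1,\sigma_1,H)}-U^{(1,\sigma_2,H)}$ of two fractional OU processes \eqref{eq:fO-U} started at $0$ with the same fBm: coercivity of $b_\xi$, Young's inequality, Gr\"onwall and Fubini reduce the ergodic mean of $|Y^{\theta_1}-Y^{\theta_2}|^2$ to that of $|U^{(1,\sigma_1,H)}-U^{(1,\sigma_2,H)}|^2$, and the exact linearity $U^{(1,\sigma,H)}=\sigma\, U^{(1,\Id,H)}$ converts this into $|\sigma_1-\sigma_2|^2$ times moments of the standard fOU. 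Both routes work, but they buy different things. Yours unifies the $\sigma$-case with the $\xi$-case machinery; the price is that you cannot literally cite the $\xi$-estimate of \cite{panloup2020general} (it is stated for the family $b_\xi$, so you must rerun the coercivity--Gr\"onwall--Fubini argument for the family $\sigma\mapsto\sigma^{-1}b_\xi(\sigma\,\cdot)$, even though the proof is verbatim), and, more substantively, your $\sigma$-case now rests on the claim that the rescaled drift retains the coercivity \eqref{eq:drift-coerciv}. For matrix-valued $\sigma$ this is not automatic: it amounts to $\langle b_\xi(u)-b_\xi(v),(\sigma\sigma^{T})^{-1}(u-v)\rangle\le-\beta'|u-v|^2$, which can fail when $\sigma\sigma^{T}$ is far from a multiple of the identity. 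The paper asserts this rescaling claim in its Appendix, and both proofs depend on it through Lemma~\ref{lem:thm4.5}, but the paper's $\sigma$-argument uses only the coercivity of $b_\xi$ itself and thus avoids adding a second reliance on it. Two small repairs to your write-up: the map $\theta\mapsto(\frac{1}{t}\int_0^t|Y_s^\theta|^2\,ds)^{1/2}$ is not a seminorm in $\theta$ --- what you actually use is the triangle inequality for the normalized $L^2([0,t];\mathbb{R}^d)$ norm applied to the three pivot differences, which is fine; and your $\xi$- and $\sigma$-bounds come naturally with exponent $2$ rather than $\varpi$, so the recombination into $|\theta_1-\theta_2|^{\varpi}$ needs the boundedness of $\Theta$ from \ref{asmp:compact} to write $|x|^2\le C|x|^{\varpi}$.
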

\begin{proof} In the proof, we denote by $C$ a constant independent of time and $\theta$ that may change from line to line. Similarly, $\mathbf{C}$ will denote a positive random variable that has a finite moment of order $p$, that does not depend on $\theta$ and may change from line to line.
Let us first focus on on the proof of \eqref{eq:continuityM-theta}. Up to introducing pivot terms, we can consider three different cases:
\begin{align*}
\text{(1)}~  \theta_1 & = (\xi_1, \sigma, H), \theta_2 = (\xi_2, \sigma, H) \\
 \text{(2)}~ \theta_1 &  = (\xi, \sigma, H_1), ~ \theta_2 = (\xi, \sigma, H_2)\\ 
 \text{(3)}~ \theta_1 & = (\xi, \sigma_1, H), ~ \theta_2 = (\xi, \sigma_2, H) .
\end{align*}

In the first case, we have by \cite[Eq. (5.32)]{panloup2020general} that
\begin{align*}
\frac{1}{t} \int_0^t | Y_s^{\theta_1} - Y_s^{\theta_2} |^2 ds \leq C | \xi_1- \xi_2 |^2 \left( 1+\sup_{\theta \in \Theta}  \frac{1}{t} \int_0^t | Y_s^\theta |^{2r} d s \right) ,
\end{align*}
where $r$ is the exponent in the sub-linear growth assumption on $b_\xi$. Since $r \leq 1$,
\begin{align}\label{eq:first-case}
\frac{1}{t} \int_0^t | Y_s^{\theta_1} - Y_s^{\theta_2} |^2 ds\leq C | \xi_1- \xi_2 |^2 \left( 1+ \sup_{\theta \in \Theta}  \frac{1}{t} \int_0^t | Y_s^\theta |^{2} d s \right) .
\end{align}
It follows from the uniform bound on the moments of $Y_t^\theta$ in Proposition \ref{prop:uniform-Lq-bounds}(ii) that there exists a random variable $\mathbf{C}$ with finite moment of order $p$ such that
\begin{align*}%
\frac{1}{t} \int_0^t | Y_s^{\theta_1} - Y_s^{\theta_2} |^2 ds\leq \mathbf{C} | \xi_1 - \xi_2 |^2 .
\end{align*}

The second case (2) is directly the result of Lemma \ref{lem:thm4.5}.

As for the third case (3), the idea is to compare the process $Y$ with the fractional OU processes $U^{(1,  \sigma_1,  H)}$ and $U^{(1,  \sigma_2,  H)}$ defined by \eqref{eq:fO-U} with the same initial condition and the same driving fBm. For $s \ge 1$, it comes
\begin{align*}
& \frac{\partial}{\partial s} | Y_s^{\theta_1}-Y_s^{\theta_2} - \left( U_s^{(1,  \sigma_1,  H)}-U_s^{(1,  \sigma_2,  H)} \right) |^2  \\ & \quad = 2 \langle  Y_s^{\theta_1}-Y_s^{\theta_2} - \left( U_s^{(1,  \sigma_1,  H)}-U_s^{(1,  \sigma_2,  H)} \right), b(Y_s^{\theta_1})-b(Y_s^{\theta_2}) + \left( U_s^{(1,  \sigma_1,  H)}- U_s^{(1,  \sigma_2,  H)} \right) \rangle \\
& \quad \leq - c_1 |Y_s^{\theta_1}-Y_s^{\theta_2} |^2 - c_2 |U_s^{(1,  \sigma_1,  H)}-U_s^{(1,  \sigma_2,  H)}|^2 + c_3 | Y_s^{\theta_1}-Y_s^{\theta_2}| | U_s^{(1,  \sigma_1,  H)}-U_s^{(1,  \sigma_2,  H)} |,
\end{align*}
where the last inequality follows from \ref{asmp:drift}. Next,  apply Young's inequality to get
\begin{align*}
& \frac{\partial}{\partial s} | Y_s^{\theta_1}-Y_s^{\theta_2} - \left( U_s^{(1,  \sigma_1,  H)}-U_s^{(1,  \sigma_2,  H)} \right) |^2 \\ & \quad \leq - c_1 |Y_s^{\theta_1}-Y_s^{\theta_2} |^2 - c_2 |U_s^{(1,  \sigma_1,  H)}-U_s^{(1,  \sigma_2,  H)}|^2 + c_3 | U_s^{(1,  \sigma_1,  H)}-U_s^{(1,  \sigma_2,  H)} |^2 \\
& \quad \leq -c_1  | Y_s^{\theta_1}-Y_s^{\theta_2} - \left( U_s^{(1,  \sigma_1,  H)}-U_s^{(1,  \sigma_2,  H)} \right) |^2 + c_2 | U_s^{(1,  \sigma_1,  H)}-U_s^{(1,  \sigma_2,  H)} |^2 .
\end{align*}
We can now apply Gr\"onwall's lemma to get
\begin{align*}
| Y_s^{\theta_1}-Y_s^{\theta_2} - \left( U_s^{(1,  \sigma_1,  H)}-U_s^{(1,  \sigma_2,  H)} \right) |^2 \leq C \int_0^s e^{-(s-u)} | U_u^{(1,  \sigma_1,  H)}-U_u^{(1,  \sigma_2,  H)} |^2 d  u .
\end{align*}
Jensen's inequality yields that
\begin{align*}
| Y_s^{\theta_1}-Y_s^{\theta_2} - \left( U_s^{(1,  \sigma_1,  H)}-U_s^{(1,  \sigma_2,  H)} \right) |^{2}  \leq C \int_0^s e^{-(s-u)} | U_u^{(1,  \sigma_1,  H)}-U_u^{(1,  \sigma_2,  H)} |^{2} d  u.
\end{align*}
Then, using Fubini's theorem, it comes that
\begin{align}
\frac{1}{t} \int_0^t | Y_s^{\theta_1}-Y_s^{\theta_2}  |^{2}  d s & \leq \frac{C}{t} \int_0^t | U_u^{(1,  \sigma_1,  H)}-U_u^{(1,  \sigma_2,  H)} |^{2} \int_u^t \mathds{1}_{[0,s]} e^{-(s-u)} d s d u \nonumber \\ & \quad +\frac{C}{t} \int_0^t | U_s^{(1,  \sigma_1,  H)}-U_s^{(1,  \sigma_2,  H)} |^2 d s  \nonumber \\
& \leq \frac{C}{t} \int_0^t | U_u^{(1,  \sigma_1,  H)}-U_u^{(1,  \sigma_2,  H)} |^{2} d u . \label{eq:third-case}
\end{align}
Now, observe that 
$U_s^{(1,  \sigma_1,  H)} - U_s^{(1,  \sigma_2,  H)} = (\sigma_1-\sigma_2) U_s^{(1, \Id, H)} $.
Since $U_s^{(1, \Id, H)}$ has finite moments uniformly in $\theta$ (recall \eqref{eq:fOUinvariant} and that $U^{(1, \Id, H)}$ is a Gaussian process), it follows that
\begin{align*}
\frac{1}{t} \int_0^t | Y_s^{\theta_1}-Y_s^{\theta_2}  |^{2}  d s  \leq \mathbf{C} | \sigma_1 - \sigma_2 |^2  ,
\end{align*}
where $\mathbf{C}$ has a finite moment of order  $p$. This concludes the proof of \eqref{eq:continuityM-theta}. 

\vspace{0.1cm}

The proof of \eqref{eq:continuityM-theta-discret} is obtained using discrete analogues of the 	 previous arguments. More precisely, in the first case (1), similarly to \eqref{eq:first-case}, we have from \cite[Proposition 3.8 (ii)]{panloup2020general} that
\begin{align*}
\frac{1}{N} \sum_{k=0}^N | Y_{k \gamma}^{\theta_1,\gamma} - Y_{k \gamma}^{\theta_2,\gamma} |^2 \leq C | \xi_1- \xi_2 |^2 \left( 1+ \sup_{\theta \in \Theta}  \frac{1}{N} \sum_{k=0}^{N} | Y_{k \gamma}^{\theta,\gamma} |^{2}  \right) .
\end{align*}
While the dependence of the RHS on $\sigma$ and $H$ in \cite{panloup2020general} is not explicit, one can show that the upper bound they obtain in the continuous setting (i.e. \cite[Eq. (5.32)]{panloup2020general}) still holds if the integrals are replaced by discrete sums. Then conclude using the uniform bound on the moments of $Y^{\theta,\gamma}$ in Proposition \ref{prop:uniform-Lq-bounds-L}(ii). 

The second case (2) is directly the result of Lemma \ref{lem:thm5.2}. In the third case (3), similarly to \eqref{eq:third-case}, via a comparison with the discrete fractional OU process $U^{\theta,\gamma}$ (that solves \eqref{eq:euler-scheme} with $b_\xi(\cdot)=-\xi \cdot$), one can show that
\begin{align*}
\frac{1}{N} \sum_{k=0}^N | Y_{k \gamma}^{\theta_1,\gamma}-Y_{k \gamma}^{\theta_2,\gamma}  |^{2}    & \leq \frac{C}{N} \sum_{k=0}^N| U_{k \gamma}^{(1,  \sigma_1,  H), \gamma}-U_{k \gamma}^{(1,  \sigma_2,  H), \gamma} |^{2} . 
\end{align*}
Then we use the linearity of $U^{(1,\sigma,H),\gamma}$ in $\sigma$ to conclude.
\end{proof}

\section{Proof of Proposition \ref{prop:dconv-euler}}\label{subsec:proofof4.1}
The proof follows the same steps as the proof of Lemma \ref{lem:contrast-conv}. Let $\theta = (\xi,\sigma,H) \in \Theta$. We will first prove that almost surely, the random measure $\frac{1}{t} \int_0^t \delta_{X^{\theta,\gamma}_s} ds$ converges in law to $\mu_{\theta}^\gamma$ as $t \rightarrow \infty$. This implies that $\frac{1}{t} \int_0^t \delta_{X^{\theta,\gamma}_s} ds$ converges to $\mu_{\theta}$ in the Prokhorov distance. To extend this result to distances $d$ in $\mathcal{D}_2$ (i.e dominated by the $2$-Wasserstein distance), we use the fact that the $2$-Wasserstein distance is dominated by the Prokorov distance $d_P$ as in \eqref{eq:boundProkhorov}:
\begin{align*}
d\left(\frac{1}{t} \int_0^t \delta_{X^{\theta,\gamma}_s} ds, \mu_{\theta}\right) & \leq C_p \sup_{t \ge 0} \left( \max\left(\frac{1}{t} \int_0^t |X^{\theta,\gamma}_s|^2 ds ,\, \mathbb{E} |\bar{X}^{\theta,\gamma}_t|^2\right) + 1 \right) \\ & \quad \times d_P\left(\frac{1}{t}\int_0^t \delta_{X^{\theta,\gamma}_s} ds, \mu_{\theta}\right). 
\end{align*}
By definition of the process $X^{\theta,\gamma}$, we have that
\begin{align*}%
\max \left( \frac{1}{t} \int_0^t |X^{\theta,\gamma}_s|^2 ds , \, \mathbb{E} |\bar{X}^{\theta,\gamma}_t|^2 \right) \leq C_{q} \sum_{i=0}^{q}  \max\left( \frac{1}{t} \int_0^t |Y^{\theta,\gamma}_{s+ih}|^2 ds , \, \mathbb{E} |\bar{Y}^{\theta,\gamma}_{s+ih}|^2 \right)  .
\end{align*}
Therefore, we conclude thanks to Proposition \ref{prop:uniform-Lq-bounds-L} that in the present case, the convergence in law is equivalent to the convergence for the $2$-Wasserstein distance. Similarly to Section \ref{subsec:contrast_conv}, we consider a family of probability measures on the set of c\`adl\`ag functions for which the identification of the limit will be easier, namely $\{ \pi_N = \frac{1}{N} \sum_{k=0}^{N-1} \delta_{X_{k \gamma+.}^{\theta, \gamma}}  \}_{N \ge 0}$. We first prove that the family is tight and then identify the limit as the stationary law of the augmented process $\bar{X}^{\theta,\gamma}$. 
Tightness in $D(\mathbb{R}_+, \mathbb{R}^d)$, the space of functions that are right-continuous and have limits from the left is equivalent to tightness in $D([0,T], \mathbb{R}^d)$ for every  $T>0$. Thus by \cite[Theorem 13.2]{Billingsley}, tightness is equivalent to the following two points that must hold for any $T > 0$:
\begin{enumerate}
\item[(i)]  $\displaystyle \lim_{a \rightarrow \infty} \limsup_{N \rightarrow \infty}  \pi_N \Big( x: \sup_{t \in [0,T]} | x_t | \ge a \Big) = 0.$

\item[(ii)] Denote $w_T^{\prime}(x, \delta)=\inf _{\left\{t_i\right\}}\left\{\max _{i \leq r} \sup _{s, t \in\left[t_i, t_{i+1}\right)}\left|x_t-x_s\right|\right\}$. Then for any $\eta>0$, almost surely,
\begin{align*}
& \limsup _{\delta \rightarrow 0} \limsup _{N \rightarrow+\infty} \pi_N \left( x: w_T^{\prime} (x, \delta ) \ge \eta \right) 
 = \limsup _{\delta \rightarrow 0} \limsup _{N \rightarrow+\infty} \frac{1}{N} \sum_{k=0}^{N-1} \mathds{1}_{\left\{w_T^{\prime}\left(X^{\theta,\gamma}_{k\gamma+\cdot}, \delta\right) \geq \eta\right\}}=0 ,
\end{align*}
where the infimum runs over finite sets $\left\{t_i\right\}_{i=1,\dots, r}, r\in \N^*$, satisfying
$$
0=t_0<t_1<\cdots<t_r=T \quad \text { and } \quad \inf _{i \leq r}\left(t_i-t_{i-1}\right) \geq \delta .
$$
\end{enumerate}
Since the process has only jumps at times $n \gamma$ with $n \in \mathbb{N}$,  $w_T^{\prime}\left(X^{\theta,\gamma}, \delta\right)=0$ when $\delta<\gamma$, which implies that the second condition (ii) holds. 

The first condition (i) is equivalent to tightness in the space of probability measures on $\R$ of the sequence $(\mu_T^{N})_{N\in \N^*}$ defined by 
\begin{equation*}
\mu_T^{N}=\frac{1}{N} \sum_{k=0}^{N-1} \delta_{\left\{\sup _{t \in[0, T]}\left|X_{k\gamma+t}^{\theta,\gamma} \right|\right\}}.
\end{equation*}
Recall that by definition of $X$ we have $| X^{\theta,\gamma}_{k \gamma} |^2 \leq C_q \sum_{i=0}^q | Y^{\theta,\gamma}_{k \gamma+ih} |^2 \leq C_q \sup_{i \in \llbracket 0,q \rrbracket} | Y^{\theta,\gamma}_{k \gamma+ih} |^2$. Hence, for $V(x)=|x|^2$ and
\begin{align*}
 \tilde{\mu}^N_{T} = \frac{1}{N} \sum_{k=0}^{N-1} \delta_{\left\{ \sup_{t \in [0,T+qh]} \left| Y^{\theta, \gamma}_{k \gamma + t} \right| \right\} },
\end{align*}
we deduce that
$\mu_T^{N}(V)  \leq C_q  \, \tilde{\mu}^N_{T}(V)$. 
From the last equation in the proof of \cite[Proposition 2]{cohen2011approximation}, one has $\sup_{N \ge 1} \tilde{\mu}^N_T (V) <+ \infty$ almost surely, which implies that $(\mu_T^{N})_{N \ge 1}$ is a.s. tight on $\mathbb{R}$ (see \emph{e.g.} \cite[Proposition 2.1.6]{Duflo}).

Now let $(t_n)_{n \ge 1}$ be an increasing sequence going to $+\infty$ and $\{\frac{1}{t_n}  \sum_{k=0}^{t_n-1}  \delta_{X^{\theta,\gamma}_{k \gamma+\cdot}}  \}_{n \ge 1}$ be a (pathwise) sequence with limiting distribution $\rho$. 
As in Appendix A.2 of \cite[Proposition 3.3]{panloup2020general}, we get that $\gamma$ is stationary.
Let us now prove that $\rho$ is the law of $\bar{X}^{\theta,\gamma}$.

A process ${x_t=(y_t,z^1_t,\dots,z^{q}_t)}$ has the law of $\bar{X}^{\theta,\gamma}$ if $x_t = x_{k \gamma}$ for $t \in [k \gamma, (k+1) \gamma]$, and
\begin{align*}
  &  y_\cdot  - y_0 - \int_0^{\cdot_\gamma} b_\xi(y_u) du ~\text{has the law of a $\sigma B_{\cdot_\gamma} $ where B has Hurst parameter $H$};\\
 &   z^i_\cdot  - \ell^i \left( \int_0^{\cdot_\gamma} b_\xi(y_u) du , \dots, \int_0^{(\cdot+ih)_\gamma} b_\xi(y_u) du \right) ~\text{has the law of}~ \sigma \ell^i(B_{\cdot_\gamma},\dots,B_{(\cdot+ih)_\gamma}) \\ & \quad \text{for all $i \in \llbracket 1,q \rrbracket$},
\end{align*}
where for all $t \ge 0$, $t_\gamma = \gamma \lfloor t/\gamma \rfloor$. Now one can proceed as in the end of Section \ref{subsec:contrast_conv} to deduce that $\rho$ is the law of $\bar{X}^{\theta,\gamma}$.

\end{appendix}


\end{document}